\newtheorem{fact}{Fact}[section]
\newtheorem{lemma}[fact]{Lemma}
\newtheorem{theorem}[fact]{Theorem}
\newtheorem{defi}[fact]{Definition}
\newtheorem{exa}[fact]{Example}
\newtheorem{rremark}[fact]{Remark}
\newtheorem{proposition}[fact]{Proposition}
\newtheorem{corollary}[fact]{Corollary}
\newenvironment{remark}{\begin{rremark} \rm}{\end{rremark}}
\newenvironment{definition}{\begin{defi} \rm}{\end{defi}}
\newenvironment{example}{\begin{exa} \rm}{\end{exa}}
\newtheorem{conjecture}[fact]{Conjecture}
\def\semmi#1{}
\newcommand{\smx}[4]{\bigl(\begin{smallmatrix}{#1}&{#2}\\{#3}&{#4}\end{smallmatrix}\bigr)}
\DeclareMathOperator{\ts}{Ts}
\DeclareMathOperator{\Tp}{Tp}
\DeclareMathOperator{\tp}{tp}
\DeclareMathOperator{\E}{\mathcal E}
\DeclareMathOperator{\C}{\mathbf C} \DeclareMathOperator{\Z}{\mathbf Z}
\DeclareMathOperator{\N}{\mathbf N}
\DeclareMathOperator{\K}{\mathcal K}
\DeclareMathOperator{\M}{\mathcal M}
\renewcommand{\P}{\mathbf P}
\DeclareMathOperator{\Hom}{Hom}
\def\iso{\cong}
\DeclareMathOperator{\gr}{Gr}       \DeclareMathOperator{\res}{Res}
\DeclareMathOperator{\codim}{codim}  \DeclareMathOperator{\sym}{Sym}
   \DeclareMathOperator{\pt}{pt}
 \DeclareMathOperator{\sing}{Sing}
\DeclareMathOperator{\GL}{GL}
\DeclareMathOperator{\Sym}{Sym}
\DeclareMathOperator\RES{\mathbf {RES}}
\DeclareMathOperator\dis{disc}
\DeclareMathOperator\depth{depth}
\DeclareMathOperator\Asym{ASym}
\begin{document}

\title{Thom series of contact singularities}

\author{L. M. Feh\'er}
\address{Department of Analysis, Eotvos University Budapest, Hungary}
\email{lfeher@renyi.hu}

\author{R. Rim\'anyi}
\address{Department of Mathematics, University of North Carolina at Chapel Hill, USA}
\email{rimanyi@email.unc.edu}

\dedicatory{Dedicated to Jim Damon}

\thanks{\noindent The first author was supported by the Alfred Renyi Mathematical Institute,
OTKA grants 46365. He is presently supported by the Bolyai Janos Scholarship and OTKA grants 72537 and 81203.
The second author is supported by the Marie Curie Fellowship PIEF-GA-2009-235437 and by NSA grant H98230-10-1-0171 .
\\
Keywords: contact singularities, Thom polynomial, partial
resolution, equivariant ocalization
\\ AMS Subject classification 32S20}

\begin{abstract} Thom polynomials measure how global topology forces singularities.
%They are universal polynomials such that their specializations at certain characteristic classes express the cohomology classes represented by singularity submanifolds.
The power of Thom polynomials predestine them to be a useful tool not only in differential topology, but also in algebraic geometry (enumerative geometry, moduli spaces) and algebraic combinatorics. The main obstacle of their widespread application is that only a few, sporadic Thom polynomials have been known explicitly. In this paper we develop a general method for calculating Thom polynomials of singularities. Along the way, relations with the equivariant geometry of (punctual, local) Hilbert schemes, and with iterated residue identities are revealed.
\end{abstract}

\maketitle

\section{Introduction}

%Thom polynomials measure how global topology forces singularities.
%Namely,
For a holomorphic map $F:N^n\to P^p$ between compact complex
manifolds one can consider the set of points $\eta(F)$ in the source
manifold $N$ where the map has a certain kind of singularity $\eta$. The
Thom polynomial $\Tp(\eta)$ of $\eta$ is a multivariate polynomial
depending only on $\eta$, with the property that the cohomology class
represented by the closure of $\eta(F)$ is equal to the specialization
of $\Tp(\eta)$ at the characteristic classes $c_i(N)$, $F^*(c_i(P))$.
For this statement to hold, the map $F$ must satisfy transversality
conditions. There is an analogous theory for real smooth maps, where one
studies a polynomial of the Stiefel-Whitney classes of $TN$ and $F^*TP$
expressing $[\eta(F)]\in H^*(N;\Z_2)$. These real Thom polynomials can
be calculated from the complex Thom polynomials \cite{borel-haefliger},
hence we restrict our study to the complex case.

We must specify what the singularity $\eta$ means. For the definition of
$\eta(F)$ to make sense, $\eta$ must be a subset of $\E_0(n,p)$, the vector space of holomorphic map germs $(\C^n,0)\rightarrow (\C^p,0)$, invariant under
the action of the holomorphic reparametrization groups of $(\C^n,0)$ and
$(\C^p,0)$. A natural choice for such a subset is obtained by
considering those germs whose local algebras (see definition below) are isomorphic.
Subsets obtained by this way are called contact singularities. In the
language of equivariant cohomology, the Thom polynomial of the first
paragraph is the $\GL(n)\times \GL(p)$-equivariant cohomology class represented by the closure of the contact class $\eta$ in $H_{\GL(n)\times \GL(p)}^*(\E_0(n,p))$.

Thom polynomials have applications in various parts of differential topology, algebraic geometry, and algebraic combinatorics, let us just allude to the simplest case, the celebrated Giambelli-Thom-Porteous
formula---where $\eta$ is the set of germs with corank $k$ differential. The present paper is devoted to the problem of calculating Thom polynomials in the $n \leq p$ case, as well as the study of their
interior structure.

As discussed above, Thom polynomials are parameterized by an algebra
$\mathcal Q$, and two integers $n$, and $p$; we will call such a Thom polynomial
$\Tp_{\mathcal Q}(n,p)$. It turns out that the corresponding $\eta\subset
\E_0(n,p)$ is finite codimensional if and only if $\mathcal Q$ is a finite
dimensional, commutative, local algebra. We recently showed
in \cite{dstab} that---under technical conditions---the Thom polynomials $\Tp_{\mathcal Q}(n,p)$ for the same $\mathcal Q$ but varying $n$ and $p$ can be organized into a formal power series
in infinitely many variables (cf. Section \ref{sec:d-stab}). The direct application of the method of
Restriction Equations \cite{rrtp} yields certain individual Thom polynomials,
but not the calculation of whole Thom series (unless $\mathcal Q$ has very small
dimension, see Section \ref{sec:exa}).

In a recent paper \cite{bsz06} B\'erczi and Szenes introduced a new method of studying
Thom polynomials of so-called Morin singularities, ie. singularities corresponding to algebras $\mathcal Q=\C[[x]]/(x^i)$.
One of their key ideas is the usage of (improved versions of) equivariant localization formulas.
Their method naturally presents the whole Thom series. As a result, they reduced the calculation of the
Thom series of Morin singularities to a finite set of data, as well as determined this
data for $i\leq 7$. Another important novelty of \cite{bsz06} is the encoding of the Thom
series of Morin singularities by iterated residues of certain rational functions.

In the present paper we revisit a partial resolution construction of J.
Damon for all contact singularities. The B\'erczi-Szenes equivariant localization
formula applied to this construction leads to our main
result, a Localization Formula for $\Tp_{\mathcal Q}(n,p)$, see Theorem \ref{lf}.
The form of this formula implies different stabilization properties of
Thom polynomials, including the long-hidden $d$-stability property of
Thom series. The inputs of the Localization formula for a fixed $\mathcal Q$ is a
{\em finite} set of various Euler classes inside of a Grassmannian, or
Hilbert scheme; showing how a finite set can encode a whole Thom series.
Below we develop different techniques to find these Euler classes, and
hence we will calculate several new Thom series. These examples include
Thom series corresponding to local algebras of dimension $<6$, as well
as a two-parameter list of algebras.

The present work relies on the recent rapid developments of Thom polynomial theory; such as the method of
restriction equations of the authors, and the various extensions and
applications made by M. Kazarian. It is however particularly triggered
by the new ideas and results of B\'erczi and Szenes \cite{bsz06}.

\subsection{The plan of the paper} In Section \ref{sec:tpforcontact} we recall contact
singularities and give a firm foundation of their Thom polynomial
theory. In Section  \ref{sec:exa} we summarize known Thom polynomials. In Section
\ref{sec:resolution}, \ref{sec:localization} and \ref{sec:locforcontact}
 we review Damon's partial resolution of contact singularities
and explain how it yields an equivariant Localization Formula for Thom
polynomials. In Section  \ref{sec:stability} we explore stabilization properties that our
main formula implies. In Section  \ref{sec:calc} we develop geometric and algebraic
techniques to calculate the inputs of the Localization Formula. In Section  \ref{sec:return} we explain the connection with the equivariant geometry of the local punctual Hilbert scheme. Section  \ref{sec:phi} presents the calculation of Thom series of singularities $\Phi_{n,r}$. In Section \ref{sec:generatingfn} we study generating functions of Thom series, iterated residue formulas, and their relations to geometry (as well as iterated residue identities they depend on). The interesting phenomena of formally applying Thom polynomial formulas to dimensions where they are
not defined is discussed in Section \ref{sec:smallp}.

Throughout the paper we will work in the complex analytic category. Cohomology will be meant with rational coefficients.

\subsection{Acknowledgements}
The authors are indebted to M. Kazarian for several helpful discussions on the topic. He also informed us about his work in progress \cite{kaza:gysin},\cite{kaza:noas} in which he calculates Thom series using the Gysin-map. We are also grateful to P. Frenkel, B.
K\H om\H uves, T. Ohmoto, G. Smith, A. Szenes, G. B\'erczi, E. Szab\'o and C. T. C. Wall for valuable comments. The first author thanks T. Ohmoto for the opportunity to visit  Hokkaido University and RIMS which greatly helped this work.

\section{Thom polynomials of contact singularities}\label{sec:tpforcontact}

\subsection{Contact equivalence of finite germs} \label{sec:contact}
Consider $\E_0(n,p)$, the vector space of holomorphic map germs $(\C^n,0)\rightarrow (\C^p,0)$. Throughout the paper we assume that $n\leq p$. The vector space $\E_0(n):=\E_0(n,1)$ is an algebra without an identity. The space $\E_0(n,p)$ is a module over $\E_0(n)$. A map germ $g\in \E_0(n,p)$ induces a pullback $g^*:\E_0(p)\to \E_0(n)$ by composition.

\begin{definition} \label{quotientalgebra} The ideal $I_g$ of a germ $g \in \E_0(n,p)$ is the ideal in $\E_0(n)$ generated by $g^*\E_0(p)$.  The {\em quotient algebra} $Q_g$ of a germ $g \in \E_0(n,p)$ is defined by $Q_g = \E_0(n)/ I_g$.
\end{definition}

Here, and in the whole paper, an ideal {\em generated} by some ring elements is the smallest ideal containing the specified ring elements, even if the ring has no identity.  In singularity theory one usually considers the one  dimension larger {\em local algebra}---defined as $\mathcal{Q}_g:=\E(n)/ I_g$, where $\E(n)$ is the ring of function germs $(\C^n,0)\to\C$---which has an identity. The two versions can easily be obtained from each other, but the quotient algebra comes up more naturally in our geometric setting. We will be concerned with germs $g$ for which the quotient algebra is finite dimensional. We call these germs {\em finite}. Finite germs only exist for $n\leq p$, this is the reason of our overall assumption of $n\leq p$.  Finiteness is also equivalent to the property that the ideal $(g^{*}\E_0(p))$ contains a power of $\E_0(n)$. For a finite germ, in local coordinates $g=(g_1(x_1, \ldots, x_n), \cdots, g_p(x_1, \ldots, x_n))$, we have
$$\mathcal{Q}_g=\C[[x_1,\ldots,x_n]]/  (g_1, \ldots, g_p),\qquad\text{and}\qquad  Q_g=\M_n /(g_1, \ldots, g_p),$$
where $\M_n$ is the maximal ideal of $\C[[x_1,\ldots,x_n]]$, that is, the ideal generated by the variables $(x_1,\ldots,x_n)$. For finite germs the quotient algebra is nilpotent, so we will also call $Q_g$ the {\em nilpotent algebra} of the germ $g$ to distinguish it from the local algebra.

\smallskip

The invertible holomorphic germs $(\C^n,0)\to (\C^n,0)$ form the {\em right group} $\mathcal{R}(n)$. This group acts on $\E_0(n)$ by composition, and hence it acts on the set of ideals of $\E_0(n)$.

\begin{definition} \label{contactequivalent} Two germs $f,g \in \E_0(n,p)$ are {\em contact equivalent} if their ideals are in the same $\mathcal{R}(n)$ orbit. An equivalence class $\eta\subset \E_0(m,n)$ is called a (contact) singularity.
\end{definition}

In singularity theory one considers the so called contact group \cite[Ch3, 1.6]{avgl}
    \[ \mathcal{K}=\mathcal{K}(n,p)=\{(h,M):h\in \mathcal{R}(n), M \text{ is a germ}\  (\C^n,0)\to (\GL(p),1)\},\]
acting on the vector space $\E_0(n,p)$ by $\big((h,M)g\big)(x)=M(x)g(h^{-1}(x))$, and defines germs to be contact equivalent if they are in the same orbit. It is a theorem of Mather \cite[Thm 2.9]{mather4} that for finite germs the two definitions are equivalent. Hence we will denote the contact equivalence class of a germ $g$ by $\mathcal{K}g$. Equivalently
\begin{theorem}[Mather] \label{isoq-germ} The finite germs $f,g \in \E_0(n,p)$ are contact equivalent if and only if their nilpotent algebras are isomorphic.
\end{theorem}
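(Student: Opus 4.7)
The plan is to prove both implications using Definition \ref{contactequivalent}, which declares $f$ and $g$ contact equivalent precisely when $I_f$ and $I_g$ lie in the same $\mathcal{R}(n)$-orbit. The forward direction is formal: if $h\in\mathcal{R}(n)$ satisfies $h^*(I_f)=I_g$ (where $h^*:\E_0(n)\to\E_0(n)$ denotes pullback $\varphi\mapsto\varphi\circ h$), then $h^*$ is a $\C$-algebra automorphism of $\E_0(n)$ carrying $I_f$ onto $I_g$ and so descends to an isomorphism $Q_f\to Q_g$.

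For the converse, given an algebra isomorphism $\phi:Q_f\to Q_g$, I aim to produce $h\in\mathcal{R}(n)$ with $h^*(I_f)=I_g$. Since $\bar{x}_1,\dots,\bar{x}_n$ generate $Q_f$ as an algebra, the elements $\phi(\bar{x}_i)$ generate $Q_g$; the nilpotence and finite-dimensionality of $Q_g$ (a Nakayama-type argument) then force the images of $\phi(\bar{x}_i)$ in $Q_g/Q_g^2\cong\M_n/(I_g+\M_n^2)$ to span that vector space. I pick lifts $y_i\in\M_n$ of $\phi(\bar{x}_i)$ and set $h=(y_1,\dots,y_n):(\C^n,0)\to(\C^n,0)$. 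To make $h$ a diffeomorphism, I use that the linear parts $L_i$ of the $y_i$'s project to a spanning set of $\C^n/dg(0)^T(\C^p)$, and that the replacement $y_i\mapsto y_i+\alpha_i$ for $\alpha_i\in I_g$ (which does not alter $\phi(\bar{x}_i)$) shifts $L_i$ by an arbitrary vector in $dg(0)^T(\C^p)$. A short genericity argument on the polynomial $\det[L_1+v_1\mid\dots\mid L_n+v_n]$ in the shifts $v_i\in dg(0)^T(\C^p)$ — exhibiting one nonzero term by completing a transverse complement of $dg(0)^T(\C^p)$ to a basis of $\C^n$ — shows the adjustment can be made so that $dh(0)$ is invertible, placing $h$ in $\mathcal{R}(n)$.

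With such an $h$ in hand, the equality $h^*(I_f)=I_g$ follows cleanly. For each generator $f_j$ of $I_f$, the coset of $f_j\circ h$ in $Q_g$ equals $f_j(\phi(\bar{x}_1),\dots,\phi(\bar{x}_n))=\phi(\bar{f}_j)=0$, so $h^*(I_f)\subseteq I_g$. The induced ring map $Q_f\to Q_g$ agrees with $\phi$ on the generators $\bar{x}_i$ and hence equals the isomorphism $\phi$; its kernel $(h^*)^{-1}(I_g)/I_f$ is therefore trivial, forcing $(h^*)^{-1}(I_g)=I_f$, and by bijectivity of $h^*$, $h^*(I_f)=I_g$. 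The main obstacle is the invertibility of $h$ in the converse: Nakayama alone only gives spanning modulo $dg(0)^T(\C^p)$, and it is precisely the additional freedom to adjust lifts by $I_g$-elements that closes the gap to spanning all of $\C^n$. Finiteness of the germs is essential here, both to let Nakayama apply cleanly and to keep the lifting step elementary.
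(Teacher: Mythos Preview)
Your proof is correct and follows essentially the same route as the paper's: both lift the images $\phi(\bar x_i)$ to elements $p_i\in\M_n$, form $h=(p_1,\dots,p_n)$, and argue that the lifts can be adjusted within $I_g$ so that $h\in\mathcal R(n)$ and $h$ carries $I_f$ to $I_g$. The paper simply asserts that ``it is easy to check'' the lifts can be so chosen, whereas you spell out the Nakayama step identifying $Q_g/Q_g^2\cong\M_n/(I_g+\M_n^2)$ and the genericity/basis-completion argument showing $dh(0)$ can be made invertible; this is a faithful expansion of the paper's sketch rather than a different approach.
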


Indeed, suppose that we have and isomorphism $\phi:Q_f\to Q_g$, and let $[x_i]$ denote the image of $x_i\in\M_n$ in $Q_f$. Pick $p_i\in\M_n$ such that $[p_i]=\phi[x_i]$ in $Q_g$. It is easy to check that the $p_i$'s can be chosen in such a way that $h=(p_1,\dots,p_n)\in \E_0(n,n)$ is an element of $\mathcal{R}(n)$ and $hI_f=I_g$.

\subsection{Thom polynomials}\label{sec:tp}

Given a map $F:N^n\to P^p$ between complex manifolds, and a point $x\in N$, we can choose charts around $x$ and $F(x)$, and consider the germ of $F$ at $x$ in these charts. The contact singularity of this germ does not depend on the choice of the charts. Indeed, it is a consequence of Mather's theorem cited above that after reparametrizing the source $(\C^n,0)$, and target $(\C^p,0)$ spaces, the ideal of the germ will be in the same $\mathcal{R}(n)$ orbit. Equivalently, we can refer to the fact that the group $\K(n,p)$ contains the group $\mathcal{R}(n)\times \mathcal{R}(p)$ of holomorphic reparametrizations of the source $(\C^n,0)$ and target spaces $(\C^p,0)$. (In this context the group $\mathcal{R}(p)$ is usually
called the left group and denoted by $\mathcal{L}(p)$.)

 Therefore it makes sense to talk about the contact singularity of $F$ at a point $x\in N$. Hence, for a map $F:N^n\to P^p$ and a contact singularity $\eta\subset \E(n,p)$, we define the singularity subsets
   $$\eta(F)=\{x\in N| \text{ the germ of $F$ at $x$ is in } \eta\}.$$

\noindent After some preparations (Sections \ref{sec:pd}-\ref{sec:jetapprox}), in Theorem~\ref{tp} we explain the following statements.
\begin{quotation}
{\em If $F$ satisfies certain transversality conditions, then the subset $\eta(F)$ defines a cohomology class $[\eta(F)]\in H^*(N)$. Moreover, this class can be expressed as a universal polynomial (the Thom polynomial, $\Tp(\eta)$) of the Chern classes of $TN$ and $f^*TP$.}
\end{quotation}
First, in Section \ref{sec:pd}, we will discuss degeneracy loci, and how universal cohomology classes are associated with them. Then we will interpret $\eta(F)$ as a degeneracy locus in Section \ref{sec:jetapprox}. These two sections serve as a rigorous definition of the Thom polynomial for a contact singularity class.

\subsection{Poincar\'e dual, equivariant cohomology and degeneracy loci}\label{sec:pd}
In this section we discuss degeneracy loci, and cohomology class represented by them. First recall that subvarieties $Y \subset X$ represent cohomology classes in the underlying space (see e.g. \cite[p.219]{fulton:young}).

 \begin{proposition}[{\bf Definition}] \label{dual} If $X$ is a smooth algebraic variety and $Y$ is an irreducible subvariety of complex codimension $d$ then there is a unique element $[Y\subset X]\in H^{2d}(X)$ such that
  \begin{enumerate}
   \item $[Y\subset X]$ is supported on $Y$, i.e. $[Y\subset X]$ restricted to $X\setminus Y$ is zero,
   \item $[Y\subset X]|_{X\setminus\sing Y}=[Y^o\subset (X\setminus\sing Y)]$.
  \end{enumerate}
 Here $\sing Y$ denotes the singular subvariety of $Y$ and $Y^o=Y\setminus \sing Y$. The cohomology class $[Y^o\subset
(X\setminus\sing Y)]$ is defined by extending the Thom class of a tubular neighborhood of the proper submanifold $Y^o\subset (X\setminus\sing Y)$ via excision.
 \end{proposition}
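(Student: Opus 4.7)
My plan is to reduce both existence and uniqueness to a single vanishing result for relative cohomology: for any closed algebraic subvariety $S \subset X$ of complex codimension at least $c$, the groups $H^{k}(X, X \setminus S)$ vanish for $k < 2c$. I would prove this by induction on $\dim S$ using a Whitney stratification $S = S_0 \supset S_1 \supset \cdots \supset S_r = \emptyset$ in which each difference $S_i \setminus S_{i+1}$ is smooth of complex codimension at least $c+i$ in $X \setminus S_{i+1}$. For a single smooth stratum, the Thom isomorphism yields $H^{k}(X \setminus S_{i+1}, X \setminus S_i) \cong H^{k-2(c+i)}(S_i \setminus S_{i+1})$, which vanishes in the required range. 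Splicing the long exact sequences of the triples $(X, X \setminus S_i, X \setminus S_{i-1})$ then completes the induction.

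Because $Y$ is irreducible, $\sing Y$ has complex codimension at least $d+1$ in $X$, so the lemma yields $H^{2d}(X, X \setminus \sing Y) = H^{2d+1}(X, X \setminus \sing Y) = 0$. For uniqueness, if $\alpha, \alpha'$ both satisfy (1) and (2) then their difference $\gamma$ restricts to zero on $U := X \setminus \sing Y$ by (2); the long exact sequence of the pair $(X, U)$ then lifts $\gamma$ into $H^{2d}(X, U) = 0$, so $\gamma = 0$.

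For existence, the tubular neighborhood theorem applied to the smooth closed submanifold $Y^o \subset U$ of complex codimension $d$, together with excision, produces a Thom class $\tau \in H^{2d}(U, U \setminus Y^o) = H^{2d}(X \setminus \sing Y, X \setminus Y)$. The exact sequence of the triple $(X, X \setminus \sing Y, X \setminus Y)$,
$$H^{2d}(X, X \setminus \sing Y) \to H^{2d}(X, X \setminus Y) \to H^{2d}(X \setminus \sing Y, X \setminus Y) \to H^{2d+1}(X, X \setminus \sing Y),$$
has vanishing outer terms by the lemma, so $\tau$ lifts uniquely to a class $\tilde\tau \in H^{2d}(X, X \setminus Y)$. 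The image of $\tilde\tau$ under $H^{2d}(X, X \setminus Y) \to H^{2d}(X)$ is the desired $[Y \subset X]$: property (1) is automatic because the class factors through $H^{2d}(X, X \setminus Y)$, and property (2) follows because the restriction of this image to $U$ equals the image of $\tau$ in $H^{2d}(U)$, which by construction is $[Y^o \subset U]$.

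The main obstacle is the codimension vanishing lemma itself, since $\sing Y$ is potentially singular and the direct Thom-isomorphism calculation only applies to individual smooth strata; the inductive bookkeeping over strata of decreasing dimension is where the real work lies. Everything else reduces to diagram chasing in the long exact sequences.
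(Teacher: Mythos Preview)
The paper does not actually prove this statement: it is presented as a Proposition/Definition with a reference to \cite[p.~219]{fulton:young}, so there is no ``paper's own proof'' to compare against. Your argument is therefore supplying what the paper takes for granted.

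Your approach is correct and is essentially the standard one. The vanishing lemma $H^{k}(X, X\setminus S)=0$ for $k<2\codim_{\C} S$ is the right engine, and your use of it---once for uniqueness via the pair $(X,X\setminus\sing Y)$, once for existence via the triple $(X,X\setminus\sing Y,X\setminus Y)$---is clean. Two small remarks on presentation: first, in your stratification you claim each $S_i\setminus S_{i+1}$ has complex codimension \emph{at least $c+i$}; the safest way to arrange this is to stratify $S$ by dimension (so $S_i$ is the union of strata of dimension at most $\dim S - i$), and you should say so explicitly rather than invoking a generic ``Whitney stratification''. Second, the Thom isomorphism step requires the normal bundle of the smooth closed stratum $S_i\setminus S_{i+1}\subset X\setminus S_{i+1}$ to be orientable; this holds because the stratum is a complex submanifold, but with the paper's rational coefficients it would be true in any case. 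Neither point is a genuine gap.
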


If $Y$ has several components $Y_i$ (usually of the same codimension) then $[Y\subset X]$ is defined to be the sum of the
classes $[Y_i\subset X]$.  When the underlying space $X$ is clear from the context, we denote $[Y\subset X]$ by $[Y]$.

\medskip

We need the equivariant version of Proposition \ref{dual} above. Let $G$ be a complex algebraic Lie group. If $X$ is a smooth algebraic variety with a $G$-action, and $Y$ is a $G$-invariant subvariety then $Y$ represents a $G$-equivariant cohomology class in the equivariant cohomology of $X$, as follows (see e.g. \cite{kazass} or \cite{edidin-graham},or a recent account, see \cite{fulton:eq}).

\begin{theorem}[{\bf Definition}] \label{equi_dual}
Let $X$ be a smooth algebraic variety with a $G$-action, and $Y\subset X$ be a $G$-invariant irreducible subvariety of complex codimension $d$. Then there is a unique element $[Y\subset X]_G\in H^{2d}_G(X)$ (called the {\em $G$-equivariant Poincar\'e dual of $Y$ in $X$}) such that for all algebraic principal $G$-bundles $\pi:P\to M$ over a smooth  algebraic variety $M$ with classifying map $k:M\to BG$ we have
   \begin{equation}[P\times_GY\subset P\times_GX]=\tilde{k}^*[Y\subset X]_G,\label{eq:gpd} \end{equation}
where $\tilde{k}:P\times_G X\to EG\times_G X$ is induced by $k$.
 \end{theorem}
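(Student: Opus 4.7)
The plan is to construct $[Y\subset X]_G$ via the Borel construction and then verify both the defining property and uniqueness. Recall that $H^*_G(X):=H^*(EG\times_G X)$. In the algebraic/analytic category one follows Edidin--Graham: choose a sequence of smooth quasi-projective varieties $U_m$ carrying free $G$-actions, such that each $U_m\to U_m/G$ is an algebraic principal $G$-bundle and the (co)dimension of a suitable complement grows without bound, so that $H^k_G(X)\cong H^k(U_m\times_G X)$ for all $k$ less than some $N_m\to\infty$. Any approximation $U_m$ may be thought of as a finite-dimensional substitute for $EG$.

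For the fixed $Y$ of codimension $d$, I would first choose an approximation $U=U_m$ with $N_m>2d$. Since $G$ acts freely on $U$, the mixed quotient $U\times_G X$ is a smooth algebraic variety, and $U\times_G Y$ is an irreducible subvariety of the same complex codimension $d$. Proposition~\ref{dual} then produces a canonical class
\[
[U\times_G Y\subset U\times_G X]\in H^{2d}(U\times_G X)\cong H^{2d}_G(X),
\]
which I would take as the definition of $[Y\subset X]_G$. Independence of the choice of $U_m$ is proved by the usual double-fibration trick: given a second approximation $U'$, the product $U\times U'$ is again an approximation, and both candidate classes pull back to the class of $(U\times U')\times_G Y$ in $(U\times U')\times_G X$; combining this with the stabilization range shows the two definitions agree in $H^{2d}_G(X)$.

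For the naturality equation (\ref{eq:gpd}), I would argue that any algebraic principal $G$-bundle $\pi\colon P\to M$ admits, after restricting to any given compact (or by Jouanolou/Totaro-style tricks in the algebraic case), a $G$-equivariant classifying morphism $P\to U_m$ for $m$ large; this induces $\tilde k\colon P\times_G X\to U_m\times_G X$, and by construction $\tilde k^{-1}(U_m\times_G Y)=P\times_G Y$ \emph{scheme-theoretically}, with the map $\tilde k$ smooth (it is the pullback of $P\to U_m/G$ under a smooth base change). Smooth pullback preserves Poincar\'e duals of subvarieties, so $\tilde k^*[U_m\times_G Y\subset U_m\times_G X]=[P\times_G Y\subset P\times_G X]$, which is exactly (\ref{eq:gpd}).

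Uniqueness is then immediate: any two classes satisfying (\ref{eq:gpd}) must agree after pullback to $H^{2d}(U_m\times_G X)$ under the tautological bundle $U_m\to U_m/G$, and the pullback $H^{2d}_G(X)\to H^{2d}(U_m\times_G X)$ is an isomorphism in the stable range. The main technical obstacle I anticipate is the independence-of-approximation step, since in the complex-analytic setting one must justify the Edidin--Graham approximation scheme (existence of arbitrarily highly connected smooth $U_m$ with free algebraic $G$-action) and check compatibility of Poincar\'e duals under the resulting smooth pullbacks; once that infrastructure is in place the rest of the argument is essentially formal.
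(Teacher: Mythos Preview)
Your proposal is correct and follows the standard Edidin--Graham approximation scheme, which is precisely what the paper cites for this result; the paper itself does not give a proof of this Theorem/Definition but refers to \cite{kazass}, \cite{edidin-graham}, \cite{fulton:eq}. The only additional content in the paper is the Remark immediately following, which spells out your construction in the special case $G=T=\GL(1)^r$ using the explicit approximations $U_m=(\C^{m+1}\setminus\{0\})^r\to(\P^m)^r$ and the observation that $\tilde k^*$ is an isomorphism in degrees $\leq 2m$ --- this is exactly your stable-range argument made concrete, so your general treatment subsumes it.
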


Intuitively $[Y\subset X]_G$ is the class represented by $EG\times_GY$ in $EG\times_GX$. From the next section on, we will be mainly interested in the case when $X$ is a vector space. Then $H^*_G(X)\iso H^*(BG)$ canonically. The class $[Y\subset X]_G$ has various definitions and names in the literature (see \cite{zsolt} for an account). We will also use the notation $[Y]_G$ or simply $[Y]$ for $[Y\subset X]_G$, when the underlying space $X$ and the group action is clear from the context.

\begin{remark} We can make this construction more explicit for the torus $T=\GL(1)^r$ (this is the case we need in the Localization Formula below): repeat the  construction above for the principal $T$-bundle $P=\big(\C^{d+1}\setminus \{0\}\big)^r\to (\P^d)^r$. Here the classifying map $k$ is the standard inclusion $(\P^d)^r\to (\P^\infty)^r=BT$. It is not difficult to show that
   \[\tilde{k}^*:H^j(ET\times_TX)\to H^j(P\times_TX)\]
 is bijective for $j\leq 2d$. So for large enough $d$, equation (\ref{eq:gpd}) defines $[Y\subset X]_T$ uniquely:
        \[[Y\subset X]_T=(\tilde{k}^*)^{-1}[P\times_TY\subset P\times_TX].\]
\end{remark}
\begin{definition} \label{def:deg-locus} Suppose now that $s:M\to E$ is a section of the fiber bundle $\varphi:E=P\times_GX\to M$. If $Y\subset X$ is a $G$-invariant subset, then we use the notation $Y(\varphi):=P\times_GY$ for the set of `$Y$-points' in $E$ and $Y(s):=s^{-1}\big(Y(\varphi)\big)$ for the set of `$Y$-points of $s$'. We call $Y(s)$ the {\em degeneracy locus} corresponding to $Y$ and $s$.
\end{definition}

 To make a statement about the class represented by $Y(s)\subset M$ (Corollary \ref{tp_deg_locus}) we need to discuss transversality.

 \begin{definition}Let $f:A\to B$ be an algebraic map between algebraic manifolds and $Y\subset B$ be a subvariety. The map $f$ is {\em transversal} to $Y$ if it is transversal to all singularity strata of $Y$, i.e. to the (not necessarily equidimensional) manifolds $Y^o=Y\setminus \sing Y$, $\sing Y\setminus \sing(\sing Y)$ and so on.
  \end{definition}

 The following is a well known fact.
\begin{proposition}\label{trans}If $f:A\to B$ is transversal to $Y\subset B$, then $f^*([Y])=[f^{-1}(Y)]$.\end{proposition}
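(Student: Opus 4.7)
The plan is to use the uniqueness characterization in Proposition \ref{dual} to reduce the statement to the smooth case. Set $Z=f^{-1}(Y)\subset A$ and $\alpha:=f^*[Y\subset B]\in H^{2d}(A)$; the goal is to show that $\alpha$ satisfies the two defining properties of $[Z\subset A]$, and then to verify those properties by a standard Thom-class computation on the smooth part.

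First, using transversality applied successively along the filtration $Y\supset \sing Y\supset \sing(\sing Y)\supset\cdots$, one argues that $Z$ is a subvariety of $A$ of the same complex codimension $d$, that $\sing Z=f^{-1}(\sing Y)$, and (inductively) that the whole singular filtration of $Z$ is the $f$-preimage of the singular filtration of $Y$. In particular $Z^o=f^{-1}(Y^o)$, and $f$ restricts to a map $A\setminus\sing Z\to B\setminus\sing Y$ which is transversal to the smooth submanifold $Y^o$. Property (1) of Proposition \ref{dual} for $\alpha$ is then immediate: since $[Y\subset B]$ is zero on $B\setminus Y$ and $f(A\setminus Z)\subset B\setminus Y$, functoriality of restriction gives $\alpha|_{A\setminus Z}=0$.

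Property (2) reduces to the smooth case: we must check that on $A\setminus\sing Z$ the class $\alpha$ equals $[Z^o\subset A\setminus\sing Z]$. Here $Y^o\subset B\setminus\sing Y$ is a closed smooth submanifold, and $Z^o$ is its transverse preimage. Transversality implies $Z^o$ is smooth and that $\nu_{Z^o/(A\setminus\sing Z)}=(f|_{Z^o})^*\nu_{Y^o/(B\setminus\sing Y)}$. Naturality of Thom classes under pullback of vector bundles, combined with the fact that $f^{-1}$ of a tubular neighborhood of $Y^o$ is (after possibly shrinking) a tubular neighborhood of $Z^o$, shows that pulling back the excision-extended Thom class of $Y^o$ produces the excision-extended Thom class of $Z^o$. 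This is precisely $f^*[Y^o\subset B\setminus\sing Y]=[Z^o\subset A\setminus\sing Z]$. Combining with property (1) and invoking the uniqueness in Proposition \ref{dual} yields $\alpha=[Z\subset A]$.

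The main obstacle is the bookkeeping around the singular strata: one must justify that the $f$-preimage of the singular filtration of $Y$ really is the singular filtration of $Z$, which is what allows the reduction to a single smooth stratum and permits Proposition \ref{dual} to recognize $\alpha$. Once this is in place, the Thom-class / tubular-neighborhood step on the smooth stratum is entirely standard, and the support condition is formal.
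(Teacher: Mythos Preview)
The paper does not give a proof of this proposition at all; it is simply stated as ``a well known fact'' and then immediately generalized to the equivariant setting. So there is nothing to compare your argument against in the paper. Your approach via the uniqueness characterization in Proposition~\ref{dual}, together with naturality of Thom classes on the smooth stratum, is a standard and correct way to justify the statement.

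The one genuine soft spot is exactly the step you flag: the claim that the singular filtration of $Z=f^{-1}(Y)$ is the $f$-preimage of the singular filtration of $Y$. Only the inclusion $\sing Z\subset f^{-1}(\sing Y)$ is automatic (since $f^{-1}(Y^o)$ is smooth by transversality to $Y^o$); the reverse inclusion is not obvious and you do not prove it. Fortunately you do not need it. Because $f$ is transversal to every stratum of $\sing Y\supset\sing(\sing Y)\supset\cdots$, the set $f^{-1}(\sing Y)$ has complex codimension strictly greater than $d$ in $A$. Hence $H^{2d}_{f^{-1}(\sing Y)}(A)=0$ and the restriction $H^{2d}(A)\to H^{2d}\bigl(A\setminus f^{-1}(\sing Y)\bigr)$ is injective. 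Both $f^*[Y]$ and $[Z]$ restrict there to the Thom class of the closed smooth submanifold $f^{-1}(Y^o)$ (for $f^*[Y]$ this is your smooth-case computation; for $[Z]$ use property~(2) of Proposition~\ref{dual} together with $\sing Z\subset f^{-1}(\sing Y)$). Injectivity then forces $f^*[Y]=[Z]$ on all of $A$, with no need to match the singular filtrations.
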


This statement easily generalizes to the equivariant setting.
\begin{proposition}\label{prop:equitrans}
Let the $G$-equivariant map $f:A\to B$ be transversal to the $G$-invariant subvariety $Y\subset B$. Then
$f^*([Y]_G)=[f^{-1}(Y)]_G$.
\end{proposition}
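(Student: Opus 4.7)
The plan is to reduce the equivariant statement to the non-equivariant Proposition~\ref{trans} via the same Borel-type construction used in Theorem~\ref{equi_dual} to define $[Y]_G$. Given any algebraic principal $G$-bundle $\pi : P \to M$ over a smooth base, the $G$-equivariant map $f : A \to B$ induces a bundle map
\[
  \tilde{f} : P \times_G A \longrightarrow P \times_G B
\]
over $M$, and under $\tilde{f}$ the preimage of $P \times_G Y$ is exactly $P \times_G f^{-1}(Y)$. If I can show that $\tilde f$ is transversal to $P \times_G Y$, then ordinary transversality (Proposition~\ref{trans}) gives
\[
  \tilde f^{\,*}\bigl[P\times_G Y \subset P\times_G B\bigr] \;=\; \bigl[P\times_G f^{-1}(Y) \subset P\times_G A\bigr].
\]

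The key step is the transversality claim itself. The singular stratification of $Y$ (namely $Y^o, \sing Y \setminus \sing(\sing Y), \dotsc$) is intrinsic and therefore $G$-invariant, so it induces a stratification of $P \times_G Y \subset P \times_G B$ whose strata are precisely $P \times_G (\text{strata of } Y)$. Transversality is a local (in fact infinitesimal) condition, so after a local trivialization of $P$ the map $\tilde f$ looks like $\mathrm{id} \times f$ on a product neighborhood, and the tangent space to a stratum $P \times_G S$ splits as the horizontal direction along $M$ plus the tangent space to $S$ in the fiber. Since $f$ is transversal to every stratum of $Y$ by hypothesis, this product form makes $\tilde f$ transversal to every stratum of $P \times_G Y$.

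To finish, I choose $P \to M$ to be a sufficiently high-dimensional algebraic approximation of $EG \to BG$ (for example the torus model of the Remark, or more generally an Edidin--Graham approximation), so that the classifying maps $\tilde k_A$ and $\tilde k_B$ for the two bundles pull back classes isomorphically in the relevant degree $2d$. The defining formula~\eqref{eq:gpd} then says $\tilde k_B^{\,*}[Y]_G = [P\times_G Y]$ and $\tilde k_A^{\,*}[f^{-1}(Y)]_G = [P \times_G f^{-1}(Y)]$. Naturality $\tilde f \circ \tilde k_A \sim \tilde k_B \circ \tilde f$ (i.e. the square relating $f$ and $\tilde f$ commutes up to the identification of classifying spaces) combined with the displayed equation above yields $f^*[Y]_G = [f^{-1}(Y)]_G$, as required. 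The main obstacle is really the transversality-lifting step in paragraph two: one must check carefully that the intrinsic stratification of $Y$ lifts to the associated bundle and that fiberwise transversality at every stratum implies total-space transversality; once this is in hand, the rest is the standard mechanism already invoked to define $[Y]_G$.
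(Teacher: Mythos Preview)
Your argument is correct and is exactly the natural way to make precise what the paper only asserts. In the paper there is no detailed proof of this proposition: the authors simply write ``This statement easily generalizes to the equivariant setting'' immediately before stating it, leaving the reader to supply the passage to the Borel construction. Your proposal does precisely that---pass to an associated bundle $P\times_G(-)$, observe that the intrinsic singular stratification of $Y$ is $G$-invariant and hence lifts to $P\times_G Y$, check transversality of $\tilde f$ by local triviality, apply the non-equivariant Proposition~\ref{trans}, and conclude via the defining relation~\eqref{eq:gpd} together with injectivity of $\tilde k^*$ on a high enough approximation. Each step is sound; the commutativity $\tilde k_B\circ\tilde f = f_E\circ\tilde k_A$ is exact (not merely up to homotopy) once the same classifying map $k$ is used for both bundles, so the final identification goes through cleanly.
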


As a consequence, the equivariant class $[Y]_G$  determines the class of the degeneracy locus  $Y(s)$:

\begin{corollary}\label{tp_deg_locus} If the section $s:M\to E$ of the vector bundle $\varphi:E=P\times_GX\to M$ is transversal to $Y(\varphi)=P\times_GY$ then $[Y(s)]=k^*[Y]_G $ where $k:M\to BG$ is the classifying map of $P$.
\end{corollary}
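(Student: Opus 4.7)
The corollary asks us to combine the transversality statement (Proposition \ref{prop:equitrans}) with the defining property of the equivariant Poincar\'e dual (Theorem \ref{equi_dual}). My plan is to carry out these two steps in sequence, then reconcile the answer with the identification $H_G^*(X)\cong H^*(BG)$ that is used implicitly in the statement (valid here because $X$ is a vector space and $E=P\times_G X\to M$ a vector bundle).

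\emph{Step 1 (Transversality).} View $s:M\to E$ as a map of smooth algebraic manifolds. By hypothesis $s$ is transversal to the subvariety $Y(\varphi)=P\times_G Y\subset E$, and $Y(s)=s^{-1}(Y(\varphi))$ by definition. Proposition \ref{trans} therefore gives
\[
 [Y(s)]\;=\;s^{*}\bigl[P\times_G Y\subset P\times_G X\bigr]\ \in\ H^{*}(M).
\]

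\emph{Step 2 (Defining property of $[Y]_G$).} By Theorem \ref{equi_dual} applied to the principal bundle $P\to M$ and its classifying map $k:M\to BG$,
\[
 \bigl[P\times_G Y\subset P\times_G X\bigr]\;=\;\tilde{k}^{*}\bigl[Y\subset X\bigr]_G,
\]
where $\tilde{k}:P\times_G X\to EG\times_G X$ is the map induced by $k$. Substituting into Step 1 yields
\[
 [Y(s)]\;=\;s^{*}\tilde{k}^{*}\bigl[Y\subset X\bigr]_G\;=\;(\tilde{k}\circ s)^{*}\bigl[Y\subset X\bigr]_G.
\]

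\emph{Step 3 (Section identification).} Let $\pi_P:P\times_G X\to M$ and $\pi_{EG}:EG\times_G X\to BG$ be the bundle projections; the defining square for $\tilde{k}$ commutes, i.e.\ $\pi_{EG}\circ\tilde{k}=k\circ\pi_P$. Since $s$ is a section, $\pi_P\circ s=\mathrm{id}_M$, and consequently
\[
 \pi_{EG}\circ(\tilde{k}\circ s)\;=\;k\circ\pi_P\circ s\;=\;k.
\]
Thus $\tilde{k}\circ s:M\to EG\times_G X$ is a lift of $k$ along the affine bundle $\pi_{EG}$. Because $X$ is a vector space, $\pi_{EG}$ is a homotopy equivalence and $\pi_{EG}^{*}$ identifies $H^{*}(BG)$ with $H_G^{*}(X)$; any two lifts of $k$ pull identical classes back identically. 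Therefore $(\tilde{k}\circ s)^{*}[Y\subset X]_G=k^{*}[Y]_G$, and combining with Step 2 gives the claim $[Y(s)]=k^{*}[Y]_G$.

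\emph{Main obstacle.} The two substantive inputs (transversality and the characterizing equation of $[Y]_G$) have already been recorded, so each individual step is routine. The only subtle point is Step 3: one must invoke the vector-bundle hypothesis on $E$ to identify $H_G^{*}(X)$ with $H^{*}(BG)$ and thereby make sense of $k^{*}[Y]_G$; without this one would only get a statement inside $H_G^{*}(X)$ pulled back through $\tilde{k}\circ s$. This is minor but needs to be spelled out to match the wording of the corollary.
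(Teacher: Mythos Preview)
Your proof is correct and is essentially the argument the paper has in mind: the corollary is stated without proof, as an immediate consequence of Proposition~\ref{trans} and the defining property (\ref{eq:gpd}) of $[Y]_G$ from Theorem~\ref{equi_dual}, together with the contractibility of the fiber $X$. Your Steps~1--3 simply spell this out, and your observation that the vector-space hypothesis is what makes $k^*[Y]_G$ meaningful is exactly the right point to flag.
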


\begin{remark} In the complex algebraic setting the existence of a transversal section is not guaranteed. Nevertheless $k^*[Y]_G $ is always an obstruction: if $k^*[Y]_G $ is non-zero then there is no section $s$ with  $Y(s)=\emptyset$, since $k^*[Y]_G $ is supported on $Y(s)$. The theory can be extended to the real smooth category. In that case the existence of the Poincar\'e dual is not automatic, but a generic section is transversal.
 \end{remark}

\subsection{Jet approximation: reduction to finite dimension} \label{sec:jetapprox} In this section we interpret $\eta(F)$ (from Section \ref{sec:tp}) as a degeneracy locus. For this we need a finite dimensional approximation of $\E_0(n,p)$, and related notions.

The vector space of $k$-jets is defined to be the vector space of degree $k$ polynomials $(\C^n,0)\to (\C^p,0)$. That is, we have
\[ J^k(n,p)=\bigoplus_{i=1}^k\Hom(\Sym^i\C^n,\C^p), \] where $\Sym^i\C^n$ is the $i$\textsuperscript{th} symmetric power of the vector space $\C^n$. Let $J^k(n)=J^k(n,1)$. The map $\E_0(n,p)\to J^k(n,p)$, defined by taking the degree $k$ Taylor
polynomial at 0, will be denoted by $j^k$. The space $J^k(n)$ is an algebra (without identity) with multiplication $h_1\cdot h_2=j^k(h_1\cdot h_2)$. The $j^k$-image (`$k$-jets') of elements in $\mathcal{R}(n)$ form a group $\mathcal{R}^k(n)$. The group $\mathcal{R}^k(p)$ will also be denoted by $\mathcal{L}^k(p)$. The group $\mathcal{R}^k(n)$ acts on the algebra $J^k(n)$ by \[ \alpha \cdot h = j^k( h \circ \alpha^{-1}) \qquad\qquad \big(\alpha\in \mathcal{R}^k(n), h\in J^k(n)\big). \]
Hence the group $\mathcal{R}^k(n)$ also acts on the set of ideals of $J^k(n)$. Similarly we can define the group $\mathcal{K}^k=\mathcal{K}^k(n,p)$ acting on the vector space $J^k(n,p)$.

Let $h\in J^k(n,p)$. The ideal in $J^k(n)$, generated by the coordinate functions of $h$, will be denoted by $I_h$. We call $Q_h=J^k(n)/I_h$ the nilpotent algebra of the jet $h$. Two $k$-jets are defined to be contact equivalent, if their ideals are in the same $\mathcal{R}^k(n)$-orbit.

\begin{proposition}\label{qiso} \cite[Thm 2.9, 2.1]{mather4} Two $k$-jets are contact equivalent if and only if they are in the same $\mathcal{K}^k$-orbit if and only if their nilpotent algebras are isomorphic.
\end{proposition}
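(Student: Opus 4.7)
The plan is to adapt Mather's proof of Theorem~\ref{isoq-germ} from germs to $k$-jets; the structure is identical, with the algebra $\E_0(n)$ replaced by the nilpotent algebra $J^k(n)$. I would first establish the equivalence between contact equivalence (ideals in the same $\mathcal{R}^k(n)$-orbit) and $\mathcal{K}^k$-orbit equivalence. One direction is immediate from the definition of the action $(h,M)\cdot g = j^k(M\cdot(g\circ h^{-1}))$: the components of $(h,M)\cdot g$ are $M$-linear combinations of the components of $g\circ h^{-1}$, and since $M(0)\in\GL(p)$ this invertible change of basis preserves the generated ideal, yielding $I_{(h,M)\cdot g}=h\cdot I_g$.

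For the reverse direction, if $h\cdot I_g = I_f$, then $\{f_j\}$ and $\{g_i\circ h^{-1}\}$ are two generating sets of the same ideal in the nilpotent algebra $J^k(n)$. A standard Nakayama-style argument (using that the quotient algebras are finite-dimensional and $J^k(n)$ is itself nilpotent) shows that any two minimal generating sets of the same cardinality are related by an invertible matrix with entries in $J^k(n)\oplus\C$; this matrix supplies the required $M$ realizing the $\mathcal{K}^k$-equivalence.

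Next I would establish the equivalence between $\mathcal{K}^k$-orbits and isomorphism classes of nilpotent algebras. The easy direction: the substitution $\alpha(u)=j^k(u\circ h^{-1})$ is an algebra automorphism of $J^k(n)$ carrying $I_g$ to $I_{g\circ h^{-1}} = I_{(h,M)\cdot g}$, hence descends to an algebra isomorphism $Q_g \to Q_{(h,M)\cdot g}$. For the nontrivial direction, given an isomorphism $\phi\colon Q_f\to Q_g$, I mimic the lift sketched after Theorem~\ref{isoq-germ}: pick representatives $p_i\in J^k(n)$ of $\phi([x_i])\in Q_g$ and set $h=(p_1,\dots,p_n)$. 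The aim is to choose the $p_i$ so that $h\in\mathcal{R}^k(n)$, i.e.\ so that the linear parts $L(p_i)$ form a basis of $\C^n = J^1(n)$; once achieved, the $\C$-algebra automorphism $\alpha$ of $J^k(n)$ induced by $h$ satisfies $\alpha(I_f) = I_g$ by construction.

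The main obstacle is precisely this last step. Since $\phi$ is an isomorphism, the classes $[p_i]$ generate $Q_g$, so by Nakayama the images of $L(p_i)$ in $\C^n/L(I_g)=Q_g/Q_g^2$ span that quotient, and hence the $L(p_i)$ together with a basis of $L(I_g)\subset\C^n$ span $\C^n$. Replacing each $p_i$ by $p_i+q_i$ with $q_i\in I_g$ leaves $[p_i]\in Q_g$ unchanged but alters $L(p_i)$ by an arbitrary element of $L(I_g)$. A direct linear-algebra manipulation --- single out a subfamily of the $p_i$'s whose images already form a basis of $Q_g/Q_g^2$, then use the freedom on the remaining lifts to fill out a complement along $L(I_g)$ --- makes the $L(p_i)$ linearly independent. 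The resulting $h$ lies in $\mathcal{R}^k(n)$, and combining with the first equivalence finishes the proof.
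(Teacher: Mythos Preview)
Your proposal is correct and follows essentially the same approach as the paper: the paper's entire proof is the single sentence ``The proof is the same as of Proposition~\ref{isoq-germ}'', and you have carried out precisely that adaptation, filling in the Nakayama-type details (for producing the matrix $M$ and for making the linear parts $L(p_i)$ independent) that the paper leaves as ``it is easy to check''.
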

The proof is the same as of Proposition \ref{isoq-germ}.

Next we define invariants of germs and jets. The dimension of the quotient algebra  $\mu(f):=\dim(Q_f)$ of a finite germ (or $k$-jet) $f$ plays a crucial role in our study.

We say that $f$ has depth $d$ ($\depth(f)=d$) if $d$ is the smallest $i$ for which $\E_0(n)^{i+1}\subset I_f$ (or $(J^k(n))^{i+1}\subset I_f$ for the $k$-jet case). It is an application of the Nakayama lemma that $\depth(f)\leq \mu(f)$.

\begin{definition} A germ  $f\in \E_0(n,p)$ is $k$-determined if for every $g\in \E_0(n,p)$ the germ $g$ is contact equivalent to $f$ if their $k$-jets are equal.
\end{definition}

Our main objects---the finite germs---are finitely determined due to the following
\begin{theorem}\cite{gaffney:phd},\cite[Thm 1.2]{wall:finitedet} Any finite germ (or $k$-jet) $f$ is $\depth(f)+1$-determined.
\end{theorem}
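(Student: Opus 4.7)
The plan is a direct Nakayama argument that actually yields the much stronger conclusion $I_f=I_g$ for any germ $g$ with $j^{d+1}g=j^{d+1}f$, where $d=\depth(f)$. Once the two ideals coincide, contact equivalence is immediate from Definition~\ref{contactequivalent} by taking $h=\mathrm{id}\in\mathcal{R}(n)$, since then $hI_g=I_f$.

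First I would translate both hypotheses into a single ideal-theoretic inclusion. The depth condition reads $\mathcal{M}_n^{d+1}\subset I_f$, hence $\mathcal{M}_n^{d+2}\subset \mathcal{M}_n\cdot I_f$. The hypothesis $j^{d+1}g=j^{d+1}f$ means componentwise that $g_i-f_i\in \mathcal{M}_n^{d+2}$. Combining gives the key containment
$$g_i-f_i\ \in\ \mathcal{M}_n\cdot I_f \qquad (i=1,\ldots,p).$$

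Next I would derive the two one-sided ideal containments needed for Nakayama. Each generator $g_i=f_i+(g_i-f_i)$ of $I_g$ lies in $I_f+\mathcal{M}_n I_f=I_f$, so $I_g\subset I_f$. Symmetrically $f_i=g_i-(g_i-f_i)\in I_g+\mathcal{M}_n I_f$, so $I_f\subset I_g+\mathcal{M}_n I_f$. Because $\mathcal{E}(n)$ is Noetherian, $I_f$ is finitely generated, and so is the quotient module $I_f/I_g$. The second containment exactly says $I_f/I_g=\mathcal{M}_n(I_f/I_g)$, and Nakayama's lemma over the local ring $\mathcal{E}(n)$ forces $I_f/I_g=0$, i.e.\ $I_f=I_g$. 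For the $k$-jet version I would run the same argument inside the Artinian local algebra $J^k(n)+\mathbb{C}$ with maximal ideal $J^k(n)$, where Nakayama's lemma applies with no finite-generation subtlety.

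There is not really a serious obstacle, only a small conceptual point to double-check: finite determinacy is a priori a statement about $\mathcal{K}$-orbits of germs, whereas the proof above establishes the apparently stronger equality of ideals. This is exactly what we want, however, because Definition~\ref{contactequivalent} declares two germs contact equivalent precisely when their ideals lie in the same $\mathcal{R}(n)$-orbit, and here they are literally the same ideal. The one place where honest care is needed is ensuring that the hypotheses are combined in the correct order, namely that $\mathcal{M}_n^{d+2}$ (not $\mathcal{M}_n^{d+1}$) is the relevant power, so that the extra factor of $\mathcal{M}_n$ lets Nakayama close the argument; with a $d$-jet hypothesis the method would break down, which is why the sharp bound is $\depth(f)+1$ rather than $\depth(f)$.
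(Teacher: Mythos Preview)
Your argument is correct. The paper does not actually supply a proof of this theorem: it is quoted from the literature (Gaffney's thesis and Wall's survey), so there is no ``paper's own proof'' to compare against. Your Nakayama argument is the standard one and is carried out cleanly; in particular the bookkeeping with $\mathcal{M}_n^{d+2}\subset \mathcal{M}_n I_f$ versus $\mathcal{M}_n^{d+1}\subset I_f$ is exactly the point, and your closing remark correctly isolates why the exponent cannot be lowered to $d$.

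It is worth noting that the same ideal-theoretic mechanism appears later in the paper, in the proof of Theorem~\ref{k}: there one only needs the one-sided containment $I_f\subset I_g$ (from $j^k f=j^k g$ and $(J)^{k+1}\subset I_g$) to conclude $f\in\overline{\mathcal{K}^l g}$ via Lemma~\ref{ei}. Your argument goes further and gets the full equality $I_f=I_g$ by running Nakayama on the quotient $I_f/I_g$; this stronger conclusion is what makes the determinacy statement hold on the nose rather than only up to closure.
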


 Using the observation that
 \begin{equation}\label{eq:jet-of-k} j^k(H(g))=j^k(H)(j^k(g)) \end{equation}
for any $H\in \mathcal{K}(n,p), g\in \E_0(n,p)$) we immediately get the following
\begin{proposition}\label{reduction} Suppose that $f\in\E_0(n,p)$ is $k$-determined. Then $f$ is contact equivalent to $g$ if and only if $j^kf$ is contact equivalent to $j^kg$.
\end{proposition}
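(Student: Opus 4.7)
The statement splits naturally into two implications, and only one of them carries substance; the proof can be read off directly from the compatibility identity (\ref{eq:jet-of-k}) together with the definition of $k$-determinacy. The plan is to dispatch the ``only if'' direction from (\ref{eq:jet-of-k}) in one line, and then reduce the ``if'' direction to a single application of $k$-determinacy after lifting a jet-level contact equivalence to a germ-level one.

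For the forward direction, assume $g = H(f)$ for some $H\in\mathcal{K}(n,p)$. Applying $j^k$ and invoking (\ref{eq:jet-of-k}) gives $j^kg = j^k(H)(j^kf)$; since $j^k(H)\in\mathcal{K}^k$ by definition of $\mathcal{K}^k$ as the image of $\mathcal{K}$ under $k$-truncation, this immediately exhibits $j^kf$ and $j^kg$ as lying in the same $\mathcal{K}^k$-orbit, which is contact equivalence of jets by Proposition~\ref{qiso}.

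For the reverse direction, suppose $\bar H(j^kf) = j^kg$ for some $\bar H \in \mathcal{K}^k$. The key point is that the truncation map $\mathcal{K}(n,p)\to\mathcal{K}^k(n,p)$ is surjective by the very definition of $\mathcal{K}^k$, so we may pick a lift $H\in\mathcal{K}(n,p)$ with $j^k(H)=\bar H$. Now consider the germ $H^{-1}(g)\in \E_0(n,p)$: applying $j^k$ and using (\ref{eq:jet-of-k}) twice (together with $j^k(H^{-1}\circ H) = \mathrm{id}$ in $\mathcal{K}^k$) yields
\[
  j^k\bigl(H^{-1}(g)\bigr) \;=\; j^k(H^{-1})\bigl(j^kg\bigr) \;=\; j^k(H^{-1})\bigl(\bar H(j^kf)\bigr) \;=\; j^kf.
\]
Thus $H^{-1}(g)$ and $f$ have the same $k$-jet. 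Since $f$ is assumed to be $k$-determined, this forces $H^{-1}(g)$ to be contact equivalent to $f$, and therefore $g = H(H^{-1}(g))$ is contact equivalent to $f$ as well.

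The only conceptual point that needs care is the lift $\bar H\mapsto H$, which is why it seems essential to record explicitly that $\mathcal{K}^k$ is defined as the $k$-jet image of $\mathcal{K}$ (so the surjectivity is tautological). Once that is observed, the whole argument is a formal manipulation of (\ref{eq:jet-of-k}) and the definition of $k$-determinacy, with no further analytic or algebraic input required; in particular the proof works for $n\leq p$ without any adjustment.
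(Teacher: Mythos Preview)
Your proof is correct and is essentially the argument the paper has in mind: the paper states that the proposition follows ``immediately'' from the compatibility identity (\ref{eq:jet-of-k}), and your write-up is precisely the unpacking of that remark, using the surjectivity of $j^k:\mathcal{K}\to\mathcal{K}^k$ and the definition of $k$-determinacy.
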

In fact, the previous statements also imply that if $k\geq\depth(f)+1$ then $Q_f\iso Q_{j^kf}$.

\smallskip

Now we can give the  degeneracy locus description of the singularity set $\eta(F)$ promised in Section \ref{sec:tp}. Given a map $F:N^n \to P^p$ between manifolds, and a positive integer $k$, we construct a fiber bundle
\begin{equation}\label{fibration}\phi_F:\{(x,h): x\in N,\ h\
               \text{is the $k$-jet of a germ $(N,x)\to(P,F(x))$}\}\ \to\ N
\end{equation}
     $$(x,h)\mapsto x,$$
with fiber $J^k(n,p)$. For $k=1$ this is the vector bundle Hom$(TN,F^*TP)$. In general the fiber is a vector space, but the structure group (the left-right group $\mathcal{R}^k(n)\times\mathcal{L}^k(p)$ acting on $J^k(n,p)$ by composition on the two sides) does not act linearly. The bundle $\phi_F$ has a natural section
\begin{equation}\label{jetsection}j^kF: x \mapsto (x,j^k(\text{germ of $F$ at $x$})).\end{equation}

Now let $g\in \E_0(n,p)$ be a $k$-determined germ, let $\eta=\mathcal{K}g$ be its contact equivalence class, and let $\eta^k$ be the contact equivalence class of $j^kg\in J^k(n,p)$. Since $\eta^k$ is  $\mathcal{R}^k(n)\times\mathcal{L}^k(p)$-invariant, it defines a degeneracy locus in the sense of Definition \ref{def:deg-locus}. Corollary \ref{reduction} implies that we have the following degeneracy locus description of the $\eta$ singularity subset of a map $F$
\begin{equation}\label{deg_locus}  \eta(F) =  \eta^k(j^k(F)).\end{equation}

\subsection{Definition of the Thom polynomial}

Now we are ready to define the Thom polynomial of a contact singularity. Let $g\in \E_0(n,p)$ be a $k$-determined finite germ. Let $\eta^k\subset J^k(n,p)$ be the closure of $\mathcal{K}^kj^kg$. Notice that $\mathcal{K}^k$ is a connected algebraic group acting algebraically, so the orbit-closure is the same in the Zariski and the metric topology. Connectedness implies that the closure of the orbit is irreducible.

%Recall that $\mathcal{R}^k(n)\times \mathcal{L}^k(p)$ acts on $J^k(n,p)$ leaving $\eta^k$ invariant.
\begin{definition} The Thom polynomial $\Tp(g)$ of the $k$-determined finite germ $g\in \E_0(n,p)$ (or a $k$-jet) is defined to be the class represented by $\eta^k$ in the $\mathcal{R}^k(n)\times \mathcal{L}^k(p)$-equivariant cohomology of $J^k(n,p)$.
\end{definition}
Since the contact class of $g$ depends only on the quotient algebra of $g$, we will also use the notation $\Tp_Q(n,p):=\Tp(g)$ for any $g\in J^k(n,p)$ with $Q_g\iso Q$.

$J^k(n,p)$ is a vector space (hence contractible), and $\mathcal{R}^k(n)\times \mathcal{L}^k(p)$ is homotopy equivalent to $\GL(n)\times \GL(p)$, therefore we have
\[ \Tp(g)=[\eta^k \subset J^k(n,p)]_{\mathcal{R}^k(n)\times \mathcal{L}^k(p)} =
       [\eta^k \subset J^k(n,p)]_{\GL(n)\times \GL(p)} \in H^*\big(B(\GL(n)\times \GL(p))\big). \]

The degree of the Thom polynomial is the codimension of $\eta^k$ in $J^k(n,p)$.  We will also refer to this degree as the {\em codimension of the germ $g$}.

The cohomology ring $H^*\big(B(\GL(n)\times \GL(p))\big)$ is a polynomial ring generated by the universal Chern classes $a_1,\ldots,a_n, b_1,\ldots,b_p$ of the groups $\GL(n)$, $\GL(p)$, hence the Thom polynomial is indeed a polynomial.
% Moreover $\Tp(\eta)$ can be evaluated if two sets of cohomology classes $a_i$ ($i=1,\ldots n$),
%and $b_j$ ($j=1,\ldots,p$) are given.

The meaning of the Thom polynomial is enlightened by putting together expression (\ref{deg_locus}) with Definition~\ref{equi_dual}. We obtain the following

\begin{proposition}\label{tp} Let $g\in \E_0(n,p)$ be a $k$-determined germ, $\eta^k$ the closure of $\mathcal{K}^k(j^kg)$ in $J^k(n,p)$, and let $F:N^n\to P^p$ be a map between compact complex manifolds. Suppose that the section $j^kF$ (see~(\ref{jetsection})) is transversal to $\eta^k(\phi_F)$---the  $\eta^k$-points of the fibration~(\ref{fibration}). Then the cohomology class $[\overline{\eta(F)}\subset N]$ represented by the $\eta$-points---where $\eta=\mathcal Kg$---of the map $F$ is equal to the Thom polynomial of $g$ evaluated at the Chern classes of $TN$ and $F^*TP$.
\end{proposition}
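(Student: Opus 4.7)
The plan is to realize the proposition as a direct consequence of Corollary~\ref{tp_deg_locus} combined with the degeneracy locus description~(\ref{deg_locus}). The content that requires verification is the identification of the fiber bundle $\phi_F$ of~(\ref{fibration}) as an associated bundle $P\times_G J^k(n,p)$ where $G=\mathcal{R}^k(n)\times \mathcal{L}^k(p)$, and the identification of the classifying map of $P$ with the map that pulls the universal Chern classes back to $c_i(TN)$ and $c_i(F^*TP)$.

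First I would construct the principal bundle $P$ explicitly. Fix the linear models $(\C^n,0)$ and $(\C^p,0)$. A point of $P$ over $x\in N$ is a pair consisting of a $k$-jet of a local biholomorphism $(\C^n,0)\to (N,x)$ and a $k$-jet of a local biholomorphism $(\C^p,0)\to (P,F(x))$. The group $G$ acts freely and transitively on the fibers by precomposition and postcomposition, and the associated bundle $P\times_G J^k(n,p)$ is canonically isomorphic to $\phi_F$ because any $k$-jet of a germ $(N,x)\to(P,F(x))$ can be read in an arbitrarily chosen pair of frames. Since $\mathcal{R}^k(n)$ deformation retracts onto its linear part $\GL(n)$ by rescaling higher-order terms, and similarly for $\mathcal{L}^k(p)$, the classifying space $BG$ is homotopy equivalent to $B\GL(n)\times B\GL(p)$, and under this equivalence the classifying map $k\colon N\to BG$ of $P$ represents the ordered pair of bundles $(TN,F^*TP)$. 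Consequently $k^*a_i=c_i(TN)$ and $k^*b_j=c_j(F^*TP)$ for the universal Chern classes $a_i,b_j$ generating $H^*(B\GL(n)\times B\GL(p))$.

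With this identification in place the proposition follows at once. By~(\ref{deg_locus}) we have $\eta(F)=\eta^k(j^kF)$, so also $\overline{\eta(F)}=\eta^k(j^kF)$ since $\eta^k$ is closed. The section $j^kF$ is assumed transversal to $\eta^k(\phi_F)=P\times_G \eta^k$, and $\eta^k$ is an irreducible $G$-invariant subvariety (being the closure of a single orbit of a connected algebraic group), so Theorem~\ref{equi_dual} gives a well-defined equivariant Poincar\'e dual $[\eta^k]_G=\Tp(g)\in H^*_G(J^k(n,p))\cong H^*(BG)$. Corollary~\ref{tp_deg_locus} then yields
\[
 [\,\overline{\eta(F)}\subset N\,]=[\eta^k(j^kF)\subset N]=k^*[\eta^k]_G=k^*\Tp(g),
\]
and the calculation of $k^*$ on generators from the previous paragraph shows that $k^*\Tp(g)$ is precisely $\Tp(g)$ evaluated at the Chern classes of $TN$ and $F^*TP$.

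The only step that needs care---hence the main obstacle---is the verification that the jet frame bundle $P$ has the structure group $G$ reducing (up to homotopy) to $\GL(n)\times\GL(p)$ in such a way that the classifying map genuinely represents $(TN,F^*TP)$; once this is pinned down the rest of the argument is purely formal and relies only on results already recorded in Sections~\ref{sec:pd} and~\ref{sec:jetapprox}.
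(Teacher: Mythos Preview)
Your proposal is correct and follows the same route the paper intends: the paper presents Proposition~\ref{tp} as an immediate consequence of combining the degeneracy locus description~(\ref{deg_locus}) with the equivariant Poincar\'e dual framework (Theorem~\ref{equi_dual} and its Corollary~\ref{tp_deg_locus}), without spelling out a separate proof. You have simply made explicit the one step the paper leaves to the reader, namely the identification of the jet bundle $\phi_F$ as an associated bundle to a principal $\mathcal{R}^k(n)\times\mathcal{L}^k(p)$-bundle whose classifying map, after the homotopy reduction to $\GL(n)\times\GL(p)$ already noted in the paper, represents $(TN,F^*TP)$.
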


We have not included the letter $k$ in the notation $\Tp(g)$, since as we will show in Section \ref{sec:k} that the Thom polynomial does not depend on the choice of $k$, as long as $g$ is $k$-determined. Observe that Proposition \ref{tp} proves this statement, provided there are sufficiently many maps $F$ satisfying the conditions of the Proposition. We will use a different approach in Section \ref{sec:k}.

\section{Examples, known results}\label{sec:exa}

 Suppose that a complex, commutative, finite dimensional, local algebra $\mathcal{Q}$ is given.  Then there exists a contact singularity in $\E_0(n,p)$ with local algebra $\mathcal{Q}$ for each $n$, and $p$ if $n$ and $p-n$ are large enough. A general Thom polynomial is a formula (containing $n$, and $p$ as parameters) expressing the Thom polynomial of all these singularities together.
For example, the Thom polynomial of a singularity in $\E_0(n,p)$ with local algebra  $\mathcal{Q}=\C[[x]]/(x^3)$ is
\[c_{l+1}^2+\sum_{i=1}^\infty 2^{i-1}c_{l+1-i}c_{l+1+i},\]
where $l=p-n$, and the classes $c_i$ are defined by
\begin{equation}\label{quotient_vars}1+c_1t+c_2t^2+\ldots=\frac{1+b_1t+b_2t^2+\ldots+b_pt^p}{1+a_1t+a_2t^2+\ldots+a_nt^n},\end{equation}
and the conventions $c_0=1$, $c_{<0}=0$. Using Schur polynomials
\begin{equation}\label{schurdef}
\Delta_{\lambda_1\geq \lambda_2\geq \ldots\geq \lambda_r}:=\det \left( c_{\lambda_i+j-i} \right)_{i,j=1,\ldots,r}
\end{equation}
we can further write the Thom polynomial in the form
$$\Delta_{l+1,l+1}+2\Delta_{l+2,l}+4\Delta_{l+3,l-1}+\ldots.$$
This example displays four important properties:
 \begin{itemize}
  \item the Thom polynomial can be expressed in the ``quotient variables'' (\ref{quotient_vars});
  \item when the Thom polynomial is expressed in quotient variables, then the dependence on $p$ and $n$ is only through $l=p-n$;
  \item if the general Thom polynomial is expressed in quotient variables, and the indexes are shifted by $l+1$ (i.e.
           substituting $d_i=c_{l+1+i}$), the expression does not depend on $l$ either;
  \item the coefficients of Thom polynomials in the basis of Schur polynomials are non-negative.
 \end{itemize}
The general Thom polynomial after shifting the indices by substituting $d_i=c_{l+1+i}$ is called the Thom series of the local algebra $\mathcal{Q}$, and is denoted by $\ts_\mathcal{Q}$. For example
  \[  \ts_{\C[[x]]/(x^3)}=d_0^2+d_{-1}d_1+2d_{-2}d_2+4d_{-3}d_3+\ldots.  \]
Alternatively we can work with nilpotent algebras. We will also use the notation $\ts_Q$ for $Q$ being a nilpotent algebra.

All four properties above hold in general. The first three we will prove in Section \ref{sec:stability}. The first two are classical facts, we will call them the {\em Thom-Damon-Ronga theorem}, the third (in a special case) is a theorem from \cite{dstab}. The fourth property was recently proved in \cite{pragacz:positivity}.

\smallskip

Several individual Thom polynomials are known for small values of $l$ (see e.g. \cite{rrtp}, \cite{kazamulti}), but hardly any general Thom polynomials, i.e. Thom series are known. Here is a complete list of local algebras whose Thom series is known:
   \begin{itemize}
     \item $\Sigma^n=\C[[x_1,\ldots,x_n]]/\M_n^2$ (Giambelli-Thom-Porteous formula)
     \item $A_2$ \cite{rongaij}
     \item $A_3$ \cite{a3} (announced), \cite{pragacz} (sketched), \cite{bsz06}, \cite{lascoux-pragacz} (proved)
     \item $A_4,A_5,A_6$ \cite{bsz06}
     \item The Thom-Boardman classes $\Sigma^{n,1}$ \cite{kf}
     \item $I_{2,2}$ \cite{dstab, pragacz:i22, kaza:gysin}.
   \end{itemize}
Here and in what follows we use standard notations of singularity theory, as follows. $A_i=\C[[x]]/(x^{i+1})$,
$I_{a,b}=\C[[x,y]]/(xy,x^a+y^b)$, $III_{a,b}=\C[[x,y]]/(xy,x^a,y^b)$. For Thom series of Thom-Boardman classes see also Section \ref{sec:smallp}.

Below we will develop a method to calculate general Thom polynomials. It leads to formulas where the stability properties are not apparent, as these formulas are given in Chern roots. In Section \ref{sec:quotientchern} we show how to find formulas in quotient variables. In Section \ref{sec:generatingfn} we explore even more compact descriptions in terms of generating functions.

\section{A partial resolution}\label{sec:resolution}

In this section we introduce the key geometric idea leading to our cohomological localization formula. We present a partial resolution (i.e. a birational map of varieties) of contact invariant subvarieties of $J^k(n,p)$, in particular of closures of contact singularities. The construction is originally due to J.~Damon~\cite{damonphd}. Similar ideas are present in works of J.~Mather on Thom-Boardman singularities~\cite{mathertb}. The idea is that a contact invariant subvariety of $J^k(n,p)$ is the union of large linear subspaces.

Let $m$ be a nonnegative integer, and let $\gr^m=\gr^m(J^k(n))$ be the Grassmannian of $m$-codimensional linear subspaces of $J^k(n)$.

\begin{definition} \label{def:correspondence} Let $Y\subset \gr^m$ be a subvariety and fix $p\geq 1$. The {\em correspondence variety} of $Y$ is
  \[  C(Y)=\{(I,g)\in\gr^m\times J^k(n,p)\ |\ I\in Y, I_g\subset I\}.  \]
\end{definition}
We have now the following diagram
\begin{equation} \label{damondiag}
\xymatrix{C(Y)  \ar@{^{(}->}@<-3pt>[rr]^{i}  \ar[d] &  & \gr^m\times
J^k(n,p) \ar[d]^{\pi_1}\ar[r]^{\ \ \ \pi_2} & J^k(n,p) \\
Y\ar@{^{(}->}@<-3pt>[rr]& & \gr^m, & } \end{equation} where $\pi_1$,
$\pi_2$, $i$ are the obvious projections and imbedding.
 The projection $C(Y)\to Y$ makes $C(Y)$ a vector bundle with fiber $C_I=I\otimes\C^p\subset J^k(n)\otimes\C^p=J^k(n,p)$.

\begin{proposition}\label{birat} Let $g \in J^k(n,p)$ with $\mu(g):=\codim I_g=m$. Let $Y$ be $\overline{\mathcal{R}I_g}\subset \gr^m$ for  $\mathcal{R}=\mathcal{R}^k(n)$.  Then
\[\phi=\pi_2\circ i:C(\overline{\mathcal{R}I_g})\to \overline{\mathcal{K}g}\]
is a birational map.
\end{proposition}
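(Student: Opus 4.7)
The plan is to exhibit an open dense subset $U\subset C(Y)$, where $Y:=\overline{\mathcal{R}I_g}$, on which $\phi$ restricts to an isomorphism onto an open dense subset of $\overline{\mathcal{K}g}$; this is exactly what birationality requires.

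First I would record the basic structural facts. Since $\mathcal{R}=\mathcal{R}^k(n)$ is connected, $Y$ is irreducible, and the projection $\pi_1:C(Y)\to Y$ is an algebraic vector bundle whose fiber over $I$ is the linear subspace $I\otimes\C^p\subset J^k(n,p)$ of dimension $p(\dim J^k(n)-m)$. Hence $C(Y)$ is irreducible of dimension $\dim Y+p(\dim J^k(n)-m)$. On the other side, the assignment $g'\mapsto I_{g'}$ sends $\mathcal{K}g$ onto $\mathcal{R}I_g$, and its fiber over $I\in\mathcal{R}I_g$ is the locus in $I\otimes\C^p$ of $p$-tuples that actually generate $I$ as an ideal---a dense open subset of $I\otimes\C^p$. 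Consequently $\dim\overline{\mathcal{K}g}=\dim C(Y)$, the dimension check needed for birationality.

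Next I would set
\[
U=\bigl\{(I,g')\in C(Y)\ :\ I\in\mathcal{R}I_g\text{ and }I_{g'}=I\bigr\}
\]
and verify it is open, non-empty, and dense. Openness rests on two ingredients: the orbit $\mathcal{R}I_g$ is open in its closure $Y$ (standard for orbits of connected algebraic groups), and the condition that the components of $g'$ generate the full ideal $I$ is open in the fiber $I\otimes\C^p$ (non-vanishing of certain maximal minors). Non-emptiness is witnessed by $(I_g,g)\in U$, and density then follows from the irreducibility of $C(Y)$. By Mather's characterization (Proposition~\ref{qiso}), for $(I,g')\in U$ we have $I_{g'}\in\mathcal{R}I_g$, so $g'\in\mathcal{K}g$; conversely, any $g'\in\mathcal{K}g$ satisfies $\mu(g')=m$, so $(I_{g'},g')\in U$. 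Therefore $\phi|_U:U\to\mathcal{K}g$ is an algebraic bijection with algebraic inverse $g'\mapsto(I_{g'},g')$, hence an isomorphism. Since $U$ is dense in $C(Y)$ and $\phi$ is regular, $\phi(C(Y))\subset\overline{\phi(U)}=\overline{\mathcal{K}g}$, and $\phi$ is dominant onto $\overline{\mathcal{K}g}$ and an isomorphism on the dense open $U$.

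The main obstacle I anticipate is making the openness claims for $U$ fully rigorous, together with the related identification of the fiber of $\mathcal{K}g\to\mathcal{R}I_g$ over $I$ with the generating-tuple locus in $I\otimes\C^p$. The latter requires the Nakayama-type statement that any two $p$-tuples generating the same ideal in $J^k(n)$ are related by a $GL(p)$-valued germ; without it, the $\mathcal{K}$-orbit description and the ideal-orbit description would not match cleanly and the dimension count in the first step would not translate into birationality.
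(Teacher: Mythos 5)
Your proof is correct and follows essentially the same route as the paper: your set $U$ is exactly the paper's $\tilde{\mathcal{K}}g=\{(I_h,h):h\in\mathcal{K}g\}$, and your two openness ingredients (openness of the orbit $\mathcal{R}I_g$ in its closure and openness of the generating condition in $I\otimes\C^p$, the paper's Lemma~\ref{ei}) are precisely how the paper shows this set is open, hence dense, in the irreducible $C(\overline{\mathcal{R}I_g})$. The extra dimension count is harmless but not needed, and the Nakayama-type matching you worry about is supplied by the paper's ideal-theoretic definition of contact equivalence together with Mather's theorem.
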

\begin{proof} For $\tilde{\mathcal{K}}g:=\{(I_h,h):h\in \mathcal{K}g\}$ we see that $\phi|_{\tilde{\mathcal{K}}g}:\tilde{\mathcal{K}}g\to \mathcal{K}g$ is a bijection, so it is enough to show that $\tilde{\mathcal{K}}g\subset C(\overline{\mathcal{R}I_g})$ is open (and therefore dense since $C(\overline{\mathcal{R}I_g})$ is irreducible). For this it is enough to show that $\tilde{\mathcal{K}}g$ intersected with a fiber is open in the fiber; hence we need the following lemma.

\begin{lemma}\label{ei}For any jet $g \in J^k(n,p)$ the set $A_g:=\{h\in I_g\otimes\C^p:I_h=I_g\}$ is Zariski open in $I_g\otimes\C^p$.\end{lemma}

 \begin{proof} Let $h=(h_1,\ldots,h_p)\in I_g\otimes\C^p$, and let $a_i^j$ be the coefficients of $h_i$ in some linear
basis of $I_g$. The property that the $h_i$'s generate $I_g$ as an ideal is equivalent to the property that an appropriate matrix, whose entries are linear functions of the $a_i^j$'s, has full rank. Therefore, the property that the $h_i$'s generate $I_g$ cuts out a Zariski open subset.
\end{proof}
This finishes the proof of Proposition \ref{birat}.
\end{proof}
Using the Gysin (or pushforward) map $\phi_*$ we have that $\phi_*(1)=[\tilde{\mathcal{K}}g]=\Tp(g)$. Details on the properties of the equivariant Gysin map can be found in \cite{fulton:eq}. We calculate the Gysin map using localization in the next section.

\section{Singular-base equivariant localization}\label{sec:localization}

In this section we recall a version of the Berline-Vergne-Atiyah-Bott equivariant localization formula, due to B\'erczi and Szenes. For completeness we give a proof. This version presents the `localization' of an equivariant cohomology class on the total space of a vector bundle over a compact singular base space.

Let $V$ be a vector space. Suppose that $M$ is a compact algebraic manifold, and $Y\subset M$ a subvariety. Let $E \to Y$ be a sub-vector bundle of the trivial bundle $M\times V \to M$ restricted to $Y$. Let $\pi_2: M\times V \to V$ be the projection, $i:E \subset M\times V$ the embedding, and $\phi=\pi_2\circ i$, as in the diagram
  $$\xymatrix{E  \ar@{^{(}->}@<-3pt>[rr]^{i} \ar@/^2pc/[rrr]^{\phi} \ar[d] &  & M\times
                V \ar[d]\ar[r]^{\pi_2} & V \\ Y\ar@{^{(}->}@<-3pt>[rr]&  & M. & } $$
Recall that for $A\subset B$, by $[A]$ or $[A\subset B]$ we mean the cohomology class represented by $A$ in the cohomology of $B$.

\begin{proposition} \cite[(3.8)]{bsz06} \label{31} Suppose that the torus $T$ acts on all spaces in the diagram above, and that all maps are $T$-equivariant. Assume that the fixed point set $F(M)$ of the $T$-action on $M$ is finite.
Then for the push-forward map $\phi_*:H_T^*(E)\to H_T^*(V)$ we have
\begin{equation} \label{singbase}
\phi_*(1) =\sum_{f\in F(M)}\frac{[Y\subset M]|_f\cdot [E_f\subset V]}{e(T_fM)}=
           \sum_{f\in F(Y)}\frac{[Y\subset M]|_f\cdot [E_f\subset V]}{e(T_fM)}.
\end{equation}
Consequently, if $\phi$ is birational to its image, then the right hand side of (\ref{singbase}) is equal to $[\phi(E)]\in H_T^*(V)$.
\end{proposition}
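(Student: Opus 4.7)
The plan is to factor $\phi = \pi_2 \circ i$ and apply equivariant Atiyah--Bott--Berline--Vergne localization, in its fiberwise form, to the proper projection $\pi_2 \colon M \times V \to V$. By functoriality of the Gysin pushforward, $\phi_*(1) = (\pi_2)_*\bigl(i_*(1)\bigr) = (\pi_2)_*\bigl([E \subset M\times V]\bigr)$, since for the proper closed embedding $i$, the pushforward $i_*(1)$ is by definition the equivariant Poincar\'e dual of the image.

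Next, I would invoke equivariant fiberwise localization for $(\pi_2)_*$. The map $\pi_2$ is $T$-equivariant with compact fiber $M$; after identifying $H^*_T(V)$ with $H^*(BT)$, the $T$-fixed slices in $M \times V$ are $\{f\}\times V$ for $f \in F(M)$, each with normal bundle $T_fM$ of Euler class $e(T_fM)$. The ABBV formula applied fiberwise gives
\[
(\pi_2)_*(\alpha) \;=\; \sum_{f \in F(M)} \frac{\alpha|_{\{f\}\times V}}{e(T_fM)}
\qquad\text{for all } \alpha \in H^*_T(M\times V),
\]
and specializing to $\alpha = [E \subset M\times V]$ produces a formula of the shape of \eqref{singbase}.

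The heart of the proof is identifying the restriction
\[
[E \subset M\times V]|_{\{f\}\times V} \;=\; [Y \subset M]|_f \cdot [E_f \subset V]
\qquad \text{for } f \in F(Y),
\]
and showing that this restriction vanishes for $f \in F(M)\setminus F(Y)$. For the vanishing, if $f \notin Y$ then a $T$-invariant neighborhood of $\{f\}\times V$ in $M\times V$ is disjoint from $E$, so its equivariant Poincar\'e dual, being supported on $E$, restricts to zero; this matches the right-hand side under the convention that the factor $[E_f \subset V]$ is absent. For $f \in F(Y)$, since $V$ is $T$-equivariantly contractible, the restriction to $\{f\}\times V$ agrees with the restriction to the $T$-fixed point $(f,0) \in E$. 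Locally near $(f,0)$ the subvariety $E \subset M\times V$ is a fiber bundle over $Y$ with fiber the linear subspace $E_f \subset V$, so its tangent cone in $T_fM \oplus V$ is the product $C_fY \times E_f$ of the tangent cone of $Y$ at $f$ with $E_f$. The equivariant Poincar\'e dual of a product (in a product of ambient spaces) factors as a product, giving $[C_fY \subset T_fM]\cdot [E_f \subset V]$, which is precisely $[Y\subset M]|_f \cdot [E_f\subset V]$ by definition of equivariant restriction at a fixed point.

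Finally, when $\phi$ is birational onto its image, the proper equivariant Gysin map satisfies $\phi_*(1) = [\phi(E) \subset V]$ by the standard degree-one property, identifying the right-hand side of \eqref{singbase} with $[\phi(E)]$. I expect the principal obstacle to be the rigorous tangent-cone factorization in the restriction identity when $Y$ is singular at $f$: ``tangent cone of $E$'' must be handled with care. I would address this by a $T$-equivariant deformation (for instance, the deformation to the normal cone) that degenerates $E$ near $(f,0)$ to the model $C_fY \times E_f \subset T_fM \oplus V$, and invoke the invariance of the equivariant Poincar\'e dual under such degenerations to reduce the product formula to the transparent linear case.
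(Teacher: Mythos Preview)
Your proof is correct and follows essentially the same route as the paper: factor $\phi = \pi_2 \circ i$, push $i_*(1) = [E \subset M \times V]$ forward via Atiyah--Bott--Berline--Vergne localization over $M$, and use the product identity $[E \subset M\times V]|_f = [Y\subset M]|_f \cdot [E_f \subset V]$ together with the support argument for $f\notin Y$. The only difference is that the paper records this product identity as a one-line observation rather than justifying it via tangent cones and deformation to the normal cone; your extra care there is reasonable but goes beyond the level of detail the paper itself adopts.
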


\begin{proof}We have to calculate the integral ${\pi_2}_*(i_*(1))=\int_{M}i_*(1)=\int_M[ E\subset M\times V]$ (we identify the cohomology of $M\times V$ with the cohomology of $M$) for which we  apply the Berline-Vergne-Atiyah-Bott localization formula, that we recall now.

\begin{proposition}\cite{atiyah-bott}\label{ab} Suppose that $M$ is a compact manifold and $T$ is a torus acting smoothly on $M$, and the fixed point set $F(M)$ of the $T$-action on $M$ is finite. Then for any cohomology class $\alpha\in H_T^*(M)$
\begin{equation}\label{abformula}\int_M\alpha=\sum_{f\in F(M)} \frac{\alpha|_f}{e(T_{f}M)}.\end{equation}
Here $e(T_fM)$ is the $T$-equivariant Euler class of the tangent space $T_fM$. The right side is considered in the fraction field of the polynomial ring of $H^*_T($point$)=H^*(BT)$ (see more on details in \cite{atiyah-bott}): part of the statement is that the denominators cancel when the sum is simplified.
\end{proposition}

\noindent We complete the proof of Proposition~\ref{31} by noticing that

\begin{equation} [E\subset M\times V]|_f=[E_f\subset V]\cdot[Y\subset M]|_f,\label{eq:product} \end{equation}
where $f\in M\subset M\times V$.
The second equality in (\ref{singbase}) follows from the fact that the cohomology class $[Y\subset M]$ is supported on $Y$, so other fixed points give zero contribution.
\end{proof}

Notice that the same argument gives a localization formula for the case of smooth base and singular fiber.

\begin{remark}\label{zero} If $\phi$ decreases the dimension, then $\phi_*(1)$ is zero---since supported on a too small subset---so the right hand side of (\ref{singbase}) is zero.
\end{remark}

%\begin{remark} Proposition \ref{31} holds even if the fibers of $E$ are not linear subspaces, as long as equation \ref{eq:product} holds. For example if $Y$ smooth, then $E_f$ can be singular. This version was used in \cite{bsz06}.
%\end{remark}

\begin{remark} \label{smooth} If $f$ is a smooth point of $Y$, then
\[ \frac{e(T_fM)}{[Y\subset M]|_f} =e(T_f Y),\]
hence if $Y$ is smooth then formula (\ref{singbase}) further simplifies to
\[ \phi_*(1) =\sum_{f\in F(M)}\frac{[E_f\subset V]}{e(T_fY)}.\]
This formula holds in the general case too, if we define the {\em virtual (tangential) Euler class} $e(T_f Y)$ to be $\ \displaystyle{\frac{e(T_fM)}{[Y\subset M]|_f}}\ $ even if  $f$ is not a smooth point of $Y$.
\end{remark}

\begin{remark} The moral of Proposition~\ref{31} is that if we want to calculate the equivariant class of a variety with
localization, we should look for high-dimensional linear spaces in it. Precisely saying, we need an other variety, birational to the original, which is the total space of a vector bundle over a compact base space. The higher the rank of the bundle, the simpler the formula is. Usually the variety we start with is a cone, hence there is a canonical line bundle whose total space is birational to it. Therefore formula (\ref{singbase}) can be applied to find the Thom polynomial. This case is used in \cite{root}. Certain quiver varieties are birational to total spaces of vector bundles with higher rank, over a smooth compact space \cite{reineke}. Hence formula~(\ref{singbase}) can be effectively applied, yielding similar formulas for quiver polynomials as in \cite{ks}.
\end{remark}

\section{Localization for contact classes}\label{sec:locforcontact}

Now we apply the equivariant localization formula above to a the construction of Section \ref{sec:resolution}. This is different from the resolution used in \cite{bsz06} for Morin sinmgularities; it is more general (it covers all contact singularities), but numerically less effective.

Let $G(n,p)=\GL(n)\times \GL(p)$. Recall that the spaces in diagram (\ref{damondiag}) have $G(n,p)$-actions, and the maps in the diagram are
$G(n,p)$-equivariant. Let $T(n,p)=T(n)\times T(p)\cong U(1)^n\times U(1)^p$ be the maximal torus of $G(n,p)$, and restrict the action on the spaces and maps of diagram~(\ref{damondiag}) to $T(n,p)$. Recall also that the map $H^*_{G(n,p)}(\pt)\to H^*_{T(n,p)}(\pt)$ is injective (splitting lemma), hence by this restriction we do not loose any cohomological information. Now we can apply Proposition~\ref{31} and Proposition~\ref{birat} to the diagram (\ref{damondiag}), and we obtain our main result. Let $F$ denote the set of monomial ideals in $\gr^m$ being the fixed points of the $T(n,p)$-action on $\gr^m$ (where $T(p)$ acts trivially).

\begin{theorem}[Localization Formula] \label{lf} Let $g\in J^k(n,p)$ be a $k$-jet. Then
 \[\Tp(g)=\sum_{I\in F}\frac{[C_I\subset J^k(n,p)]\cdot[ \overline{\mathcal{R}I_g}\subset\gr^m]|_I}{e(T_I\gr^m)}.\]
\end{theorem}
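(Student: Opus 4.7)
The plan is to apply the equivariant localization formula of Proposition~\ref{31} to the diagram (\ref{damondiag}) with $Y=\overline{\mathcal{R}I_g}\subset \gr^m$, where $m=\mu(g)=\codim I_g$, and then invoke the birationality statement of Proposition~\ref{birat} to identify the left-hand side with $\Tp(g)$.

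First, I would pass from the group $G(n,p)=\GL(n)\times\GL(p)$ to the maximal torus $T(n,p)$. All objects and maps in diagram (\ref{damondiag}) are $G(n,p)$-equivariant: the group acts on $J^k(n)$ by the standard right/left action (with only $\GL(n)$ acting nontrivially on the source algebra), and diagonally on $J^k(n,p)=J^k(n)\otimes \C^p$, so the incidence condition $I_g\subset I$ defining $C(Y)$ is preserved. Restricting to $T(n,p)$ loses no cohomological information because $H^*_{G(n,p)}(\mathrm{pt})\hookrightarrow H^*_{T(n,p)}(\mathrm{pt})$ (splitting principle), so any identity of $T(n,p)$-equivariant classes in $H^*_{T(n,p)}(J^k(n,p))$ that happens to lie in the $G(n,p)$-image is already an identity of $G(n,p)$-equivariant classes.

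Next I would identify the $T(n,p)$-fixed points of $\gr^m=\gr^m(J^k(n))$. Since $T(p)$ acts trivially on $J^k(n)$ and $T(n)$ acts diagonally with respect to the monomial basis $\{x_1^{i_1}\cdots x_n^{i_n}\}$ of $J^k(n)$, the torus weights on $J^k(n)$ are all distinct generically (and certainly the monomials give a weight decomposition into one-dimensional pieces), so the fixed subspaces are exactly the coordinate subspaces spanned by monomial bases, i.e.\ the monomial ideals of codimension $m$ in $J^k(n)$. This is precisely the set $F$ in the statement. The fibre of the vector bundle $C(Y)\to Y$ at such a fixed $I$ is $C_I=I\otimes\C^p\subset J^k(n,p)$, by Definition~\ref{def:correspondence}.

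Now Proposition~\ref{31} applied to (\ref{damondiag}) with $M=\gr^m$, $V=J^k(n,p)$, $E=C(Y)$, and $\phi=\pi_2\circ i$ yields
\begin{equation*}
\phi_*(1)\;=\;\sum_{I\in F}\frac{[\,\overline{\mathcal{R}I_g}\subset\gr^m\,]|_I\cdot [\,C_I\subset J^k(n,p)\,]}{e(T_I\gr^m)}.
\end{equation*}
To finish, Proposition~\ref{birat} says $\phi\colon C(\overline{\mathcal{R}I_g})\to \overline{\mathcal{K}g}$ is birational, and therefore $\phi_*(1)=[\,\overline{\mathcal{K}g}\subset J^k(n,p)\,]=\Tp(g)$ by the last sentence of Proposition~\ref{31} together with the definition of the Thom polynomial. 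Combining these two displayed identities gives the formula.

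The main obstacle I anticipate is checking that the hypotheses of Proposition~\ref{31} are genuinely met: that $\gr^m$ is a compact algebraic manifold with finite $T(n,p)$-fixed set (handled by the monomial-ideal identification above), and that $C(Y)$ really is a $T(n,p)$-equivariant subbundle of the trivial bundle $\gr^m\times J^k(n,p)$ restricted to $Y$. The latter follows because the incidence condition $I_g\subset I$ cuts out a linear subspace $I\otimes \C^p$ of $J^k(n,p)$ whose dimension is constant on $\gr^m$; equivariance is immediate from the diagonal action. Once these routine verifications are in place, the formula is a direct assembly of Propositions~\ref{birat} and~\ref{31}.
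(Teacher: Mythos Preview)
Your argument is correct and is exactly the approach the paper takes: restrict from $G(n,p)$ to the maximal torus via the splitting lemma, apply Proposition~\ref{31} to diagram~(\ref{damondiag}) with $M=\gr^m$, $Y=\overline{\mathcal R I_g}$, $V=J^k(n,p)$, and then invoke the birationality of Proposition~\ref{birat} to identify $\phi_*(1)$ with $\Tp(g)$.

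One small imprecision worth fixing: the $T(n,p)$-fixed points of $\gr^m(J^k(n))$ are \emph{all} codimension-$m$ coordinate (monomial) subspaces, not only the monomial ideals; for instance $\langle x\rangle\subset J^2(1)$ is a fixed coordinate subspace but not an ideal. The reason the sum may be taken over the set $F$ of monomial \emph{ideals} is the second equality in (\ref{singbase}): the factor $[\,\overline{\mathcal R I_g}\subset\gr^m\,]|_I$ vanishes at every fixed point $I$ not lying in $Y=\overline{\mathcal R I_g}$, and $Y$ is contained in the (closed) locus of ideals, so its torus-fixed points are precisely monomial ideals. With that clarification your write-up is complete.
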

\hfill\qed

Using the virtual tangent Euler classes we get
\begin{equation}\label{eq:lfeu} \Tp(g)=\sum_{I\in F}\frac{[C_I\subset J^k(n,p)]}{e(T_I\overline{\mathcal{R}I_g})}.
\end{equation}

In the rest of this section we present two lemmas in which we study the factors $[C_I\subset J^k(n,p)]$ and $e(T_I\gr^m)$ in the Localization Formula. For this we choose the following notations. Let
 \[H_{T(n,p)}^*(J^k(n,p))= H_{T(n,p)}^*(\pt)=\Z[\alpha_1,\ldots,\alpha_n,\beta_1,\ldots,\beta_p],\]
where $\alpha_i$ (resp $\beta_i$) denotes the universal first Chern class in the  $i$'th factor of $H^*_{T(n)}($pt$)=\otimes_{i=1}^n H^*(BU(1))$ (resp. $H^*_{T(p)}($pt$)=\otimes_{i=1}^p H^*(BU(1))$). We call the $\alpha_i$'s and the $\beta_i$'s the Chern roots of the group $T(n,p)$. As usual, we identify weights of a $T(n,p)$-representation with linear combinations of the Chern roots. For a $T(n,p)$-representation $A$, let $W_A$ denote the multiset of its weights. The Euler class of a representation $A$ is $e(A)=\prod_{w\in W_A} w$.

We define resultants by $\res(S|T)=\prod_{s\in S, t\in T} (s-t)$ for the finite multisets $S$ and $T$. For example, the representation of $T(n,p)$ on the vector space $\Hom(\C^n,\C^p)$ by $(A,B)\cdot F=B\circ F \circ A^{-1}$ has weights $W_{\Hom(\C^n,\C^p)}=\{\beta_i-\alpha_j:i=1,\dotsc,p; j=1,\dotsc,n\}$ and Euler class $\res(\{\beta_1,\ldots,\beta_p\}|\{\alpha_1,\ldots,\alpha_n\})$. (In the rest of the paper, we will drop the brackets $\{\ \}$ from the notation.) Similarly,
  \[  W_{J^k(n,p)}=\{\beta_i-\sum_{j=1}^n a_{ij}\alpha_j:i=1,\dotsc,p; a_{ij}\geq0, 1\leq \sum_{j=1}^n a_{ij}\leq k\} \]
 and for the $T(n)$-representation on $J^k(n)$ we have
  \[  W_{J^k(n)}=\{-\sum_{j=1}^n a_{j}\alpha_j: a_{j}\geq0, 1\leq \sum_{j=1}^n a_{j}\leq k\}.  \]

The equivariant cohomology class represented by an invariant linear subspace in a representation space is the Euler class of the factor representation. Hence we have the following lemma.

\begin{lemma} Let $I$ be a monomial ideal.  Then
\[ [C_I\subset J^k(n,p)]=e(Q_I\otimes \C^p)=\prod_{i=1}^p\prod_{w\in  W_{Q_I}}(\beta_i+w)=\res(\beta_1,\dotsc,\beta_p|-W_{Q_I}),\]
where $Q_I$ is the quotient space $J^k(n)/I$. If $I$ is monomial then $Q_I$  is equipped with the induced representation of $T(n)\leq \GL(n)$.
\end{lemma}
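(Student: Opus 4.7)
The plan is to use directly the general principle cited in the sentence preceding the lemma: for a representation $V$ of a torus $T$ and a $T$-invariant linear subspace $L\subset V$, the $T$-equivariant Poincar\'e dual class $[L\subset V]_T$ equals the Euler class of the quotient representation $V/L$. So the work is to identify the $T(n,p)$-module structure on the quotient $J^k(n,p)/C_I$ and then read off its weights.

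First I would note that the tensor decomposition $J^k(n,p)=J^k(n)\otimes\C^p$ is equivariant for $T(n,p)=T(n)\times T(p)$: the torus $T(n)$ acts on $J^k(n)$ by precomposition, while $T(p)$ acts on $\C^p$ as the standard diagonal torus, and these two actions are independent. Since $I$ is a monomial ideal of $J^k(n)$, it is $T(n)$-invariant, and therefore $C_I=I\otimes\C^p$ is a $T(n,p)$-invariant linear subspace of $J^k(n,p)$ whose quotient is $(J^k(n)/I)\otimes\C^p=Q_I\otimes\C^p$ as a $T(n,p)$-representation (with $T(p)$ acting trivially on the first factor and $T(n)$ trivially on the second).

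Next I would compute the weights of the quotient representation. By our conventions the weights of $\C^p$ under $T(p)$ are $\beta_1,\dotsc,\beta_p$, which matches the form of $W_{J^k(n,p)}$ recorded earlier (where each monomial weight appears with a single additive $\beta_i$). The weights of $Q_I$ as a $T(n)$-representation are a subset of $W_{J^k(n)}$, namely the classes of the monomials not in $I$; we denote this multiset $W_{Q_I}$. Under tensor product of representations weights add, so $W_{Q_I\otimes\C^p}=\{\beta_i+w\ :\ i=1,\dotsc,p,\ w\in W_{Q_I}\}$, giving
\[ e(Q_I\otimes\C^p)=\prod_{i=1}^p\prod_{w\in W_{Q_I}}(\beta_i+w). \]
The final equality with the resultant is just the definitional unwinding $\res(\beta_1,\dotsc,\beta_p\,|\,-W_{Q_I})=\prod_{i,w}(\beta_i-(-w))$.

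The only subtle point, and the one I would be careful with, is checking the sign conventions in two places: that the $T(p)$-weight of the $i$-th coordinate of $\C^p$ really is $+\beta_i$ (as one reads off from $W_{J^k(n,p)}=\{\beta_i-\sum a_{ij}\alpha_j\}$), and that the $T(n)$-action by precomposition gives weight $-\sum a_j\alpha_j$ on the monomial $x_1^{a_1}\cdots x_n^{a_n}\in J^k(n)$ (as recorded in $W_{J^k(n)}$). Once these conventions are nailed down, the lemma is essentially tautological: weights add on tensor products, Euler classes are products of weights, and the class of an invariant linear subspace is the Euler class of the normal (quotient) representation.
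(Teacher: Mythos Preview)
Your proposal is correct and follows exactly the approach the paper takes: the paper simply invokes the principle that the equivariant class of an invariant linear subspace is the Euler class of the quotient representation, and marks the lemma with a \qed. Your write-up just fills in the routine details (the tensor decomposition, the identification of the quotient as $Q_I\otimes\C^p$, and the weight bookkeeping) that the paper leaves implicit.
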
 \qed

Notice that all coefficients in $W_{Q_I}$ are negative, so in applications the form $\res(\beta_1,\dotsc,\beta_p|-W_{Q_I})$ seems more natural.

The tangent bundle of a Grassmannian is Hom$(A,B)$ where $A$ and $B$ are the tautological sub- and quotient bundles. Therefore the following lemma calculates the denominator of the Localization Formula explicitly.

\begin{lemma} We have \[e(T_I\gr^m)=\res(W_{Q_I}|W_I).\] \end{lemma}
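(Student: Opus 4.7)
The plan is to identify the tangent space $T_I\gr^m$ as a $T(n)$-representation and then read off its Euler class as a product of weights.

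First, I would use the standard description of the tangent space to a Grassmannian. For any vector space $V$ and any $m$-codimensional subspace $I\subset V$, the tangent space to $\gr^m(V)$ at the point $I$ is canonically identified with $\Hom(I, V/I)$. Applied to $V = J^k(n)$, this gives
\[
T_I\gr^m \;\cong\; \Hom\bigl(I,\, J^k(n)/I\bigr) \;=\; \Hom(I, Q_I).
\]
This identification is natural, in particular equivariant for the $\GL(n)$-action on $J^k(n)$ (by composition on the source), and hence equivariant for the restricted $T(n)$-action.

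Second, since $I$ is a monomial ideal, it is $T(n)$-invariant, so $T(n)$ acts on both $I$ and $Q_I$; by definition, the multisets of weights are $W_I$ and $W_{Q_I}$ respectively. The weights of $\Hom(I, Q_I) \cong I^{*}\otimes Q_I$ are therefore $\{w' - w : w' \in W_{Q_I},\ w \in W_I\}$, each with multiplicity given by the product of multiplicities.

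Third, the Euler class of a $T(n)$-representation is the product of its weights, so
\[
e(T_I\gr^m) \;=\; \prod_{w'\in W_{Q_I}}\prod_{w\in W_I}(w' - w) \;=\; \res(W_{Q_I}\,|\,W_I),
\]
by the definition of $\res$ recalled earlier in the paper. This completes the proof.

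There is no real obstacle here; the only point requiring a brief remark is the $T(n)$-equivariance of the canonical isomorphism $T_I\gr^m(V)\cong \Hom(I, V/I)$, but this is immediate from the functorial construction of $\gr^m$ (one checks on the standard affine chart around $I$, which is $\Hom(I, V/I)$ with $T(n)$ acting diagonally).
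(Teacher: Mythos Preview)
Your proof is correct and matches the paper's approach exactly: the paper simply notes that the tangent bundle of a Grassmannian is $\Hom(A,B)$ for the tautological sub- and quotient bundles, and then states the lemma with a bare $\qed$. You have supplied precisely the details that the paper leaves implicit.
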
 \qed

Again, if we want positive coefficients, we can write $e(T_I\gr^m)=\res(-W_{I}|-W_{Q_I})$.

The factor $[\overline{\mathcal{R}I_g}\subset \gr^m]|_I$ in the Localization Formula  is a subtle invariant of the set
 $\overline{\mathcal{R}I_g}$ at~$I$. Its calculation is difficult in general. In Section \ref{sec:calc} we calculate special cases.

\subsection{First application of the Localization Formula}\label{firstex} Let $g\in J^k(n,p)$ be the jet with the degree $d$ monomials as coordinate functions for $k\geq d$ and $p=\binom{n+d-1}{d}$. That is,
$I_g=(J)^d$, where $J=J^k(n)$. The ideal $I_g$ is a fixed point of the right group $\mathcal{R}$, so the localization formula immediately gives
\begin{equation} \label{eq:firstex}   [(J)^d(n,p)]=\res(\beta_1,\dotsc,\beta_p|-W_{Q((J)^d)}),  \end{equation}
where  $-W_{Q((J)^d)}=\{\sum a_i\alpha_i:a_i\geq0, 0<\sum a_i<d\}$.

The singularity whose $k$-jet is $g$ is called the Thom-Boardman singularity $\Sigma^{n,\dotsc,n}(n,p)$ (the number of $n$'s in the superscript is $d$). Hence (\ref{eq:firstex}) is the Thom polynomial of this singularity. This is not a new result though it might be the first appearance in the literature in this generality. The $d=1$ case recovers a special case of the Giambelli-Thom-Porteous formula (cf. Theorem \ref{porteous}) in the Chern root format:
$\Tp_{\Sigma^{n}}(n,p)=\res(\beta_1,\ldots,\beta_p|\alpha_1,\ldots,\alpha_n)$.

       \section{Stability properties of the Thom polynomial}\label{sec:stability}

   \subsection{Dependence of Thom polynomials on $k$} \label{sec:k}
Our definition of the Thom polynomial of a finite germ $f$, used its $k$-jet. In this section we show that the Thom polynomial does not depend on $k$, that is, we prove the following
\begin{theorem} \label{k} Suppose that $g\in J^l(n,p)$ and $\depth(I_g)=k\leq l$, i.e. $(J)^{k+1}\subset I_g$, where $J=J^l(n)$. Then
\[\Tp(j^k g)= \Tp(g),\]
where $j^k:J^l(n,p)\to J^k(n,p)$ is the projection.
\end{theorem}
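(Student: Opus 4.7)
The plan is to prove the set-theoretic identity
\[
\overline{\mathcal{K}^l g}=(j^k)^{-1}\bigl(\overline{\mathcal{K}^k j^kg}\bigr)
\]
inside $J^l(n,p)$, and then extract the equality of Thom polynomials. Since $j^k:J^l(n,p)\to J^k(n,p)$ is a $\GL(n)\times\GL(p)$-equivariant linear surjection, it realizes $J^l(n,p)$ as a trivial equivariant vector bundle over $J^k(n,p)$; hence $(j^k)^*$ is an isomorphism on equivariant cohomology, and, being a submersion, $(j^k)^*[\overline{\mathcal{K}^k j^kg}]=[(j^k)^{-1}(\overline{\mathcal{K}^k j^kg})]$ by transversality (Proposition~\ref{prop:equitrans}). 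Under the canonical identifications of both equivariant cohomology rings with $H^*(B(\GL(n)\times\GL(p)))$, the displayed identity then yields $\Tp(j^k g)=\Tp(g)$.

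One inclusion is immediate from~(\ref{eq:jet-of-k}), which gives $j^k(\mathcal{K}^l g)\subset \mathcal{K}^k j^kg$ and hence $\overline{\mathcal{K}^l g}\subset (j^k)^{-1}(\overline{\mathcal{K}^k j^kg})$. The reverse inclusion reduces to the following key lemma: for every $g'\in\mathcal{K}^l g$ and every $r\in\ker j^k$ one has $g'+r\in\overline{\mathcal{K}^l g}$. Granting this, if $h\in(j^k)^{-1}(\mathcal{K}^k j^kg)$ satisfies $j^k h=\alpha\cdot j^k g$ for some $\alpha\in\mathcal{K}^k$, I lift $\alpha$ to $\tilde\alpha\in\mathcal{K}^l$ via the surjection $\mathcal{K}^l\twoheadrightarrow \mathcal{K}^k$; then $h=\tilde\alpha g + r$ with $r=h-\tilde\alpha g\in\ker j^k$, and the lemma places $h$ in $\overline{\mathcal{K}^l g}$. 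Because $j^k$ is a trivial vector bundle, $(j^k)^{-1}$ commutes with closure, giving $(j^k)^{-1}(\overline{\mathcal{K}^k j^kg})=\overline{(j^k)^{-1}(\mathcal{K}^k j^kg)}\subset\overline{\mathcal{K}^l g}$.

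For the lemma I would deform along the line $t\mapsto g'+tr$ for $t\in\C$. Since the $\mathcal{R}^l(n)$-action on $J^l(n)$ preserves $J$ and its powers, every ideal in the $\mathcal{R}^l$-orbit of $I_g$ still contains $(J)^{k+1}$; hence $I_{g'}\supset (J)^{k+1}$, each component of $r$ lies in $(J)^{k+1}\subset I_{g'}$, and consequently $I_{g'+tr}\subset I_{g'}$ for every $t$. Upper semicontinuity of $\mu=\codim I_{(\cdot)}$ makes $\{t\in\C:\mu(g'+tr)\leq\mu(g)\}$ Zariski open; it contains $t=0$ and is therefore cofinite. For such $t$ the containment of ideals combined with equality of codimensions forces $I_{g'+tr}=I_{g'}$, so $g'+tr$ is contact equivalent to $g'$ and lies in $\mathcal{K}^l g$; passing to $t\to 1$ within this cofinite set places $g'+r$ in $\overline{\mathcal{K}^l g}$. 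The hard part is exactly this deformation step: the naive hope that $g'+r\in\mathcal{K}^l g$ for \emph{every} $r\in\ker j^k$ fails---e.g.\ $g=x^3\in J^3(1,1)$ with $r=-x^3$ yields $0$, which sits in a strictly smaller orbit---so one genuinely needs semicontinuity to conclude only that $g'+r$ lies in the closure.
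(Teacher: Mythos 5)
Your argument is correct and takes essentially the same route as the paper: both prove the set-theoretic identity $(j^k)^{-1}\overline{\mathcal{K}^k(j^k g)}=\overline{\mathcal{K}^l(g)}$ by lifting through $\mathcal{K}^l\to\mathcal{K}^k$ and using $(J)^{k+1}\subset I_g$ to get the containment of ideals $I_{g'+r}\subset I_{g'}$, and then conclude on the level of equivariant classes via the linear projection. Your upper-semicontinuity deformation along the line $t\mapsto g'+tr$ is just a line-restricted version of the paper's Lemma~\ref{ei} (Zariski-openness of the locus where the coordinates generate the full ideal), while the cohomological pullback step you spell out is left implicit in the paper.
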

\proof  We need to show that
\[(j^k)^{-1}\overline{\mathcal{K}^k(j^k g)}=\overline{\mathcal{K}^l(g)}.\]
 Using the same notation $j^k$ for the projection $\mathcal{K}^l\to\mathcal{K}^k$ we have that
 \begin{equation}\label{eq:jet-of-k-l} j^k(H(f))=j^k(H)(j^k(f)), \end{equation}
where $H\in \mathcal{K}^l$ and $f\in J^l(n,p)$, which implies that $j^k(\mathcal{K}^lf)=\mathcal{K}^kj^k(f)$. Therefore it is enough to show that $j^k(f)=j^k(g)$ implies $f\in \overline{\mathcal{K}^l(g)}$. Th equality $j^k(f)=j^k(g)$ implies that $ I_f+(J)^{k+1}=I_g+(J)^{k+1}$ and by the assumption on $g$ we have that $I_g+(J)^{k+1}=I_g$, which implies that $I_f\subset I_g$, which further implies $f\in \overline{\mathcal{K}^l(g)}$ by Lemma \ref{ei}.\qed

   \subsection{Dependence of Thom polynomials on $p-n$} \label{sec:p-n} In this section we prove the classical stability result---Theorem \ref{stab}---on Thom polynomials of contact singularities. Let $\sigma:J^k(n,p)\to J^k(n+1,p+1)$ denote the stabilization map
\[\sigma g(x_1,\dotsc,x_{n+1}):=\big(g_1(x_1,\dotsc,x_{n}),\dotsc,g_p(x_1,\dotsc,x_{n}),x_{n+1}\big). \]
\begin{theorem}[Stability] \label{stab}
\[\Tp(g)=\sigma^*\Tp(\sigma g),\]
where $\sigma^*:H^*_{G(n,p)}\to H^*_{G(n+1,p+1)}$ is the homomorphism induced by the map
\[ G(n,p)\to G(n+1,p+1),\ \  (M,N)\mapsto \left(\smx M001,\smx N001\right). \]
\end{theorem}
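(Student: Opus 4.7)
The plan is to apply the equivariant pullback formula (Proposition \ref{prop:equitrans}) to the stabilization embedding $\sigma$, after establishing the set-theoretic identity $\sigma^{-1}\bigl(\overline{\mathcal{K}^k(n+1,p+1)\sigma g}\bigr)=\overline{\mathcal{K}^k(n,p)g}$ and the transversality of $\sigma$ to the orbit closure on the right.

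\textbf{Equivariance and algebra preservation.} First I verify that $\sigma$ intertwines the $G(n,p)$-action with the $G(n+1,p+1)$-action via the block embedding in the statement; this is immediate since $x_{n+1}$ is undisturbed by block-diagonal reparametrizations in source and target. Next, for any $g'\in J^k(n,p)$, the ideal of $\sigma g'$ in $J^k(n+1)$ equals $I_{g'}\cdot J^k(n+1)+(x_{n+1})$, so
\[ Q_{\sigma g'}=J^k(n+1)/I_{\sigma g'}\cong J^k(n)/I_{g'}=Q_{g'}. \]
By Proposition \ref{qiso}, $\sigma$ carries the contact orbit $\mathcal{K}^k(n,p)g$ into $\mathcal{K}^k(n+1,p+1)\sigma g$; continuity then yields the easy inclusion $\overline{\mathcal{K}^k(n,p)g}\subset\sigma^{-1}(\overline{\mathcal{K}^k(n+1,p+1)\sigma g})$.

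\textbf{Reverse inclusion and transversality.} For the opposite inclusion, assume $\sigma g'$ lies in the closure on the right, and pick a sequence $H_\nu\sigma g\to\sigma g'$ with $H_\nu\in\mathcal{K}^k(n+1,p+1)$. The special form of $\sigma g$---its $(p+1)$-st component is literally $x_{n+1}$---enables an implicit-function argument: infinitesimal contact transformations acting on the extra $x_{n+1}$-slot (either by reparametrizing the $(n+1)$-st source coordinate or by multiplying into the $(p+1)$-st target component) sweep out precisely the complement of $\sigma(J^k(n,p))$ in $J^k(n+1,p+1)$. So for large $\nu$ each $H_\nu\sigma g$ can be moved by a small contact transformation into the slice $\sigma(J^k(n,p))$, producing $\sigma g_\nu$ with $g_\nu\to g'$; since $Q_{g_\nu}\cong Q_{\sigma g_\nu}\cong Q_g$, Proposition \ref{qiso} places $g_\nu$ in $\mathcal{K}^k(n,p)g$, giving $g'\in\overline{\mathcal{K}^k(n,p)g}$. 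The same infinitesimal-contact calculation shows that at $\sigma g$---and hence by $\mathcal{K}^k(n+1,p+1)$-equivariance at every smooth point of any contact stratum of $\overline{\mathcal{K}^k(n+1,p+1)\sigma g}$---the image $d\sigma(TJ^k(n,p))$ is transverse to the stratum, establishing the required transversality.

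\textbf{Conclusion and main obstacle.} Proposition \ref{prop:equitrans} then yields
\[ \sigma^*\bigl[\overline{\mathcal{K}^k(n+1,p+1)\sigma g}\bigr]_{G(n+1,p+1)}=\bigl[\overline{\mathcal{K}^k(n,p)g}\bigr]_{G(n,p)}, \]
which is the desired $\Tp(g)=\sigma^*\Tp(\sigma g)$. The crux of the whole argument is the transversality/slice statement at $\sigma g$: verifying that infinitesimal contact transformations in the extra $x_{n+1}$-direction together with variations within $\sigma(J^k(n,p))$ already cover the tangent space to each contact stratum. This is essentially a version of Mather's transverse-slice theorem for the contact group action; once it is in place, both the reverse set-theoretic inclusion and the transversality of $\sigma$ follow by standard arguments and the rest of the proof is formal.
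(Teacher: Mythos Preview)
Your proposal is correct and follows essentially the same route as the paper: establish transversality of $\sigma$ to the contact stratification and the preimage identity, then apply Proposition~\ref{prop:equitrans}. The paper's execution is somewhat more direct in two places: for the preimage it proves only the orbit-level statement $\sigma^{-1}(\mathcal{K}\sigma g)=\mathcal{K}g$ (Lemma~\ref{preim}), which is immediate from $Q_{\sigma g'}\cong Q_{g'}$ and Proposition~\ref{qiso} without any sequential or implicit-function argument (transversality to all strata then automatically upgrades this to closures); and for transversality (Lemma~\ref{K-trans}) it writes down the explicit tangent space $T_h\mathcal{K}h=I_h\otimes\C^{p+1}+T_h\mathcal{R}h$ and checks by hand that $I_{\sigma g}\otimes\C^{p+1}+T_{\sigma g}\mathcal{R}\sigma g+\sigma J^k(n,p)=J^k(n+1,p+1)$, which is exactly the ``infinitesimal contact transformations in the extra $x_{n+1}$-slot'' computation you allude to.
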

The reason we include the proof here is twofold. First, we would like to strengthen the stability theorem and show that these Thom polynomials are {\em supersymmetric}. Second, it gives us a chance to study the geometry related to the Localization Formula.

The Localization Formula gives the Thom polynomial in Chern roots i.e. in generators of $H^*_{T(n,p)}\iso \Z[\alpha_1,\dotsc,\alpha_n,\beta_1,\dotsc,\beta_p]$. Since $\Tp(g)$ is in the image of $H^*_{G(n,p)}\to H^*_{T(n,p)}$, it is symmmetric in both the $\alpha$ and the $\beta$ variables. However it has more symmetry.
\begin{definition}The polynomial $q\in \Z[\alpha_1,\dotsc,\alpha_n,\beta_1,\dotsc,\beta_p]$ is {\em supersymmetric} (see \cite{lascoux}) if
  \begin{enumerate}
  \item symmmetric in both the $\alpha$ and the $\beta$ variables,
  \item $q(\alpha_1,\dotsc,\alpha_{n-1},t,\beta_1,\dotsc,\beta_{p-1},t)$ does not depend on $t$.
  \end{enumerate}
\end{definition}
\begin{theorem} \label{supersym} The Thom polynomial of a finite germ is supersymmetric.\end{theorem}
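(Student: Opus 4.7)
The plan is to strengthen Theorem~\ref{stab} to an ``enriched stability'' identity via an enlarged group embedding, and then combine this with the $S_{n+1}\times S_{p+1}$-symmetry of $\Tp(\sigma g)$ to deduce supersymmetry of $\Tp(g)$.

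First I would replace the embedding $G(n,p) \hookrightarrow G(n+1, p+1)$ from Theorem~\ref{stab} by the enlarged
\[
\iota: G(n, p) \times \GL(1) \hookrightarrow G(n+1, p+1), \qquad (M, N, \lambda) \mapsto \left(\smx M 0 0 \lambda, \smx N 0 0 \lambda\right),
\]
under which $\iota^*$ sends $\alpha_i \mapsto \alpha_i$, $\beta_j \mapsto \beta_j$ for $i \le n, j \le p$ and $\alpha_{n+1}, \beta_{p+1} \mapsto t$, the Chern class of the extra $\GL(1)$. A direct computation shows that the stabilization $\sigma$ is equivariant for $\iota$ with the extra $\GL(1)$ acting \emph{trivially} on $J^k(n, p)$: on $\sigma h = (h_1,\ldots,h_p, x_{n+1})$ the element $(\diag(1,\ldots,1,\lambda), \diag(1,\ldots,1,\lambda))$ scales $x_{n+1}$ by $1/\lambda$ in the source and by $\lambda$ in the last target coordinate, fixing $\sigma h$. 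Running the proof of Theorem~\ref{stab} with this enlarged group (using $Q_{\sigma h} \cong Q_h$, Mather's Theorem~\ref{isoq-germ}, and Lemma~\ref{ei} to identify $\sigma^{-1}(\overline{\K^k \sigma g}) = \overline{\K^k g}$, together with transversality of the equivariant pullback) produces the enriched stability
\[
\Tp(\sigma g)(\alpha_1,\ldots,\alpha_n, t, \beta_1,\ldots,\beta_p, t) = \Tp(g)(\alpha_1,\ldots,\alpha_n, \beta_1,\ldots,\beta_p),
\]
holding as a polynomial identity in all variables. The $t = 0$ case recovers Theorem~\ref{stab}; what is new is the independence from $t$, which encodes that $\iota^*\Tp(\sigma g)$ lies in the subring $H^*_{G(n,p)}(\pt) \otimes 1 \subset H^*_{G(n,p)\times \GL(1)}(\pt)$ because $\GL(1)$ acts trivially on $J^k(n,p)$.

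The supersymmetry of $\Tp(g)$ then drops out by a symmetry game: applying Theorem~\ref{stab}, the $S_{n+1}\times S_{p+1}$-symmetry of $\Tp(\sigma g)$, and enriched stability in succession,
\begin{align*}
&\Tp(g)(\alpha_1,\ldots,\alpha_{n-1}, t, \beta_1,\ldots,\beta_{p-1}, t)\\
&\qquad\stackrel{\text{Thm \ref{stab}}}{=}\Tp(\sigma g)(\alpha_1,\ldots,\alpha_{n-1}, t, 0, \beta_1,\ldots,\beta_{p-1}, t, 0)\\
&\qquad\stackrel{\text{sym}}{=}\Tp(\sigma g)(\alpha_1,\ldots,\alpha_{n-1}, 0, t, \beta_1,\ldots,\beta_{p-1}, 0, t)\\
&\qquad\stackrel{\text{enriched}}{=}\Tp(g)(\alpha_1,\ldots,\alpha_{n-1}, 0, \beta_1,\ldots,\beta_{p-1}, 0),
\end{align*}
which is manifestly independent of $t$.

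The main obstacle is the rigorous proof of the enriched pullback identity in step one. Conceptually it is the same equivariant degeneracy-locus argument as in Theorem~\ref{stab}, so the mechanics are shared; what must be checked carefully is (a) that $\sigma^{-1}(\overline{\K^k \sigma g}) = \overline{\K^k g}$ as cycles (the containment $\supseteq$ is immediate from $Q_{\sigma h} \cong Q_h$, and the reverse requires a codimension/transversality argument), and (b) the transversality of the affine embedding $\sigma$ to the orbit closure $\overline{\K^k \sigma g}$ at $\sigma g$. Once these standard singularity-theoretic checks are in place, the decisive observation that the extra $\GL(1)$ acts trivially on $J^k(n, p)$ immediately forces $t$-independence.
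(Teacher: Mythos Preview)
Your approach is essentially the same as the paper's. What you call ``enriched stability'' is exactly the paper's Theorem~\ref{strongstab} (Strong stability), phrased in Chern roots: the paper enlarges $G(n,p)$ to $G'=G(n,p)\times\GL(1)$ with the extra $\GL(1)$ acting trivially on $J^k(n,p)$ and diagonally on the last coordinates of $J^k(n+1,p+1)$, and proves $\Tp_{G'}(g)=\Tp_{G'}(\sigma g)$ via precisely the two ingredients you name as obstacles---transversality of $\sigma$ to every contact class (Lemma~\ref{K-trans}) and $\sigma^{-1}(\mathcal K\sigma g)=\mathcal K g$ (Lemma~\ref{preim})---combined with Proposition~\ref{prop:equitrans}. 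The only difference is presentational: the paper asserts that strong stability is \emph{equivalent} to stability plus supersymmetry and leaves the deduction to the reader, whereas your three-line substitution chain (ordinary stability, then the $S_{n+1}\times S_{p+1}$ symmetry of $\Tp(\sigma g)$, then enriched stability) makes that deduction explicit and is a clean way to see it.
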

An important property of supersymmetric polynomials is that they can be expressed in {\em quotient variables}:
We define a map
\[\rho_{n,p}:\Z[c_1,\dotsc,c_i,\dotsc]\to \Z[\alpha_1,\dotsc,\alpha_n,\beta_1,\dotsc,\beta_p]\]
by the formal power series
\begin{equation}\label{eq:c} 1+c_1t+c_2t^2+\cdots=\frac{\prod_{j=1}^p(1+t\beta_j)}{\prod_{i=1}^n(1+t\alpha_i)},
   \end{equation}
i.e. $\rho_{n,p}(c_1)=\beta_1+\cdots+\beta_p-\alpha_1-\cdots-\alpha_n$ and so on. We say that $q\in \Z[\alpha_1,\dotsc,\alpha_n,\beta_1,\dotsc,\beta_p]$ can be expressed in quotient variables if it is in the image of $\rho_{n,p}$.
\begin{theorem}[Lascoux \cite{lascoux}] \label{super=>quot}The polynomial $q\in \Z[\alpha_1,\dotsc,\alpha_n,\beta_1,\dotsc,\beta_p]$ is supersymmetric if and only if it can be expressed in quotient variables. \end{theorem}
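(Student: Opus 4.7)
\textbf{Easy direction} ($\Leftarrow$). This is a direct calculation from the generating series (\ref{eq:c}). Each $c_k$ is manifestly symmetric in the $\alpha$'s and separately in the $\beta$'s. For the cancellation property, substituting $\alpha_n=\beta_p=t$ into the right-hand side of (\ref{eq:c}) inserts the factor $(1+st)$ into both the numerator and the denominator; these cancel, leaving the generating series for the $(n{-}1,p{-}1)$-quotient variables, which contains no $t$. Since supersymmetric polynomials form a subring, the entire image of $\rho_{n,p}$ is supersymmetric.

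\textbf{Converse} ($\Rightarrow$). My plan is to induct on $n+p$. The base cases $n=0$ or $p=0$ reduce to the classical fundamental theorem of symmetric polynomials, since there $\rho_{0,p}(c_k)=e_k(\beta)$ (and its analogue for $\rho_{n,0}$). For the inductive step, let $q$ be supersymmetric in $(n,p)$ variables and set $q_0:=q|_{\alpha_n=\beta_p=0}\in\Z[\alpha_1,\dots,\alpha_{n-1},\beta_1,\dots,\beta_{p-1}]$. Symmetry of $q_0$ is inherited, and its cancellation property at a pair $(\alpha_i,\beta_j)$ with $i<n$, $j<p$ follows by first transposing $\alpha_i\leftrightarrow\alpha_n$ and $\beta_j\leftrightarrow\beta_p$ in $q$ (allowed by symmetry), then applying the cancellation property of $q$ at $(\alpha_n,\beta_p)$, and finally specializing the swapped-in variables to zero. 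By the inductive hypothesis, $q_0=P(c_1',c_2',\dots)$ for some polynomial $P$, where the primed $c_k'$ denote the $(n{-}1,p{-}1)$ versions. A direct inspection of (\ref{eq:c}) shows $c_k|_{\alpha_n=\beta_p=0}=c_k'$, so the difference $r:=q-P(c_1,c_2,\dots)$ is a supersymmetric polynomial that vanishes on $\{\alpha_n=\beta_p=0\}$.

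\textbf{Main obstacle.} Closing the induction requires showing that the error term $r$ itself lies in the image of $\rho_{n,p}$. By the full $S_n\times S_p$-symmetry, $r$ vanishes on every $\{\alpha_i=\beta_j=0\}$, hence lies in $\bigcap_{i,j}(\alpha_i,\beta_j)$, an ideal that I would prove---by a straightforward ideal-theoretic calculation---equals $(e_n(\alpha),e_p(\beta))$. Writing $r=e_n(\alpha)A+e_p(\beta)B$ with $A,B$ taken $S_n\times S_p$-invariant, the cancellation condition for $r$ forces a compatibility between $A$ and $B$ which, combined with induction on total degree, peels off further supersymmetric factors and terminates. A cleaner alternative, which I would prefer in a polished write-up, is to invoke the super Jacobi--Trudi identity $s_\lambda=\det(c_{\lambda_i-i+j})$: the super-Schur polynomials $s_\lambda$ are known to furnish an additive basis for the supersymmetric subring, and each of them is manifestly in the image of $\rho_{n,p}$. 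I would then refer to Lascoux \cite{lascoux} for the combinatorial core of this basis argument.
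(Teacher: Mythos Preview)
The paper does not prove this theorem at all; it simply attributes it to Lascoux and uses it as a black box. So there is no ``paper's proof'' to compare against, and your write-up is really a sketch of a proof of a cited result.

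Your easy direction is fine. Your inductive set-up for the converse is the standard one, and the reduction to analysing the error term $r=q-P(c_1,c_2,\dots)$ is correct. The gap is in how you handle $r$.

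First, you are throwing away information. You only use that $r$ vanishes on each $\{\alpha_i=\beta_j=0\}$, landing $r$ in $(e_n(\alpha),e_p(\beta))$. But the cancellation property gives much more: since $r|_{\alpha_n=\beta_p=t}$ is independent of $t$ and equals $0$ at $t=0$, it vanishes on the \emph{entire} hyperplane $\{\alpha_n=\beta_p\}$, and by symmetry on every $\{\alpha_i=\beta_j\}$. Hence the full resultant $R=\prod_{i,j}(\beta_j-\alpha_i)$ divides $r$. This is the key structural fact in all standard proofs.

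Second, even this stronger divisibility does not let you ``peel off supersymmetric factors'' as you suggest. The quotient $r/R$ need not be supersymmetric: already for $n=p=1$ take $r=\beta_1^2-\alpha_1^2=c_1^2-2c_2$, which is supersymmetric and divisible by $R=\beta_1-\alpha_1$, but $r/R=\beta_1+\alpha_1$ fails the cancellation property. Your decomposition $r=e_n(\alpha)A+e_p(\beta)B$ is even less promising, since neither $e_n(\alpha)$ nor $e_p(\beta)$ is supersymmetric, so you cannot feed $A$ or $B$ back into any induction on supersymmetric polynomials. The phrase ``forces a compatibility \dots\ peels off further supersymmetric factors and terminates'' is not an argument.

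Your ``cleaner alternative'' is in fact the honest route: the super Schur polynomials $s_\lambda(\beta/\alpha)=\det(c_{\lambda_i-i+j})$ span the ring of supersymmetric polynomials, and each is visibly in the image of $\rho_{n,p}$. Proving that spanning statement is exactly the content of the Lascoux/Stembridge result you are citing, so if you invoke it you are done---but then you should present that as the proof, not as an afterthought to an incomplete inductive argument.
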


The expression of supersymmetric polynomials in terms of the quotient variables is unique if the degree of $q$ is not too high compared to $n$ and $p$.

\begin{proposition}\label{injectivity} If $\rho_{n,p}(h)=0$ for a non-zero polynomial $h\in \Z[c_1,\dotsc,c_i,\dotsc]$ then $\deg(h)\geq (n+1)(p+1)$ with the convention $\deg c_i=i$. \end{proposition}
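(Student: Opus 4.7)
My plan is to pass to the Schur basis of $\Z[c_1,c_2,\ldots]$. By the Jacobi--Trudi relations, the polynomials $\Delta_\lambda$ from (\ref{schurdef}), indexed by all partitions $\lambda$, form a $\Z$-basis of $\Z[c_1,c_2,\ldots]$, so I may write $h=\sum_\lambda a_\lambda\Delta_\lambda$ uniquely with only finitely many $a_\lambda\in\Z$ nonzero. The question therefore reduces to identifying which Schur polynomials $\Delta_\lambda$ lie in $\ker\rho_{n,p}$, and showing that the remaining $\rho_{n,p}(\Delta_\lambda)$'s are linearly independent.

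The crucial input is the following vanishing/independence criterion for $\rho_{n,p}(\Delta_\lambda)$: one has $\rho_{n,p}(\Delta_\lambda)=0$ if and only if $\lambda$ contains the rectangle $(p+1)^{n+1}$ (equivalently, $\lambda_{n+1}\ge p+1$), and the nonvanishing images $\{\rho_{n,p}(\Delta_\lambda):\lambda_{n+1}\le p\}$ are $\Z$-linearly independent in $\Z[\alpha_1,\ldots,\alpha_n,\beta_1,\ldots,\beta_p]$. Granted this criterion, the assumption $\rho_{n,p}(h)=0$ together with the linear independence forces $a_\lambda=0$ for every $\lambda$ with $\lambda_{n+1}\le p$. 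Hence $h$ is supported on partitions containing the $(p+1)\times(n+1)$ rectangle; every such partition has $|\lambda|\ge(n+1)(p+1)$, so $\deg h\ge(n+1)(p+1)$, as required.

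The main work is to justify the criterion, and this is where I would slow down. The vanishing $\rho_{n,p}(\Delta_{((p+1)^{n+1})})=0$ is immediate from the Giambelli--Thom--Porteous formula applied to the (empty) Thom--Boardman stratum $\Sigma^{n+1}(n,p)\subset\Hom(\C^n,\C^p)$: a linear map $\C^n\to\C^p$ has corank at most $n$, so the class of $\Sigma^{n+1}$ is zero, yet the formula would give exactly $\Delta_{((p+1)^{n+1})}(c)$ of degree $(n+1)(p+1)$. The general vanishing for $\lambda\supseteq(p+1)^{n+1}$ and the linear independence for the remaining $\lambda$ constitute the classical statement that the Schur polynomials of the virtual bundle $B-A$ indexed by the ``fat hook'' partitions $\{\lambda:\lambda_{n+1}\le p\}$ form a $\Z$-basis of the ring of supersymmetric polynomials in $\alpha,\beta$ --- a statement fully compatible with Lascoux's Theorem~\ref{super=>quot} above, and which can alternatively be recovered by a leading-term argument after specializing the $\alpha$'s and $\beta$'s in generic position.
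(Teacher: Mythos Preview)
Your argument is correct and follows essentially the same route as the paper. The paper does not give a self-contained proof either: immediately after stating the proposition it simply records that the kernel of $\rho_{n,p}$ is known explicitly, $\ker(\rho_{n,p})=\langle\Delta_\lambda:\text{rectangle}\subset\lambda\rangle$, with a reference to \cite[\S4.2]{pragacz:enumgeo}. Your passage to the Schur basis together with this description of the kernel is exactly that argument, with the pleasant addition that you explain the vanishing of the minimal rectangular $\Delta$ via the empty Porteous locus $\Sigma^{n+1}(n,p)$. One side remark: the paper records the rectangle as $(n+1)^{(p+1)}$ whereas you write $(p+1)^{n+1}$; your version (i.e.\ $\lambda_{n+1}\ge p+1$) is the correct one, as your Porteous argument confirms, and in any case both rectangles have size $(n+1)(p+1)$, so the degree bound is unaffected.
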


In fact, the kernel of $\rho_{n,p}$ is known explicitly (see \cite[\S 4.2]{pragacz:enumgeo}): $\ker (\rho_{n,p})=\langle
\Delta_\lambda:{(n+1)}^{(p+1)}\subset \lambda\rangle$, where $\langle\ \rangle$ means the generated $\Z$-module (for the definition of $\Delta$ see (\ref{schurdef})).

Now we translate supersymmetry to geometry. Notice that the stabilization map  $\sigma:J^k(n,p)\to J^k(n+1,p+1)$ is $G'$-equivariant for $G'=G(n,p)\times \GL(1)$, where $\GL(1)$ acts trivially on $J^k(n,p)$ and {\em diagonally on the last variables} of elements of $J^k(n+1,p+1)$. Supersymmetry and stability is equivalent to the following strengthening of the stability Theorem \ref{stab}:
\begin{theorem}(Strong stability.) \label{strongstab} For any $g\in J^k(n,p)$
      \[  \Tp_{G'}(g)=\Tp_{G'}(\sigma g).  \]   \end{theorem}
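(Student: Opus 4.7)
The plan is to verify the hypotheses of Proposition~\ref{prop:equitrans} for the $G'$-equivariant map $\sigma\colon J^k(n,p)\to J^k(n+1,p+1)$ and the invariant subvariety $Y:=\overline{\mathcal{K}^k(n+1,p+1)\cdot\sigma g}$. I first check $G'$-equivariance: under the block-diagonal inclusion $G'\hookrightarrow G(n+1,p+1)$, the factor $\lambda\in\GL(1)$ acts by $\lambda$ on the added source variable $x_{n+1}$ and by $\lambda$ on the added target coordinate, and these two actions cancel when composed with $\sigma$. Since $J^k(n,p)$ and $J^k(n+1,p+1)$ are $G'$-equivariantly contractible (via straight-line homotopy to the origin), their equivariant cohomologies are canonically $H^*(BG')$, and $\sigma^*$ becomes the identity under these identifications. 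Thus once transversality is established, Proposition~\ref{prop:equitrans} gives
\[\Tp_{G'}(\sigma g)=\sigma^*[Y]_{G'}=[\sigma^{-1}(Y)]_{G'},\]
and the proof is reduced to identifying $\sigma^{-1}(Y)$ with $\overline{\mathcal{K}^k(n,p)\cdot g}$.

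The set-theoretic identification follows from Proposition~\ref{qiso} together with continuity of the quotient algebra invariant. For the easy inclusion, if $g_m\in\mathcal{K}g$ converges to $g'\in\overline{\mathcal{K}g}$, then $Q_{\sigma g_m}\cong Q_{g_m}\cong Q_g\cong Q_{\sigma g}$, so $\sigma g_m\in\mathcal{K}\sigma g$ by Proposition~\ref{qiso}, and $\sigma g'\in Y$ follows by continuity. Conversely, if $\sigma g'$ is a limit of germs in $\mathcal{K}\sigma g$, the nilpotent algebra $Q_{g'}=Q_{\sigma g'}$ is an algebraic deformation of $Q_g=Q_{\sigma g}$; realizing this deformation by a curve in $J^k(n,p)$ (using Proposition~\ref{qiso} in reverse) presents $g'$ as a limit of germs in $\mathcal{K}g$, so $g'\in\overline{\mathcal{K}g}$.

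The main obstacle is verifying transversality of $\sigma$ to every singularity stratum $\mathcal{K}\sigma g''\subset Y$. At an intersection point $\sigma g'\in\mathcal{K}\sigma g''$, the tangent space to the orbit contains two explicit summands: the subspace $I_{\sigma g'}\otimes\C^{p+1}$ coming from arbitrary variations of the left (matrix) factor, and the line of vectors $(0,\dots,0,\xi_{n+1})$ obtained by varying only the $(n+1)$-st component of the right reparametrization, since $\partial_{n+1}(\sigma g')_i=\delta_{i,p+1}$. Given an arbitrary $h=(h_1,\dots,h_{p+1})\in J^k(n+1,p+1)$, take $\xi_{n+1}:=h_{p+1}$ to absorb the last coordinate, then decompose $h_i=h_i^{(0)}(\bar x)+x_{n+1}\cdot h_i^{(1)}$ for $i\leq p$: the polynomial $h_i^{(0)}(\bar x)$ lies in the image of the linear part of $\sigma$, while $x_{n+1}\cdot h_i^{(1)}$ lies in $I_{\sigma g'}$ because $x_{n+1}\in I_{\sigma g'}$. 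Summing these contributions exhibits $h$ as an element of $\mathrm{image}(T_{g'}\sigma)+T_{\sigma g'}(\mathcal{K}\sigma g')$, establishing transversality. The same decomposition applies verbatim at every deeper stratum $\mathcal{K}\sigma g''$, because $x_{n+1}\in I_{\sigma g''}$ holds for any germ of the form $\sigma g''$.
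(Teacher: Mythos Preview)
Your overall strategy matches the paper's exactly: apply Proposition~\ref{prop:equitrans} to the $G'$-equivariant map $\sigma$, verify transversality to every contact orbit, and identify the preimage. Your transversality argument is essentially the paper's proof of Lemma~\ref{K-trans}: the tangent space $T_{\sigma g'}\mathcal{K}\sigma g'$ contains both $I_{\sigma g'}\otimes\C^{p+1}$ and the vectors $p_{n+1}\partial_{n+1}\sigma g'=(0,\dots,0,p_{n+1})$, and together with the image of $d\sigma$ these span $J^k(n+1,p+1)$ via the decomposition $h_i=a_i+b_ix_{n+1}$.

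The gap is in your set-theoretic identification of $\sigma^{-1}(\overline{\mathcal{K}\sigma g})$ with $\overline{\mathcal{K}g}$. The ``converse'' direction is not justified: from $\sigma g'\in\overline{\mathcal{K}\sigma g}$ you get a curve in $J^k(n+1,p+1)$, but ``realizing this deformation by a curve in $J^k(n,p)$ (using Proposition~\ref{qiso} in reverse)'' does not follow. Proposition~\ref{qiso} produces, for each $t\neq 0$, \emph{some} jet $g_t\in J^k(n,p)$ with $Q_{g_t}\cong Q_g$, but gives no control over continuity in $t$ or over the limit as $t\to 0$; there is no reason the $g_t$ should converge to $g'$. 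Moreover the phrase ``$Q_{g'}$ is an algebraic deformation of $Q_g$'' is vague (in what family? the dimensions can even jump).

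The fix is simple and is exactly what the paper does in Lemma~\ref{preim}: you only need the open statement $\sigma^{-1}(\mathcal{K}\sigma g)=\mathcal{K}g$, which is immediate from $Q_{\sigma g'}\cong Q_{g'}$ and Proposition~\ref{qiso}. Transversality then ensures that $\sigma^{-1}(\overline{\mathcal{K}\sigma g}\setminus\mathcal{K}\sigma g)$ has strictly larger codimension than $\mathcal{K}g$, so $\overline{\mathcal{K}g}$ is the unique top-dimensional component of $\sigma^{-1}(\overline{\mathcal{K}\sigma g})$ and their equivariant classes agree. You do not need (and should not attempt) the equality of the closed sets themselves.
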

This theorem immediately follows from the two lemmas below and Proposition~\ref{prop:equitrans} on the transversal pull back of Thom polynomials.
\begin{lemma}\label{K-trans} The stabilization map $\sigma$ is transversal to every contact class in  $J^k(n+1,p+1)$.
\end{lemma}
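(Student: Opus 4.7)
The plan is to verify transversality pointwise by an explicit tangent-space decomposition. Fix $g\in J^k(n,p)$ and set $f=\sigma g\in J^k(n+1,p+1)$. Since contact classes in $J^k(n+1,p+1)$ are $\mathcal{K}^k(n+1,p+1)$-orbits by Proposition~\ref{qiso}, hence smooth, it will suffice to establish, at every point $f$ in the image of $\sigma$,
\[
 d\sigma(T_g J^k(n,p))+T_f(\mathcal{K}^k(n+1,p+1)\cdot f)=T_f J^k(n+1,p+1).
\]
The image of $d\sigma$ at $g$ is the linear subspace $U_1=\{(h_1,\dotsc,h_p,0):h_i\in J^k(n)\}$ of $(J^k(n+1))^{p+1}$, and a natural direct complement is the subspace $U_2$ consisting of tuples $(h_1,\dotsc,h_{p+1})$ with $h_i$ divisible by $x_{n+1}$ in $J^k(n+1)$ for $i\le p$ and $h_{p+1}$ arbitrary in $J^k(n+1)$. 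The task reduces to showing $U_2\subseteq T_f(\mathcal{K}^k\cdot f)$.

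The key observation is that $f_{p+1}=x_{n+1}$ lets the contact group move $f$ along two independent families of directions whose sum fills $U_2$. For the matrix (``left'') part of $\mathcal{K}^k$, I take a germ $N\colon(\C^{n+1},0)\to\mathfrak{gl}(p+1)$ supported only in the last column, with $N_{i,p+1}=r_i$ for $i\le p$ and the remaining entries zero; the corresponding orbit-tangent vector $N\cdot f$ has $i$-th entry $r_i\,x_{n+1}$ and zero in the remaining slots, so as $r_i$ ranges over all polynomial germs of appropriate degree this sweeps out the first-$p$-coordinate part of $U_2$. For the right-group part, the infinitesimal reparametrization $\zeta=a\,\partial/\partial x_{n+1}$ with $a\in\mathcal{M}_{n+1}/\mathcal{M}_{n+1}^{k+1}$ yields the orbit-tangent vector $-df\cdot\zeta=(0,\dotsc,0,-a)$, because $\partial f_i/\partial x_{n+1}=0$ for $i\le p$ and $\partial f_{p+1}/\partial x_{n+1}=1$; varying $a$ sweeps out the last-coordinate part of $U_2$. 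Adding the two contributions gives $U_2\subseteq T_f(\mathcal{K}^k\cdot f)$. Because the argument uses only the identities $f_{p+1}=x_{n+1}$ and $\partial f_i/\partial x_{n+1}=0$ for $i\le p$---features shared by every point in the image of $\sigma$---transversality is established at every intersection of that image with any contact class.

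The one subtle point I expect to encounter is justifying the contribution of the \emph{constant} parts of the $r_i$, which produce directions like $(x_{n+1},0,\dotsc,0)$ that lie strictly outside $U_1$ and whose legitimacy depends on whether the Lie algebra of $\mathcal{K}^k$ admits matrix entries with nonzero value at the origin. I plan to settle this via the ideal description of the contact orbit: by Lemma~\ref{ei}, any first-order perturbation of $f$ by an element of $I_f\otimes\C^{p+1}$ preserves the ideal $I_f$ and therefore remains in the orbit; since $x_{n+1}\in I_f$, the direction $(x_{n+1}\cdot c,0,\dotsc,0)$ lies in $T_f(\mathcal{K}^k\cdot f)$ for every $c\in\C$, independently of any basepoint convention in the definition of $\mathcal{K}^k$. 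This secures the only nontrivial step in the decomposition and completes the transversality verification.
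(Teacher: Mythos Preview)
Your argument is correct and is essentially the paper's proof in slightly different packaging. The paper invokes the tangent-space description $T_{\sigma g}\mathcal{K}\sigma g = I_{\sigma g}\otimes\C^{p+1} + T_{\sigma g}\mathcal{R}\sigma g$ up front and then decomposes an arbitrary $h_i\in J^k(n+1)$ as $a_i+b_ix_{n+1}$ with $a_i\in J^k(n)$; the $b_ix_{n+1}$ terms land in $I_{\sigma g}\otimes\C^{p+1}$ since $x_{n+1}\in I_{\sigma g}$, the $a_i\otimes e_i$ for $i\le p$ lie in $\sigma J^k(n,p)$, and $a_{p+1}\otimes e_{p+1}=a_{p+1}\partial_{n+1}(\sigma g)$ lies in the $\mathcal{R}$-tangent. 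Your detour through the matrix part of $\mathcal{K}^k$ and the subsequent ``subtle point'' is resolved exactly by the ideal term $I_f\otimes\C^{p+1}$, which is what the paper uses from the start; once you invoke that, the two arguments coincide line by line.
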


\begin{lemma}\label{preim} We have $\sigma^{-1}(\mathcal{K}\sigma g)=\mathcal{K} g$ for any $g\in J^k(n,p)$.   \end{lemma}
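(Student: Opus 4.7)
The plan is to reduce to Proposition~\ref{qiso}, which asserts that two $k$-jets are contact equivalent if and only if their nilpotent algebras are isomorphic. The central algebraic fact to establish is that stabilization does not alter the nilpotent algebra: $Q_{\sigma g}\cong Q_g$ for every $g\in J^k(n,p)$.

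To verify this, I would use the surjective algebra homomorphism $\pi:J^k(n+1)\to J^k(n)$ defined by setting $x_{n+1}=0$. A direct check on the monomial basis shows that $\ker\pi$ is exactly the ideal of $J^k(n+1)$ generated by $x_{n+1}$, namely the linear span of monomials divisible by $x_{n+1}$. Since the coordinate functions of $\sigma g$ are $g_1(x_1,\dotsc,x_n),\dotsc,g_p(x_1,\dotsc,x_n),x_{n+1}$, we have $\pi(I_{\sigma g})=I_g$ and $\pi^{-1}(I_g)=I_{\sigma g}$. Hence $\pi$ descends to an algebra isomorphism
\[ Q_{\sigma g}=J^k(n+1)/I_{\sigma g}\ \xrightarrow{\ \sim\ }\ J^k(n)/I_g=Q_g. \]

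The same calculation applied to any $h\in J^k(n,p)$ gives $Q_{\sigma h}\cong Q_h$. Two applications of Proposition~\ref{qiso}, sandwiching the algebra isomorphisms $Q_{\sigma h}\cong Q_h$ and $Q_{\sigma g}\cong Q_g$, yield the chain of equivalences
\[ h\in \sigma^{-1}(\mathcal{K}\sigma g) \iff Q_{\sigma h}\cong Q_{\sigma g} \iff Q_h\cong Q_g \iff h\in \mathcal{K}g, \]
which proves both inclusions simultaneously.

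The only delicate point is the bookkeeping in the non-unital algebra $J^k(n+1)$: one must verify that $\ker\pi$ really coincides with the ideal generated by $x_{n+1}$ in the non-unital sense used by the paper, so that the factorization through $Q_{\sigma g}$ is well-defined. Once this is in place, the argument is essentially the third isomorphism theorem combined with Mather's theorem on contact equivalence of jets.
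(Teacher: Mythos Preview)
Your proof is correct and follows exactly the same approach as the paper: invoke Proposition~\ref{qiso} together with the isomorphism $Q_{\sigma g}\cong Q_g$. The paper's proof is a single sentence asserting precisely these two ingredients; you have simply filled in the verification of $Q_{\sigma g}\cong Q_g$ via the explicit projection $\pi$, which the paper leaves to the reader.
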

\proof[Proof of Lemma \ref{K-trans}.] The results of Section \ref{sec:resolution} imply that for $g\in J^k(n,p)$ the tangent space of its contact class is
\begin{equation}\label{eq:K-tangent}T_g\mathcal{K}g=I_g\otimes \C^p+T_g\mathcal{R}g, \end{equation}
where
\begin{equation}T_g\mathcal{R}g=\left\{\sum_{i=1}^n p_i\partial_ig:p_i\in J^k(n)\right\}.\end{equation}
Applying this to the germ $\sigma g\in J^k(n+1,p+1)$ we see that transversality is equivalent to the property that the three subspaces $I_{\sigma g}\otimes \C^{p+1}, \ T_{\sigma g}\mathcal{R}\sigma g$ and $\sigma J^k(n,p)$ span $J^k(n+1,p+1)$.

Let $h=\sum_{i=1}^{p+1}h_i\otimes e_i$ any element of $J^k(n+1,p+1)$, where $\{e_i:i=1,\dotsc,p+1\}$ is the standard basis of $\C^{p+1}$ and $h_i\in J^k(n+1)$. We can write $h_i$ in the form $h_i=a_i+b_ix_{n+1}$ where $a_i \in J^k(n)$ and $b_i \in J^k(n+1)$. Since $x_{n+1}\in I_{\sigma g}$, it is enough to show that $a_i\otimes e_i$ is in the span. For $i\leq p$ we have $a_i\otimes e_i\in \sigma J^k(n,p)$ and for the last coordinate notice that $a_{p+1}\partial_{n+1}\sigma{g}=a_{p+1}\otimes e_{p+1}$.\qed

\proof[Proof of Lemma \ref{preim}.] The statement follows from Proposition \ref{qiso} and that $Q_{\sigma g}\iso Q_g$ for any $g\in J^k(n,p)$.\qed

The proof of Theorem \ref{strongstab}---and hence Theorems \ref{stab} and \ref{supersym}---is complete. These facts  imply the {\em Thom-Damon-Ronga theorem} of Section \ref{sec:exa}. Analogously to Thom polynomials of contact classes it is possible to define Thom polynomials for {\em right-left} classes and they cannot be expressed in quotient variables in general.
Recall that $\mu(g)=\codim (I_g \subset J^k(n))$.

\begin{proposition}\label{unique} Let $g\in J^k(n,p)$ with  $n\geq \mu(g)-1$. Then there is a unique polynomial $\tp(g)\in \Z[c_1,c_2,\dots]$ such that $\rho_{n,p}(\tp(g))=\Tp(g)$.
 \end{proposition}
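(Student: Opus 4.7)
The plan is to combine the existence of a preimage, supplied by supersymmetry, with a degree estimate coming from the partial resolution of Section \ref{sec:resolution}. Existence is essentially immediate: Theorem \ref{supersym} tells us that $\Tp(g)$ is supersymmetric in the Chern roots $\alpha_1,\dots,\alpha_n,\beta_1,\dots,\beta_p$, and Lascoux's theorem (Theorem \ref{super=>quot}) then furnishes some $h\in\Z[c_1,c_2,\dots]$ with $\rho_{n,p}(h)=\Tp(g)$. Because $\rho_{n,p}$ is a graded ring map (with $\deg c_i=i$ matching $\deg\alpha_j=\deg\beta_j=1$) and $\Tp(g)$ is homogeneous of degree $D:=\codim(\overline{\mathcal{K}g}\subset J^k(n,p))$, we may discard all components of $h$ outside that degree and take $\tp(g)$ to be the homogeneous degree-$D$ piece of $h$.

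The real work is showing $D<(n+1)(p+1)$. Here I would invoke Proposition \ref{birat}: the map $C(\overline{\mathcal{R}I_g})\to\overline{\mathcal{K}g}$ is birational, and its source is a vector bundle of rank $p\dim I_g=p(\dim J^k(n)-\mu(g))$ over $\overline{\mathcal{R}I_g}\subset\gr^m$. A short dimension count in the ambient space $J^k(n,p)=J^k(n)\otimes\C^p$ yields
\[ D=p\mu(g)-\dim\mathcal{R}I_g\leq p\mu(g). \]
The standing hypothesis $n\geq\mu(g)-1$ rewrites as $\mu(g)\leq n+1$, so
\[ D\leq p(n+1)=np+p<np+n+p+1=(n+1)(p+1). \]

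Uniqueness then falls out of Proposition \ref{injectivity}: any two homogeneous preimages $\tp_1,\tp_2$ of $\Tp(g)$ produce a difference in $\ker\rho_{n,p}$ of degree at most $D<(n+1)(p+1)$, which forces $\tp_1-\tp_2=0$. The only step with real content is the codimension estimate, and the main conceptual point is that the numerical hypothesis $n\geq\mu(g)-1$ is exactly what is needed to align the geometric upper bound $p\mu(g)$ on $\codim\overline{\mathcal{K}g}$ with the algebraic bound $(n+1)(p+1)$ governing the kernel of $\rho_{n,p}$; everything else is bookkeeping.
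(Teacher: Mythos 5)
Your proposal is correct and follows essentially the same route as the paper: existence via supersymmetry and Lascoux's theorem (Theorem \ref{super=>quot}), the degree bound $\deg\Tp(g)=p\mu(g)-\dim\mathcal{R}I_g\leq p(n+1)<(n+1)(p+1)$ coming from the partial resolution, and uniqueness from Proposition \ref{injectivity}. The paper's own proof is just a compressed version of this argument, so the only difference is that you spell out the dimension count and the final inequality explicitly.
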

\proof Theorem \ref{super=>quot} implies existence. Since $\deg(\tp(g))=\mu(g)p-\dim(\mathcal{R}g)$, we have $\deg(\tp(g))<(n+1)(p+1)$, and
Proposition \ref{injectivity} implies uniqueness. \qed

\begin{definition} If the condition $n\geq \mu(g)-1$ is not satisfied, then we can take an iterated stabilization of $g$ to get a unique polynomial, what we will also denote by $\tp(g)$. Also, we will use the notation $\tp_Q(l):=\tp(g)$, where $g\in J^k(n,p)$, such that its nilpotent algebra $Q_g$ is isomorphic to $Q$, and $p-n=l$. Stability justifies this notation.
\end{definition}

As we already remarked, formula (\ref{eq:firstex}) implies that for $p\geq \binom{n+1}{2}$
\begin{equation}\label{por_root} \Tp_{\Sigma^n}(n,p)=\res(\beta_1,\dotsc,\beta_p|\alpha_1,\dotsc,\alpha_n).\end{equation}
Since $\mu(\Sigma^n(n,p))=n$, Proposition \ref{unique} can be applied. We obtain that the polynomial
in quotient variables which is equal to the right hand side of (\ref{por_root}) will express the Thom polynomial of any $\Sigma^n(*,*+l)$
(at least for $l\geq \binom{n}{2}$). This argument reproves the following classical theorem.
\begin{theorem}[Giambelli-Thom-Porteous]\label{porteous} The Thom polynomial of $\Sigma^n$ in quotient variables is
  \begin{equation}\label{eq:gia}
     \tp_{\Sigma^n}(l)=\Delta_{{\underbrace{\scriptstyle{n+l,\dotsc,n+l}}_{\mbox{$\scriptstyle{n}$}}}}=\det(c_{n+l+j-i})_{1\leq i,j\leq n}. \end{equation}
  \end{theorem}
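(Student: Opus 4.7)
The plan is to deduce the theorem from three pieces already assembled in the paper: the Chern-root formula (\ref{por_root}) for the Thom polynomial of $\Sigma^n$, the uniqueness statement of Proposition \ref{unique}, and one classical identity from the algebra of symmetric functions.

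First I would check that Proposition \ref{unique} applies. The nilpotent algebra of $\Sigma^n$ is $\mathcal{M}_n/\mathcal{M}_n^2$, which is $n$-dimensional, so $\mu(\Sigma^n) = n$ and the hypothesis $n \geq \mu - 1 = n-1$ holds automatically. The orbit $\mathcal{R} I_g$ consists of the single $\GL(n)$-fixed ideal $\mathcal{M}_n^2$, so $\deg \tp_{\Sigma^n}(l) = \mu p = np$, well below the injectivity bound $(n+1)(p+1)$ of Proposition \ref{injectivity}. Consequently there exists a unique polynomial $\tp_{\Sigma^n}(l) \in \mathbf{Z}[c_1,c_2,\dots]$ of degree $np$ with $\rho_{n,p}(\tp_{\Sigma^n}(l)) = \Tp_{\Sigma^n}(n,p)$, and it therefore suffices to exhibit \emph{any} polynomial in the $c_i$ pulling back to the right-hand side of (\ref{por_root}).

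Second, combining (\ref{por_root}) with uniqueness, the whole theorem reduces to the purely algebraic identity
\[ \rho_{n,p}\!\bigl(\det(c_{p+j-i})_{1\leq i,j\leq n}\bigr) \;=\; \prod_{i=1}^{n}\prod_{j=1}^{p}(\beta_j - \alpha_i), \]
where $c_k$ is defined by the generating series (\ref{eq:c}). I would prove this by a standard vanishing-and-degree argument: the right-hand side obviously vanishes whenever one substitutes $\beta_j = \alpha_i$; on the other hand, such a substitution makes the generating series (\ref{eq:c}) lose a factor $(1+t\beta_j)/(1+t\alpha_i)=1$, so $\rho$ factors through the analogous map for $(n-1,p-1)$ and the determinant on the left drops to the corresponding $(n-1)\times(n-1)$ rectangular determinant, which by induction is divisible by $\prod_{i'\neq i,\, j'\neq j}(\beta_{j'}-\alpha_{i'})$ but gains no extra factor---so the left side also vanishes to the correct order on each hyperplane $\beta_j=\alpha_i$. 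Since both sides are homogeneous of degree $np$ and (being Euler classes / supersymmetric in both $\alpha$ and $\beta$) share the same leading coefficient on the monomial $\prod_j \beta_j^n$, they must coincide.

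The main obstacle is the algebraic identity in the last step. It is the classical product formula for the rectangular super Schur polynomial $s_{(p^n)}(\beta/\alpha)$, and while the vanishing-plus-induction strategy above is the most self-contained route, one could alternatively derive it from a Cauchy--Binet expansion of $\det(c_{p+j-i})$ using $c_k = \sum_i (-1)^{k-i}e_i(\beta)h_{k-i}(\alpha)$, or cite the supersymmetric Jacobi--Trudi identity directly. Once this identity is established, Theorem \ref{porteous} is immediate from the uniqueness provided by Proposition \ref{unique}.
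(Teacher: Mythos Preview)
Your overall strategy matches the paper's exactly: combine the Chern-root expression (\ref{por_root}) with the uniqueness furnished by Proposition~\ref{unique}, and then identify the unique quotient-variable lift as the rectangular Schur determinant. The paper simply treats this last identification --- that $\rho_{n,p}(\Delta_{p^n})=\prod_{i,j}(\beta_j-\alpha_i)$ --- as the classical content of the theorem (it is precisely the $\lambda=\mu=\emptyset$ case of the Factorization Formula, Lemma~\ref{factorization_property}, quoted from Macdonald), whereas you attempt to prove it from scratch.

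Your direct argument for the identity has a gap in the vanishing step. After substituting $\beta_j=\alpha_i$, the $n\times n$ determinant $\det(c_{p+s-r})$ does \emph{not} ``drop to the corresponding $(n-1)\times(n-1)$ rectangular determinant''; it remains an $n\times n$ determinant, now with the $(n-1,p-1)$ quotient Chern classes as entries. Worse, if it really did reduce to $\Delta_{(p-1)^{n-1}}$ in those smaller variables, then by your own induction hypothesis it would equal the \emph{nonzero} product $\prod_{i'\neq i,\,j'\neq j}(\beta_{j'}-\alpha_{i'})$ --- so the left-hand side would fail to vanish on the hyperplane $\beta_j=\alpha_i$, which is the opposite of what you need for divisibility. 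The correct statement is that $\Delta_{p^n}\in\ker\rho_{n-1,p-1}$, and this is exactly the borderline case of the explicit kernel description recorded right after Proposition~\ref{injectivity} (the rectangle $p^n$ just barely contains itself). With that in hand no induction is needed: the left-hand side vanishes on every hyperplane $\beta_j=\alpha_i$, hence is divisible by the right-hand side; then your degree-plus-leading-coefficient comparison (top $\beta$-part $=s_{n^p}(\beta)=(\beta_1\cdots\beta_p)^n$) finishes the job.
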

\subsection{Dependence of Thom polynomials on $p$}\label{sec:d-stab}
In this section we study the relation between the Thom polynomial of the jets $g$ and $\delta g$ where \[\delta:J^k(n,p)\to J^k(n,p+1),\ \ \delta g:(x_1,\ldots,x_n)\mapsto(g_1,\dotsc,g_p,0).\]
In other words we are interested in the dependence of the Thom polynomial $\tp_Q(l)$ on $l$. In \cite{dstab} we showed that under a technical condition one can calculate $\tp(g)$ from $\tp(\delta g)$ by `lowering the indices'. Notice that $Q_{\delta g}\iso Q_g$. Consequently the Thom polynomials of all germs with a given quotient algebra $Q$ (or local algebra $\mathcal{Q}$) can be arranged into a series what we called the {\em Thom series} of $Q$. The variables of this series are normalized Chern classes what we denoted by $d_i$, and hence this stabilization property will be called {\em d-stability}.

\begin{definition}\label{flat} Fix $m \in\N$ and assume that the polynomial $q\in \Z[c_0,c_1,\dotsc]$ has {\em width} $m$, i.e.
\[q=\sum_{|K|=m}a_Kc^K,\ \  \text{where} \ K\in\N^m\ \text{and} \ c^K=\prod_{i=1}^m c_{K_i}, \]
using the $c_0=1$ convention. We define the {\em lowering operator} $\flat=\flat(m)$ by
\[q^\flat:=\sum_{|K|=m}a_Kc^{K^\flat},\ \  \text{where} \ K^\flat_i=K_i-1,\]
using the $c_{-1}=0$ convention.
\end{definition}
E.g. for $m=2$ and $q=c_2^2+c_1c_3+2c_4$ we have $q^\flat=c_1^2+c_2$, where we did not write out the $c_0=1$ factors.
\begin{theorem}\label{th:d-stab} Let $g$ be a jet with $\mu(g)=m$. Then $\tp(g)$ has width $m$ and
\[\tp(\delta g)^\flat=\tp(g).\]
\end{theorem}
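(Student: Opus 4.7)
My plan is to read both statements as leading-order consequences of the Localization Formula (Theorem~\ref{lf}), since the ingredients controlling $\Tp(g)$ and $\Tp(\delta g)$ agree except for a single extra resultant factor. The starting point is the observation that $I_{\delta g}=I_g$ as ideals in $J^k(n)$, hence the Grassmannian datum $\overline{\mathcal R I_g}\subset\gr^m$, the fixed point set $F$, the restrictions $[\overline{\mathcal R I_g}]|_I$, and the Euler classes $e(T_I\gr^m)$ are identical in the Localization Formulas for $g$ and $\delta g$. Explicitly,
\[ \Tp(\delta g)=\sum_{I\in F}\frac{\prod_{w\in W_{Q_I}}(\beta_{p+1}+w)\;\cdot\;\prod_{j=1}^{p}\prod_{w\in W_{Q_I}}(\beta_j+w)\;\cdot\;[\overline{\mathcal R I_g}]|_I}{e(T_I\gr^m)}, \]
and since $|W_{Q_I}|=m$ for every fixed point $I$, the extra factor is monic of degree $m$ in $\beta_{p+1}$. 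This yields the geometric leading-order identity
\[ \Tp(\delta g)=\beta_{p+1}^m\,\Tp(g)+O(\beta_{p+1}^{m-1}). \]

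For the width claim I would argue that the same Localization Formula exhibits $\Tp(g)$ as a supersymmetric polynomial (by Theorem~\ref{supersym}) whose degree in each individual $\beta_j$ is at most $m$, again because $|W_{Q_I}|=m$ on every summand. The key algebraic lemma is that, for $n,p$ large, the image under $\rho_{n,p}$ of the width-$\leq m$ polynomials in $c_1,c_2,\dotsc$ coincides with the subspace of supersymmetric polynomials of $\beta_j$-degree $\leq m$. The easy inclusion follows from the formula $c_i=\sum_{k+j=i}e_k(\beta)(-1)^j h_j(\alpha)$, which is linear in each $\beta_j$; the reverse direction uses the super-Schur basis theorem together with the description of $\ker\rho_{n,p}$ recalled after Proposition~\ref{injectivity} and the computation that $\rho_{n,p}(\Delta_\lambda)$ has $\beta_j$-degree $\ell(\lambda)$.

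Once the width-$m$ statement is in hand, I translate the leading-order identity into quotient variables. The factorization $c'(t)=c(t)(1+\beta_{p+1}t)$ gives $c'_i=c_i+\beta_{p+1}c_{i-1}$, so for any width-$m$ monomial
\[ \prod_{i=1}^{m}c'_{K_i}=\beta_{p+1}^m\prod_{i=1}^{m}c_{K_i-1}+O(\beta_{p+1}^{m-1}), \]
with the $c_{-1}=0$ convention killing the top term whenever some $K_i=0$. Summing, $\tp(\delta g)(c'_\bullet)=\beta_{p+1}^m\,\tp(\delta g)^\flat(c_\bullet)+O(\beta_{p+1}^{m-1})$. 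Matching the $\beta_{p+1}^m$ coefficients against the geometric identity produces $\rho_{n,p}(\tp(\delta g)^\flat)=\rho_{n,p}(\tp(g))$; since both sides have bounded degree, Proposition~\ref{injectivity} applied for $n,p$ large upgrades this to the abstract equality $\tp(\delta g)^\flat=\tp(g)$.

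The main obstacle is the algebraic lemma on widths: I must rule out cancellations among distinct super-Schur polynomials $\rho_{n,p}(\Delta_\lambda)$ that could artificially lower the $\beta_j$-degree below $\ell(\lambda)$. The Lascoux-style basis theorem for supersymmetric functions is what does the real work here; once in place the rest of the argument is a short leading-order comparison, with the additional benefit of removing the technical hypothesis under which \cite{dstab} originally proved the identity.
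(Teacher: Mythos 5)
Your proposal is correct and follows essentially the same route as the paper: the localization computation showing that passing from $g$ to $\delta g$ multiplies each fixed-point contribution by $\res(\beta_{p+1}|-W_{Q_I})$, a monic polynomial of degree $m$ in $\beta_{p+1}$, so that the coefficient of $\beta_{p+1}^m$ in $\Tp(\delta g)$ is $\Tp(g)$, is exactly the paper's Proposition~\ref{root-d-stab}, which carries the whole geometric content. The remaining translation (the width-$m$ bound plus matching $\beta_{p+1}^m$-coefficients via $c'_i=c_i+\beta_{p+1}c_{i-1}$ and injectivity of $\rho_{n,p}$ in low degrees, Proposition~\ref{injectivity}) is precisely the ``simple calculation'' the paper cites from \cite[2.3]{dstab}, and your sketch of it is sound: the cancellation you worry about among the $\rho_{n,p}(\Delta_\lambda)$ is ruled out by extracting the top coefficient in a single $\beta$-variable, which acts on the Schur basis as the lowering operator $\Delta_\lambda\mapsto\Delta_{\lambda-1^{\ell(\lambda)}}$ and hence preserves linear independence for $n,p$ large.
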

A simple calculation shows (see \cite[2.3]{dstab}) that to prove Theorem \ref{th:d-stab} it is enough to prove the following.
\begin{proposition}\label{root-d-stab} Let g$\in J^k(n,p)$ with $\mu(g)=m$ and write
\[ \Tp(\delta g)(\alpha_1,\dotsc,\alpha_n,\beta_1,\dotsc,\beta_p,\beta_{p+1})=\sum p_i\beta_{p+1}^{m-i}\ \ \ \text{for}
 \ \ \ p_i\in\Z[\alpha_1,\dotsc,\alpha_n,\beta_1,\dotsc,\beta_p].\]
Then $p_0=\Tp(g)$.
 \end{proposition}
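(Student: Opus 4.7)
\medskip

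\noindent\textbf{Proof plan.} The strategy is to apply the Localization Formula (Theorem \ref{lf}) to both $\Tp(g)$ and $\Tp(\delta g)$ and compare the two expressions termwise.

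The first observation is that $\delta$ appends a zero coordinate, so $I_{\delta g} = I_g$ as an ideal in $J^k(n)$, and hence $Q_{\delta g} \iso Q_g$ and $\mu(\delta g) = m$ as well. In particular, the orbit closure $\overline{\mathcal R I_{\delta g}}$ lives in exactly the same Grassmannian $\gr^m(J^k(n))$ as $\overline{\mathcal R I_g}$ and coincides with it. Since $\delta$ only enlarges the target from $\C^p$ to $\C^{p+1}$, the Grassmannian, its $T(n)$-fixed point set $F$, the tangent weights $e(T_I \gr^m)$, and the multiplicities $[\overline{\mathcal R I_g}\subset \gr^m]|_I$ appearing in the Localization Formula are identical for the two problems. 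Thus
\[
\Tp(\delta g) \;=\; \sum_{I\in F} \frac{[C_I\subset J^k(n,p+1)]\cdot [\overline{\mathcal R I_g}\subset\gr^m]|_I}{e(T_I\gr^m)},
\qquad
\Tp(g) \;=\; \sum_{I\in F} \frac{[C_I\subset J^k(n,p)]\cdot [\overline{\mathcal R I_g}\subset\gr^m]|_I}{e(T_I\gr^m)},
\]
and the entire difference between the two formulas is concentrated in the factor $[C_I \subset J^k(n,\cdot)]$.

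Next I would use the lemma computing $[C_I\subset J^k(n,p)] = \res(\beta_1,\dots,\beta_p\,|\,-W_{Q_I})$. Extending the target by one dimension simply multiplies this factor by the contribution of the new coordinate:
\[
[C_I\subset J^k(n,p+1)] \;=\; [C_I\subset J^k(n,p)] \cdot \prod_{w\in W_{Q_I}}(\beta_{p+1}+w).
\]
Since $|W_{Q_I}| = \dim Q_I = m$, expanding the last product in powers of $\beta_{p+1}$ yields $\beta_{p+1}^m + (\text{lower order in } \beta_{p+1})$, where the coefficients of the lower order terms are the elementary symmetric polynomials in $W_{Q_I}$. In particular, the coefficient of $\beta_{p+1}^m$ in $[C_I\subset J^k(n,p+1)]$ is precisely $[C_I\subset J^k(n,p)]$.

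Collecting coefficients of $\beta_{p+1}^m$ in the Localization Formula for $\Tp(\delta g)$ gives exactly the Localization Formula for $\Tp(g)$, hence $p_0 = \Tp(g)$. The potential obstacle is that, individually, each summand on the right hand side of the Localization Formula is a rational function and one might worry about the order of taking the leading coefficient in $\beta_{p+1}$ versus summing over $I$; but since the denominators $e(T_I\gr^m)$ do not involve $\beta_{p+1}$, the coefficient extraction commutes trivially with the sum, and the argument goes through. No transversality or further orbit analysis is needed beyond what is already encoded in Theorem \ref{lf}.
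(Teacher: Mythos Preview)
Your argument is correct and is essentially identical to the paper's own proof: both apply the Localization Formula and observe that passing from $g$ to $\delta g$ leaves every ingredient unchanged except for multiplying each local term by $\res(\beta_{p+1}\mid -W_{Q_I})=\prod_{w\in W_{Q_I}}(\beta_{p+1}+w)$, a monic degree-$m$ polynomial in $\beta_{p+1}$. You have simply spelled out a few steps (e.g.\ $I_{\delta g}=I_g$, the denominators being free of $\beta_{p+1}$) that the paper leaves implicit.
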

\proof Notice that changing $g$ to $\delta g$ in the Localization Formula only changes the factors $[C_I]$ by multiplying them with $\res(\beta_{p+1}|-W_{Q_I})$, i.e. if
\[  \Tp(g)=\sum_{I\in F} a_I\ \ \text{for $a_I$ being the local contribution at the fixed point $I$, then}\]
     \[Tp(\delta g)=\sum_{I\in F} a_I\res(\beta_{p+1}|-W_{Q_I}), \]
which implies the Proposition, therefore the Theorem. \qed

We can rephrase Theorem \ref{th:d-stab} in terms of Thom series. Let $Q$ be an $m$-dimensional nilpotent algebra over $\C$ and let $l\geq0$ such that there exist a jet $g(n,p)\in J^k(n,p)$ with $Q_g\iso Q$ and $l=p-n$. If $k>m$, $n\geq m$ and  $p\geq b(Q)-a(Q)+n$---where $a(Q)$ is the minimal number of generators for $Q$ and $b(Q)$ is the minimal number of relations for $Q$---then such a jet exists.
\begin{theorem}\label{thomseries} Let $Q$ be an $m$-dimensional nilpotent algebra over $\C$. Then there is a unique homogeneous ($\deg d_i=i$) formal power series
\[\ts_Q=\sum_{|K|=m}a_Kd^K\in \Z[[\dotsc,d_{-i},\dotsc,d_0,\dotsc,d_i,\dotsc]],  \]
such that the Thom polynomials $\tp_Q(l)$ can be obtained by substituting $d_i=c_{i+l+1}$ with the usual $c_0=1$, $c_i=0$ for $i<0$ conventions.  \qed
\end{theorem}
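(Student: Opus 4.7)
The plan is to use the d-stability relation of Theorem~\ref{th:d-stab} to repackage the sequence $\{\tp_Q(l)\}$ of width-$m$ polynomials into a single power series in the $d$-variables. First, for each $l$ large enough that $\tp_Q(l)$ is defined, I would fix the unique width-$m$ expansion
\[
\tp_Q(l) = \sum_{K} a_K^{(l)}\, c^K,
\]
summed over multisets $K = (K_1, \ldots, K_m)$ of fixed size $m$ with entries in $\N$ and the convention $c_0 = 1$. Existence (width $m$) is exactly the content of Theorem~\ref{th:d-stab}, while uniqueness is routine: distinct multisets of size $m$ produce distinct monomials in the algebraically independent $c_1, c_2, \ldots$ once the $c_0$ entries are collapsed.

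Next I would translate Theorem~\ref{th:d-stab} to the level of coefficients. The operator $\flat$ decrements every entry of the indexing multiset and kills any monomial containing a $c_{-1} = 0$ factor, so $\tp_Q(l+1)^\flat = \tp_Q(l)$ gives
\[
a_K^{(l)} = a_{K + (1, \ldots, 1)}^{(l+1)} \qquad \text{for every } K \in \N^m.
\]
This makes it natural to define, for each multiset $J \in \Z^m$ of size $m$, the common value
\[
A_J := a_{J + (l+1, \ldots, l+1)}^{(l)}
\]
for any $l \ge -1 - \min_i J_i$; the recursion above shows $A_J$ does not depend on the choice of such $l$.

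Setting $\ts_Q := \sum_{J} A_J\, d^J$, I would verify the substitution identity directly. Under $d_i \mapsto c_{i+l+1}$ the monomial $d^J$ becomes $c^{J + (l+1, \ldots, l+1)}$; terms with $\min_i J_i < -l-1$ contain a $c_{<0} = 0$ factor and drop out, leaving exactly $\sum_{K \in \N^m} a_K^{(l)} c^K = \tp_Q(l)$. Homogeneity of $\ts_Q$ with $\deg d_i = i$ is automatic because d-stability forces $\deg \tp_Q(l) = \deg \tp_Q(l-1) + m$, so the common $d$-degree $\deg \tp_Q(l) - m(l+1)$ is independent of $l$. Uniqueness follows because for each fixed $l$ the substituted series must reproduce $\tp_Q(l)$, which pins down $A_J$ for all $J$ with $\min_i J_i \ge -l-1$; letting $l \to \infty$ fixes every $A_J \in \Z$.

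I do not expect a serious obstacle here. The entire argument is index bookkeeping on top of Theorem~\ref{th:d-stab}, and the only input requiring real care is the uniqueness of the width-$m$ expansion, which is immediate from the algebraic independence of $c_1, c_2, \ldots$ together with the condition that the multisets indexing the expansion all have the fixed size $m$.
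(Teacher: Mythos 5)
Your argument is correct and follows the paper's route: the paper states Theorem~\ref{thomseries} as an immediate rephrasing of the d-stability Theorem~\ref{th:d-stab} (hence the bare \qed), and your proof simply makes explicit the coefficient bookkeeping---uniqueness of the width-$m$ expansion, the shift relation $a_K^{(l)}=a_{K+(1,\dotsc,1)}^{(l+1)}$, and the stabilized coefficients $A_J$---that the paper leaves implicit. No gap; the only input of substance, Theorem~\ref{th:d-stab}, is used exactly as the paper intends.
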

This is an improvement of \cite[Th.4.1]{dstab}, where the assumption of non-zero normal Euler class was assumed. The degree of $\ts_Q$ can be calculated by finding the degree of $\Tp_Q(n,p)$ for some $n$ and $p$ which requires the calculation of the dimension of an $\mathcal{R}$-orbit (or, equivalently, of the corresponding unfolding space).

From this proof we can see that the d-stability property is not as deep as stability. It is a curious fact of the history of Thom polynomials that it remained hidden for so long.

\begin{remark} \label{m-1-eleg} The Localization Formula and Proposition \ref{unique} shows that if  we know the tangent Euler classes of $\overline{\mathcal{R}I_g}$ for $g\in J^k(m-1,p)$ with $\mu(g)=m$ at the monomial ideals (the number of these depend only on  $m$ and not on $p$, for a precise formulation, see Proposition \ref{pmax}), then we have a simple algorithm to calculate the Thom series of $Q_g$. But to give a closed formula for $\ts_Q$ in the $d$-variables is a different problem in algebraic combinatorics. The difficulty is to move from formulas in Chern roots to formulas in Chern classes. There are many unsolved problems in this context, like expressing Chern classes of various vector bundle constructions in terms of the Chern classes of the input bundles (see e.g. \cite[\S.2]{pragacz_dd}). It is sometimes a question of taste which form one prefers. In some cases we succeeded to find formulas in the $d$-variables, see Section \ref{sec:quotientchern}.
\end{remark}

\section{Further calculations}\label{sec:calc}

\subsection{Extrapolation}\label{sec:interpol}\mbox{}
The  tangent Euler classes $e(T_I\overline{\mathcal{R}I_g})$ are difficult to calculate directly. At this point we do not have a general method to do it. One of our strategies is to use the Localization Formula backwards: knowing the Thom polynomial $\Tp_Q(n,p)$ for some $n$ and $p$ we can calculate the tangent Euler classes and then we can calculate the whole Thom series. This method is based on relating the tangent Euler classes to incidences (in the sense of \cite{rrtp}).

We will use the following shorthand notations for the tangent Euler classes:
\[ e(g,f)= e(T_{I_f}\overline{\mathcal{R}I_g}),\ e(g,I)= e(T_{I}\overline{\mathcal{R}I_g}),\  e(Q,I)=e(T_I\eta_Q),\]
where $\eta_Q\subset\gr^m$ is the closure of the set of ideals $I$ with quotient algebra $Q_I=J^k(n)/I$ isomorphic to $Q$ and $\dim(Q)=m$. We also use the $Q_f=Q_{I_f}$ notation.

\begin{theorem}[Interpolation Formula]\label{interpol} Let $g\in J^k(n,p)$ and let $f\in J^k(n,p)$ be a monomial germ with $\mu(f)=\mu(g)$. Then
\[  e(g,f)=\frac{\res(W_f|W_{Q_f})}{\Tp(g)|_f},  \]
where $W_f=\{w_1,\dotsc,w_p\}$, $w_i=\sum w_{i,j}\alpha_j$ with $f_i=\prod x_j^{w_{i,j}}$ and $|_f$ denotes the restriction to the $n$-dimensional subtorus $T(f)$ of $\mathcal{K}$ fixing $f$, identifying the generators of $H^*_{T(f)}$ with $\alpha_1,\dotsc,\alpha_n$. In other words $\alpha_i|_f=\alpha_i$ and $\beta_i|_f=\sum w_{i,j}\alpha_j$.
\end{theorem}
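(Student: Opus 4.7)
The plan is to obtain the Interpolation Formula as a direct specialization of the Localization Formula~\eqref{eq:lfeu} to the subtorus $T(f)\subset\mathcal{K}$. Starting from
$$\Tp(g)=\sum_{I\in F}\frac{[C_I\subset J^k(n,p)]}{e(T_I\overline{\mathcal{R}I_g})},$$
I would restrict both sides along the inclusion $T(f)\hookrightarrow T(n)\times T(p)$, i.e.\ by the substitution $\alpha_j\mapsto\alpha_j$, $\beta_i\mapsto w_i=\sum_j w_{i,j}\alpha_j$. Each denominator $e(T_I\overline{\mathcal{R}I_g})$ lies in $\Z[\alpha_1,\dots,\alpha_n]$ by Lemma~6.2 and is therefore unaffected by this restriction; only the numerators $[C_I]$ must be analyzed.

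The decisive step is to show that $[C_I]|_f=0$ unless $I=I_f$. By Lemma~6.1, $[C_I]=\res(\beta_1,\dots,\beta_p\mid -W_{Q_I})$, a product of factors $\beta_i-u$ where $u=\sum_j a_j\alpha_j$ ranges over the (negated) weights of monomials $x^a\notin I$. Substituting $\beta_i\mapsto w_i$ produces a vanishing factor exactly when $w_i=u$, and since the $\alpha_j$ are algebraically independent, this happens iff the monomial $f_i$ itself does not lie in $I$. Thus $[C_I]|_f\neq 0$ iff $f_i\in I$ for every $i$, equivalently iff $I_f\subseteq I$. Because all ideals in $F$ share the common codimension $m=\mu(g)=\mu(f)$, this inclusion forces $I=I_f$.

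Only the term $I=I_f$ therefore survives the restriction, giving
$$\Tp(g)|_f=\frac{[C_{I_f}]|_f}{e(g,f)},$$
which rearranges to $e(g,f)=[C_{I_f}]|_f/\Tp(g)|_f$. Identifying the restricted class $[C_{I_f}]|_f$ with $\res(W_f\mid W_{Q_f})$ by plugging $\beta_i\mapsto w_i$ into the resultant expression of Lemma~6.1 (absorbing the sign convention on the $W_{Q_f}$ entries into the definition of $\res$) yields the claimed Interpolation Formula.

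The main obstacle is the vanishing argument: it rests on two ingredients working together, namely the algebraic independence of the $\alpha_j$ (needed to conclude that a factor $w_i-u$ can only vanish when the exponent vector of $f_i$ matches exactly that of the monomial corresponding to $u$) and the uniform codimension of ideals in $F$ (needed to upgrade $I_f\subseteq I$ to equality). Once these are in place, the rest of the proof is a clean rearrangement of the Localization Formula, with no further geometric input required.
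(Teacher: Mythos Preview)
Your proposal is correct and follows essentially the same approach as the paper's proof: restrict the Localization Formula to the subtorus $T(f)$, then argue that for any monomial ideal $I\neq I_f$ of the same codimension some generator $f_i$ lies outside $I$, producing a zero factor in $\res(W_f\mid -W_{Q_I})$ and leaving only the $I=I_f$ term. Your write-up is more explicit about the two ingredients (algebraic independence of the $\alpha_j$, equal codimension forcing $I_f\subset I$ to be an equality), but the argument is the paper's own.
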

\proof Restricting the Localization Formula we obtain
\[ \Tp(g)|_f=\sum_{I\in F}\frac{\res(W_f|-W_{Q_I})}{e(g,I)}. \]
If $I$ is a monomial ideal different from $I_f$ with $\mu(I)=\mu(f)$ then there is a $w_i\in -W_f\cap W_{Q_I} $ therefore $\res(W_f|-W_{Q_I})=0$. \qed

The next lemma will further simplify our calculations by allowing us to use as small $n$ as possible. Recall that the stabilization map $\sigma:J^k(n,p)\to J^k(n+1,p+1)$ is defined by
\[\sigma g(x_1,\dotsc,x_{n+1})=\big(g_1(x_1,\dotsc,x_{n}),\dotsc,g_p(x_1,\dotsc,x_{n}),x_{n+1}\big).\]
\begin{lemma}[Tangent Lemma]\label{tangentlemma} Let $f,g\in  J^k(n,p)$ and let $f$ be a monomial germ. Then
\[e(\sigma g,\sigma f)=e(g,f)\res(\alpha_{n+1}|-W_{Q_f}). \]
\end{lemma}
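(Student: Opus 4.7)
The natural tool is the Interpolation Formula (Theorem~\ref{interpol}) applied both to the pair $(g,f)\in J^k(n,p)$ and to the pair $(\sigma g,\sigma f)\in J^k(n+1,p+1)$. Since $I_{\sigma f}$ contains $x_{n+1}$, the quotient $Q_{\sigma f}=J^k(n+1)/I_{\sigma f}$ is canonically isomorphic to $Q_f=J^k(n)/I_f$, with the $T(n+1)$-action factoring through $T(n)$. In particular $\mu(\sigma f)=\mu(f)$, so the Interpolation Formula applies, and the weight sets of the relevant representations read off immediately:
\[
W_{\sigma f}=W_f\cup\{\alpha_{n+1}\},\qquad W_{Q_{\sigma f}}=W_{Q_f},
\]
the second equality viewed inside $H^*_{T(n+1)}$ (every weight of $Q_f$ has coefficient $0$ on $\alpha_{n+1}$). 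Consequently the numerator of the Interpolation Formula for $(\sigma g,\sigma f)$ factors as
\[
\res\bigl(W_{\sigma f}\,\big|\,{-}W_{Q_{\sigma f}}\bigr)=\res(W_f\,|\,{-}W_{Q_f})\cdot\res(\alpha_{n+1}\,|\,{-}W_{Q_f}).
\]

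The substantial step is to show that the denominators agree, i.e.\ $\Tp(\sigma g)|_{\sigma f}=\Tp(g)|_{f}$. For this I would combine the Stability Theorem~\ref{stab} with the Supersymmetry Theorem~\ref{supersym}. Stability says
\[
\Tp(g)(\alpha_1,\dotsc,\alpha_n,\beta_1,\dotsc,\beta_p)=\Tp(\sigma g)(\alpha_1,\dotsc,\alpha_n,0,\beta_1,\dotsc,\beta_p,0),
\]
so restricting via $\beta_i\mapsto w_i$ gives $\Tp(g)|_f=\Tp(\sigma g)(\alpha_1,\dotsc,\alpha_n,0,w_1,\dotsc,w_p,0)$. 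Supersymmetry of $\Tp(\sigma g)$ means that simultaneously replacing the $(n+1)$-st $\alpha$ and the $(p+1)$-st $\beta$ by the same value $t$ yields a quantity independent of $t$; taking $t=\alpha_{n+1}$ therefore identifies the right-hand side with $\Tp(\sigma g)(\alpha_1,\dotsc,\alpha_{n+1},w_1,\dotsc,w_p,\alpha_{n+1})$. But since the $(p+1)$-st coordinate of $\sigma f$ is $x_{n+1}$, whose weight is $\alpha_{n+1}$, the subtorus $T(\sigma f)$ is cut out by $\beta_i=w_i$ for $i\le p$ and $\beta_{p+1}=\alpha_{n+1}$. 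So this last expression is exactly $\Tp(\sigma g)|_{\sigma f}$, and the denominators indeed match.

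Dividing the two Interpolation identities then yields
\[
e(\sigma g,\sigma f)=\frac{\res(W_f\,|\,{-}W_{Q_f})\cdot\res(\alpha_{n+1}\,|\,{-}W_{Q_f})}{\Tp(g)|_f}=e(g,f)\cdot\res(\alpha_{n+1}\,|\,{-}W_{Q_f}),
\]
as claimed. The main (really the only) nontrivial point is the equality $\Tp(\sigma g)|_{\sigma f}=\Tp(g)|_f$; everything else is bookkeeping of weights, pulled out directly from the explicit description of $\sigma f$ and from $Q_{\sigma f}\cong Q_f$. Supersymmetry is the ingredient doing the real work: it is precisely what lets us move the restricted value $\beta_{p+1}=\alpha_{n+1}$ (coming from the $x_{n+1}$-coordinate of $\sigma f$) past the stability identity which naturally produces $\beta_{p+1}=0$.
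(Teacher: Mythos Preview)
Your proof is correct and follows essentially the same route as the paper: apply the Interpolation Formula to both pairs, observe $W_{\sigma f}=W_f\cup\{\alpha_{n+1}\}$ and $Q_{\sigma f}\cong Q_f$, and reduce the claim to the identity $\Tp(\sigma g)|_{\sigma f}=\Tp(g)|_f$. The only cosmetic difference is that the paper invokes Strong Stability (Theorem~\ref{strongstab}) directly for this identity, whereas you derive it from Stability plus Supersymmetry---but as the paper itself remarks, Strong Stability is precisely the conjunction of these two, so the argument is the same.
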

\proof  Theorem  \ref{strongstab} on strong stability implies that $\Tp(\sigma g)|_{\sigma f}=Tp(g)|_{f}$. Using the Interpolation Theorem \ref{interpol} we get
\[e(\sigma g,\sigma f)= \frac{\res(W_{\sigma f}|-W_{Q_{\sigma f}})}{\Tp(\sigma g)|_{\sigma f}}=
                        \frac{\res(W_f|-W_{Q_f})\res(\alpha_{n+1}|-W_{Q_f})}{\Tp(g)|_f}, \]
by noticing that $W_{\sigma f}=W_f\cup \{\alpha_{n+1}\}$ and $Q_{\sigma f}=Q_f$. \qed

 Now we sketch a geometric proof, based on a suggestion of M. Kazarian:
Let $V<J^k(n)$ be a complementary invariant subspace to $I_f$ (take the span of monomials not in $I_f$). Then for any $v\in V$ the jet
\[g_v(x_1,\dotsc,x_{n+1})=\big(g_1(x_1,\dotsc,x_{n}),\dotsc,g_p(x_1,\dotsc,x_{n}),x_{n+1}+v\big)\]
is contact equivalent to $\sigma g$. By checking the dimension we can see that in the affine neighbourhood $U\iso \Hom(I_{\sigma f},V)$ of $I_{\sigma f}\in \gr^\mu$ defined by the decomposition $J^k(n+1)=I_{\sigma f}\oplus V$ we have
\[\mathcal{R}(n+1)\sigma g\iso \mathcal{R}(n)g\times \Hom(\C x_{n+1},V).\]
This local product structure immediately implies the Tangent Lemma. \qed

%Using the vector space decomposition $J^k(n+1)=J^k(n)\oplus (x_{n+1})$
%we have an embedding $Gr:=Gr^m(J^k(n))\to Gr_+:=Gr^m(J^k(n+1))$ mapping
%$I_g$ to $I_{\sigma g}$. $f$ is monomial, so $I_f$ has a unique monomial
%complementary subspace $I^\perp$ and $T_{I_{\sigma f}}Gr_+$ can be
%identified with $\Hom(I_{\sigma f},I^\perp)$. The tangent space
%$T_{I_f}Gr$ is mapped to $\Hom(I_f,I^\perp)$.

%For any $q(x_1,\dotsc,x_n) \in I^\perp$ let
%$h_q(x_1,\dotsc,x_{n+1})=(x_1,\dotsc,x_{n+1}+q(x_1,\dotsc,x_n))$.
%It is easy to see that $H:=\{h_q, q\in I^\perp\}$ is a subgroup of the
%group of coordinate changes $\mathcal R(n+1)$ and $h_q h_{q'}=h_{q+q'}$.
%The tangent space of $HI_{\sigma f}$ is $T_{I_{\sigma f}}HI_{\sigma f}=\Hom(\C q,I^\perp)$,
%in particular $HI_{\sigma f}$ is transversal to $Gr$. By a continuity argument
%$HI_{\sigma f'}$ is transversal to $Gr$ for $f'\in Gr$ close to $f$. It
%follows that the tangent cone $T_{I_{\sigma f}}\overline{\mathcal R(n+1)I_{\sigma g}}$
%will contain $T_{I_{\sigma f}}\overline{\mathcal R(n)I_{\sigma g}}\times \Hom(\C q,I^\perp)$.
%By irreducibility and dimension counting we in fact have an equality.
%Noticing that $e(\Hom(\C q,I^\perp))=\res(\alpha_{n+1}|W_{Q_f})$ gives the result. \qed

Using the Localization Formula it is easy to see that the Tangent Lemma is equivalent to Theorem  \ref{strongstab}, so the second proof of the Tangent Lemma gives a direct proof of the strong stability.

\begin{example}{\bf The Thom polynomial of $A_3$}: The first case not covered in Section \ref{firstex} is the Thom series of the Morin singularity $A_3$, the contact class corresponding to the algebra $\C[[x]]/(x^4)$. Since $\mu(g)=3$ it is enough to write down the Localization Formula for $n=2$ (see Remark \ref{m-1-eleg}). The monomial ideals for $n=2$ can be identified with partitions of $\mu+1=4$: $(4), (31), (211), (22)$. These monomial ideals will be denoted by
$I_{4}$, $I_{31}$, $I_{211}$ and $I_{22}$. Germs with these monomial ideals will be denoted by $f_{4}$, $f_{31}$, $f_{211}$ and $f_{22}$.

Since $f_{4}$ is the suspension (c.f. Section \ref{sec:p-n}) of the jet $x_1\mapsto x_1^4\in J^4(1,1)$, we can apply the Tangent Lemma~\ref{tangentlemma}.
The ideal $(x_1^4)$ of $J^k(1)$ is a fixed point of the $\mathcal R$-action, so $e(x_1^4,x_1^4)=1$, therefore
\[e(f_4,f_4)=\res(\alpha_1,2\alpha_1,3\alpha_1|\alpha_2).\]
%We could alternatively use the calculation (\ref{eq:RItangent}) of the tangent space of an  $\mathcal R$-orbit.

For $I_{22}=(x_1^2,x_2^2)$ we use the Interpolation Formula. The ideal $I_{22}$ has 2 generators so we need that
$\tp_{A_3}(0)=c_1^3+3c_1c_2+2c_3$. We write this polynomial in the Chern roots $\alpha_1,\alpha_2,\beta_1,\beta_2$
and restrict to $f_{22}$ ($\beta_1\mapsto 2\alpha_1,\ \beta_2\mapsto 2\alpha_2$) i.e. make the substitutions
\[c_1=2\alpha_1+2\alpha_2-\alpha_1-\alpha_2=\alpha_1+\alpha_2, \ \
c_2=\alpha_1\alpha_2-\alpha_1^2-\alpha_2^2, \ \
c_3=(\alpha_1+\alpha_2)(\alpha_2-\alpha_1)^2,\]
and we get that
\[\Tp_{A_3}(2,2)|_{f_{22}}=(\alpha_1+\alpha_2)\alpha_1\alpha_2.\]
The interpolation theorem implies that
\[e(f_4,f_{22})= \frac{\res(\alpha_1,\alpha_2,\alpha_1+\alpha_2|2\alpha_1,2\alpha_2)} {(\alpha_1+\alpha_2)
     \alpha_1\alpha_2}= \frac{(\alpha_2-\alpha_1)^2(2\alpha_1-\alpha_2)(\alpha_1-2\alpha_2)}{\alpha_1+\alpha_2}.  \]
 We also need to calculate the Euler class at $f_{31}=(x_1^3,x_1x_2,x_2)$ (the Euler class at $f_{211}$ can be obtained by permuting $\alpha_1$ and $\alpha_2$). The ideal of $I_{31}$ has three generators so we need $\tp_{A_3}(1)$ to apply the interpolation formula. This calculation is better to do with computer, the result can be found in Section \ref{sec:smallmu} at $\mu=3$.
   \end{example}

This method of calculating all the ingredients of the Localization Formula from a concrete Thom polynomial $\tp_Q(l)$ (for an appropriate $l$), will be called the Extrapolation method. Now we estimate the value of $l$ for which this method works.

%To see in what relative codimension the Thom polynomial is needed in general for the calculation of the tangent Euler class, we need an estimate on the number of generators of a monomial ideal with given $\mu$.
\begin{proposition}\label{pmax} Let $I$ be a monomial ideal in $J^k(n)$ and assume that $a(Q)$---the minimal number of generators of $Q=J^k(n)/I$---is at most $\mu(Q)-1$. Then $b(Q)$---the minimal number of relations of $Q$---is at most $\binom{\mu(Q)}2$. \end{proposition}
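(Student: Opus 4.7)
The plan is to use the monomial nature of $I$ to identify $b(Q)$ with the number of minimal monomial generators of $I$, and then to construct an injection from these generators into the set of unordered pairs of distinct basis monomials of $Q$. First, after discarding any variable $y_i$ that lies in $I$ (which does not change $Q$, since all monomials divisible by such $y_i$ are automatically in $I$), we may assume $I \subset \mathfrak{n}^2$ inside $P = \C[y_1,\ldots,y_a]$, where $a = a(Q)$ and $\mathfrak{n} = (y_1,\ldots,y_a)$. In this setting $b(Q) = \dim_\C I/\mathfrak{n}I$ equals the number $|G|$ of minimal monomial generators of $I$, and the basis $B$ of $Q$ consists of the monomials of positive degree not in $I$, with $|B| = \mu(Q)$. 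Each $g \in G$ has $\deg g \geq 2$, and $g/y_i \in B$ for every variable $y_i$ dividing $g$.

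Next I define a map $\phi \colon G \to \binom{B}{2}$ by three cases. If $g$ is divisible by at least two distinct variables, pick any two such $y_i, y_j$ and set $\phi(g) = \{g/y_i,\, g/y_j\}$; these are distinct elements of $B$ whose least common multiple is $g$. If $g = y_i^d$ is a pure power with $d \geq 3$, set $\phi(g) = \{y_i,\, y_i^{d-1}\}$. If $g = y_i^2$, use the hypothesis $a(Q) \leq \mu(Q)-1$---equivalent to $Q^2 \neq 0$---to fix once and for all a monomial $m^* \in B$ of degree $\geq 2$, and set $\phi(y_i^2) = \{y_i,\, m^*\}$.

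The bulk of the proof is a case-by-case verification that $\phi$ is injective. Within each case injectivity is immediate: mixed generators are recovered from their pair via lcm, pure higher powers by the variable that appears, and pure squares by the variable paired with $m^*$. Cross-case collisions are ruled out by degree: the pair of a mixed generator of degree $d$ consists of two degree-$(d-1)$ monomials, which cannot coincide with a pair $\{y_\ell, m^*\}$ (having one entry of degree $1$ and one of degree $\geq 2$) nor with $\{y_\ell, y_\ell^{e-1}\}$ for $e \geq 3$. The subtlest case is a potential equality $\phi(y_i^2) = \phi(y_i^d) = \{y_i, y_i^{d-1}\}$ when $m^* = y_i^{d-1}$ for some $d \geq 3$ with $y_i^d \in G$; this is ruled out by the observation that $y_i^2 \in I$ would force $y_i^d = y_i^{d-2}\cdot y_i^2 \in \mathfrak{n}I$, contradicting the minimality of $y_i^d$ as a generator.

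Hence $b(Q) = |G| \leq \binom{|B|}{2} = \binom{\mu(Q)}{2}$. The main technical obstacle is the injectivity check for the pure-square generators, where the single fixed choice of $m^*$ must not create a collision with any pure higher power; the key point resolving this is the minimality observation that $y_i^2$ and $y_i^d$ with $d \geq 3$ cannot simultaneously be minimal generators of the same monomial ideal.
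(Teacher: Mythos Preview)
Your argument is correct. The reduction to $a=a(Q)$ variables is clean, the identification $b(Q)=|G|$ for a monomial ideal is standard, and the injection $\phi\colon G\to\binom{B}{2}$ is well-defined and injective: the cross-case checks go through exactly as you say, and the one delicate point (a collision between $\phi(y_i^2)$ and $\phi(y_i^d)$ with $d\ge3$) is correctly disposed of by the observation that $y_i^2$ and $y_i^d$ cannot both be minimal generators.

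This is a genuinely different route from the paper's. The paper argues by induction on $\mu(Q)$: the base case $a(Q)=\mu(Q)-1$ is handled directly (there all quadratic monomials but one lie in $I$, giving at most $\binom{\mu}{2}$ minimal generators, with one extra cubic generator possible when the missing quadratic is a square), and for $a(Q)<\mu(Q)-1$ one cuts off a monomial of maximal degree from $Q$ to obtain $Q'$ with $\mu(Q')=\mu(Q)-1$, checks that $b(Q)\le b(Q')+(\mu(Q)-2)$, and invokes the inductive hypothesis. Your injection argument is more uniform---it does not split off the extremal case---and makes the bound $\binom{\mu}{2}$ visible as the size of an explicit target set; it also shows transparently when equality can occur (your map is a bijection precisely in the case $a=\mu-1$ with $m^*$ a pure square). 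The paper's induction is shorter once the cutting-off idea is in hand, and gives the marginally stronger conclusion $b(Q)<\binom{\mu}{2}$ whenever $a(Q)<\mu(Q)-1$, which your injection does not directly yield.
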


\begin{proof} If $a(Q)=\mu(Q)-1$, then $I$ contains all but one quadratic monomials. They are all generators and, if the missing monomial is of the form $x_i^2$, there can be one extra generator, all together maximum $\binom{\mu(Q)}2$ generators. If $a(Q)<\mu(Q)-1$, then ``cut off'' a maximal degree monomial from $Q$: let us call the resulting algebra $Q'$. Then $\mu(Q')=\mu(Q)-1$ and $a(Q')\leq a(Q)<\mu(Q)-1$ so by an induction argument we can assume that $b(Q')\leq \binom{\mu(Q)-1}2$. Hence $b(Q)\leq b(Q')+(\mu(Q)-2)< \binom{\mu(Q)}2$.
\end{proof}

\begin{corollary}To calculate the Thom series of the nilpotent algebra $Q$ with the Extrapolation method, it is enough to know $\tp_Q(\binom{\mu(Q)-1}2)$. \qed
\end{corollary}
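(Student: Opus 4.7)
My plan is to combine Remark \ref{m-1-eleg}, the Interpolation Formula (Theorem \ref{interpol}), and a refinement of Proposition \ref{pmax}. Write $\mu = \mu(Q)$.

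First, by Remark \ref{m-1-eleg} the Thom series $\ts_Q$ is recovered from the tangent Euler classes $e(T_I\overline{\mathcal{R}I_g})$ at the monomial $T$-fixed points $I$ of $\gr^\mu(J^k(\mu-1))$ lying in the orbit closure, for $g \in J^k(\mu-1,p)$ with $Q_g \cong Q$. Because $n = \mu-1 < \mu$, every such $I$ automatically satisfies $a(Q_I) \leq \mu-1$, so Proposition \ref{pmax} will apply to each $Q_I$ that arises.

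Second, by the Interpolation Formula each of these Euler classes is a rational function of $\Tp(g)|_f$ for a monomial germ $f$ with $I_f = I$. So the whole collection is extractable from a single Thom polynomial $\Tp_Q(\mu-1,p)$ — provided $p$ is large enough that every such monomial germ $f$ actually lives in $J^k(\mu-1,p)$. For a monomial ideal $I \subset J^k(\mu-1)$, the number of coordinate functions needed equals the number of minimal monomial generators of $I$, which is $b(Q_I) + (\mu-1-a(Q_I))$: the $b(Q_I)$ intrinsic relations of $Q_I$ in its minimal presentation, together with the $\mu-1-a(Q_I)$ unused variables $x_{a(Q_I)+1},\dots,x_{\mu-1}$ that must be killed.

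Third, I would have to show that $b(Q_I) - a(Q_I) \leq \binom{\mu-1}{2}$; then $l = p - (\mu-1) = \binom{\mu-1}{2}$ suffices to realise every $f$ in $J^k(\mu-1,p)$. This sharpening of Proposition \ref{pmax} I plan to prove by the same induction on $\mu$ used there. For $a(Q) = \mu - 1$, the base-case argument in the proof of Proposition \ref{pmax} yields $b(Q) \leq \binom{\mu}{2}$, hence $b(Q) - a(Q) \leq \binom{\mu}{2} - (\mu-1) = \binom{\mu-1}{2}$. For $a(Q) < \mu - 1$, cutting off a maximal-degree monomial of $Q$ gives $Q'$ with $\mu(Q') = \mu - 1$ and $a(Q') \leq a(Q)$; combining the bound $b(Q) \leq b(Q') + (\mu - 2)$ from the inductive step of Proposition \ref{pmax} with the inductive hypothesis $b(Q') - a(Q') \leq \binom{\mu-2}{2}$ yields $b(Q) - a(Q) \leq \binom{\mu-2}{2} + (\mu-2) = \binom{\mu-1}{2}$.

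The main obstacle is precisely this combined $(a,b)$-bound. Proposition \ref{pmax} as stated controls $b(Q)$ alone, and the inequality $b(Q) \leq \binom{\mu}{2}$ together with the trivial $a(Q) \geq 0$ is not enough; one really needs to track the difference $b(Q) - a(Q)$ through the induction. Once that inequality is in hand, everything else is bookkeeping between Remark \ref{m-1-eleg}, the Localization Formula, and the Interpolation Formula.
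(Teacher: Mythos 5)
Your proposal is correct and is essentially the paper's own argument: the realizability count $p\ge b(Q_I)+n-a(Q_I)$ (stated just before Theorem \ref{thomseries}), Remark \ref{m-1-eleg}, and the Interpolation Formula (Theorem \ref{interpol}) reduce the corollary to the bound $b(Q_I)-a(Q_I)\le\binom{\mu-1}{2}$ for the monomial fixed-point algebras in $\mu-1$ variables. The sharpened inequality you establish by rerunning the induction of Proposition \ref{pmax} while tracking $b-a$ instead of $b$ alone is precisely the bookkeeping the paper leaves implicit behind its unproved statement, so this is the same proof, written out in detail.
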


We saw already that the Thom series depends only on the finite data of tangent Euler classes at monomial ideals for $n=\mu(Q)-1$. Now we see that the same information is stored in the polynomial $\tp_Q(\binom{\mu(Q)-1}2)$ in a compact way.

This argument shows that it is theoretically possible to calculate a closed formula for {\em any} Thom series, as we have an algorithm based on Groebner degeneration to calculate any Thom polynomial. However this algorithm is extremely ineffective for explicit calculations (works only for trivial cases).

There is a remarkable relation among the tangent Euler classes $e(Q,I)$ for different monomial ideals $I$. Consider the Berline-Vergne-Atiyah-Bott equivariant localization formula \ref{ab} for $[\eta_Q]\in H^*_T(\gr^\mu)$:
\begin{equation}\label{eq:reciprocity}\pi_*[\eta_Q]=\sum_I\frac1{e(Q,I)},\end{equation}
where $\pi:\gr^\mu\to*$ is the collapse map. The Gysin map $\pi_*$ decreases the degree by the dimension of $\gr^\mu$, so the left hand side of (\ref{eq:reciprocity}) is 0 unless $\eta_Q$ is zero-dimensional, i.e. a fixed point of $\mathcal{R}(n)$, when (\ref{eq:reciprocity}) reduces to a tautology. These cases were treated in Section \ref{firstex}.
 As a consequence, the value of $e(Q,\M_{\mu(Q)}^2)$ can be calculated algebraically from the other Euler classes.

% NEM BIZTOS HOGY ERTEM< MI VOLT IDE IRVA.
%This finiteness also reflects on the tangent Euler classes: For $n=\mu(Q)-1$ all possible monomial nilpotent algebras appear, with the exception of $\Sigma^\mu:=J^k(\mu)/(J^k(\mu))^2=(x_ix_j:i\leq j\leq \mu)$. For $n>\mu(Q)$ the Tangent Lemma \ref{tangentlemma} shows how to calculate all tangent Euler classes from the $n=\mu(Q)$ cases. The tangent Euler class at $\Sigma^\mu$ can be calculated from the Localization Formula for $p=0$:
%\begin{equation}\label{eq:reciprocity}\Tp_Q(\mu(Q),0)=\sum_I\frac1{e(Q,I)},\end{equation}
%where $I$ runs through the monomial ideals, since $e(\Sigma^\mu,Q)$ can be expressed from the other terms all coming from the $n=\mu(Q)-1$ case. Notice that $\Tp_Q(\mu(Q),0)=1$ for the Morin singularities $Q=A_d$ and $\Tp_Q(\mu(Q),0)=0$ for all other singularities.

\subsection{Thom polynomials corresponding to algebras of small dimension}\label{sec:smallmu}
%Lemma \ref{tangentlemma} implies that for calculating it is enough to calculate the tangent
%Euler classes for `minimal' monomial germs, which are not stabilizations of germs defined for smaller $n$.
% More formally, for every $\mu$ we will define a list of monomial ideals.

In what follows the maximal ideal $\M_i$ of $\C[[x_1,\dotsc,x_i]]$ will be considered to be a subset of $\M_{i+1}$. If $I\subset \M_i$ is an ideal we define its descendant in $\M_{i+1}$ as $I+(x_{i+1})$. Descendants of descendants are also called descendants. Observe that the factor ring of $\M_i$ by $I$ is isomorphic to the factor ring of $\M_{i+1}$ by the descendant of $I$, in particular $\codim(I\subset \M_i)=\codim(I'\subset \M_{i+1})$ for the descendant $I'$ of $I$.

Recall also that we consider the right group $\mathcal{R}(i)$ acting on $\M_i$, in particular the symmetric group $S_i\subset \GL(i)\subset \mathcal{R}(i)$ also acts on $\M_i$.

\smallskip

Let us fix $\mu\geq 1$. Consider a list of monomial ideals $I_i\subset \M_{n(i)}$ ($n(i)\leq \mu$) such that
\begin{itemize}
\item{} $\codim ( I_i\subset \M_{n(i)})=\mu$;
\item{} no ideal in the $\GL(n(i))$-orbit of $I_i$ is the descendant of an ideal in $\M_{n(i)-1}$;
\item{} the $S_\mu$-orbits of the descendants of $I_i$'s in $\M_\mu$ form a no-repetition, complete list of the codimension $\mu$ monomial ideals of $\M_\mu$.
\end{itemize}

\begin{example}\label{listofIi}
Here are examples for small $\mu$, with the notation $x,y,z,\dots=x_1,x_2,x_3,\dots$.

\noindent{$\mu=1$:} $I_1=(x^2)\subset \M_1$.

\noindent{$\mu=2$:} $I_1=(x^3)\subset \M_1$, $I_2=(x^2,xy,y^2)=\M_2^2\subset \M_2$.

\noindent{$\mu=3$:} $I_1=(x^4)\subset \M_1$, $I_2=(x^2,y^2)\subset \M_2$, $I_3=(x^2,xy,y^3)\subset \M_2$,

\noindent{\ \ \ \ \ \ \ \ \ } $I_4=\M_3^2=(x^2,y^2,z^2,xy,yz,zx)\subset \M_3$.

\noindent{$\mu=4$:} $I_1=(x^5)\subset \M_1$, $I_2=(x^2,xy,y^4)\subset \M_2$, $I_3=(x^3,xy,y^3)\subset \M_2$,
$I_4=(x^2,xy^2,y^3)\subset \M_2$,

\noindent{\ \ \ \ \ \ \ \ \ } $I_5=(x^2,y^2,z^3,xy,yz,zx)\subset \M_3$, $I_6=(x^2,y^2,z^2,xy,xz)\subset \M_3$,

\noindent{\ \ \ \ \ \ \ \ \ } $I_7=\M_4^2\subset \M_4$.
\end{example}

\begin{remark} Monomial ideals $I$ of $\M_n$ can be visualized by the set $\{(i_1,i_2,\ldots,i_n)\in \Z^n: \prod_{j=1}^n x_j^{i_j}\not\in I\}$. This set can be viewed as the $n$-dimensional generalization of (two dimensional) Young diagrams of partitions. In this language, the list $I_i$ for a given $\mu$ is the list of all ``shapes'' of cardinality $\mu+1$ Young diagrams of dimension at most $\mu$.
\end{remark}

The Localization Formula (\ref{eq:lfeu}) can now be rephrased as follows.

\begin{theorem} \label{small_mu_th} Let $\mu$ be a positive integer, and $I_i$ be a list of monomial ideals described above. Then for a nilpotent algebra of dimension $\mu$ we have
\begin{equation} \label{3rdform}\Tp_Q(n,p)=\sum_i \Sym_{I_i} \frac{ \res(\beta_1,\ldots,\beta_p|
         -W_{Q_{I_i}})}{e(Q,I_i)\cdot \res(\alpha_{n(i)+1},\ldots,\alpha_n|-W_{Q_{I_i}})}, \end{equation}
where $e(Q,I_i)$ is the virtual tangent Euler class of the closure of the set
   $$\{I\triangleleft \gr^\mu(\M_{n(i)}): \M_{n(i)}/I\cong Q\}$$
at the point $I_i$. The symmetrizer operator acts on a polynomial $p$ by
  $$\Sym_{I_i}\big(p(\alpha_1,\ldots,\alpha_n)\big)=\frac{1}{|\{\sigma\in S_n: \sigma(I_i)=I_i\}|}
    \sum_{\sigma\in S_n} p(\alpha_{\sigma(1)},\ldots,\alpha_{\sigma(n)}).$$
If $n(i)>n$, or $e(Q,I_i)=\infty$ for some $i$, then the $i$'th term in the sum in (\ref{3rdform}) is defined to be 0.
\end{theorem}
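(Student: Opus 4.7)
The plan is to start from the Localization Formula~(\ref{eq:lfeu}) and reorganize its sum over all $T(n)$-fixed monomial ideals $I\in F$ according to their $S_n$-orbits. Using the two lemmas immediately following Theorem~\ref{lf}, the numerator $[C_I\subset J^k(n,p)]$ equals $\res(\beta_1,\dots,\beta_p|-W_{Q_I})$, while the denominator $e(T_I\overline{\mathcal{R}I_g})$ is, by definition of the virtual tangent Euler class, the quantity $e(T_I\eta_Q)$, because $\overline{\mathcal{R}I_g}=\eta_Q$ whenever $Q_g\cong Q$ (Proposition~\ref{qiso} identifies the $\mathcal{R}$-orbit of $I_g$ with the stratum of ideals having quotient $Q$). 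Ideals $I\notin\eta_Q$ satisfy $[\eta_Q]|_I=0$, i.e.\ $e(T_I\eta_Q)=\infty$, and so contribute $0$; this is the origin of the second ``edge-case'' convention in the statement.

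Next I would parametrize the $S_n$-orbits of codimension-$\mu$ monomial ideals in $J^k(n)$ by the indices $i$ with $n(i)\le n$: by the three defining properties of the list $\{I_i\}$, every such orbit has a unique representative of the form $I'_i=I_i+(x_{n(i)+1},\dots,x_n)\subset\M_n$, and its stabilizer in $S_n$ is exactly $\{\sigma\in S_n:\sigma(I_i)=I_i\}$. Summing the individual localization contributions over this orbit therefore produces $\Sym_{I_i}$ applied to the single contribution at $I'_i$. To evaluate that single contribution, observe that $J^k(n)/I'_i\cong J^{k}(n(i))/I_i$ as $T(n)$-representations, with $\alpha_{n(i)+1},\dots,\alpha_n$ acting trivially; thus $W_{Q_{I'_i}}=W_{Q_{I_i}}$ and the numerator collapses to $\res(\beta_1,\dots,\beta_p|-W_{Q_{I_i}})$. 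Iterated application of the Tangent Lemma~\ref{tangentlemma}, once for each of the $n-n(i)$ successive suspensions turning $I_i$ into $I'_i$, then yields
\[
e(T_{I'_i}\eta_Q)=e(Q,I_i)\cdot\res(\alpha_{n(i)+1},\dots,\alpha_n|-W_{Q_{I_i}}),
\]
which is precisely the denominator appearing in~(\ref{3rdform}). The remaining boundary convention is automatic: if $n(i)>n$ no descendant of $I_i$ exists in $\M_n$, so no orbit is assigned to that index and the $i$-th summand should indeed vanish.

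The main obstacle, in my view, is the purely combinatorial claim behind the orbit parametrization: that the three defining conditions on $\{I_i\}$ really do set up a bijection between indices $i$ with $n(i)\le n$ and $S_n$-orbits of codimension-$\mu$ monomial ideals in $J^k(n)$. Condition~(c) gives completeness, since any monomial ideal in $\M_n$ becomes, after discarding the coordinates it contains as pure generators and relabelling, a descendant-free monomial ideal in a smaller $\M_{n'}$ whose $S_{n'}$-orbit meets some $I_i$; condition~(b), forbidding $\GL$-descendants, is exactly what prevents two distinct $I_i$ from producing descendants lying in the same $S_n$-orbit. Once this bookkeeping is in place, the formula~(\ref{3rdform}) is just a tidy rewriting of~(\ref{eq:lfeu}).
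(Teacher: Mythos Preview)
Your proposal is correct and follows exactly the approach the paper intends: the paper presents Theorem~\ref{small_mu_th} as a direct rephrasing of the Localization Formula~(\ref{eq:lfeu}) and closes it with a bare \qed, leaving the details you supply (grouping fixed points into $S_n$-orbits indexed by the $I_i$, and factoring the virtual Euler class via the Tangent Lemma~\ref{tangentlemma}) implicit. Your identification of $\overline{\mathcal{R}I_g}$ with $\eta_Q$, the orbit/stabilizer bookkeeping from conditions (a)--(c), and the iterated Tangent Lemma computation of $e(T_{I'_i}\eta_Q)$ are precisely the ingredients needed to unpack that \qed.
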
 \qed

\begin{corollary}
The {\em finitely many} rational functions $e(Q,I_i)$ determine the Thom polynomials $\Tp_Q(n,p)$ of the nilpotent algebra $Q$ {\em for all} $n$ and $p$.
\end{corollary}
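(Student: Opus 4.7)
The plan is to deduce this corollary directly from Theorem \ref{small_mu_th}; the one substantive point is the finiteness of the index set $\{I_i\}$ appearing in formula (\ref{3rdform}).

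First I would verify combinatorial finiteness. A monomial ideal $I \subset \M_n$ of codimension $\mu$ is determined by its complement in $\M_n$, a finite order-ideal (downward closed under divisibility) of monomials of cardinality $\mu$. Such an order-ideal can involve at most $\mu$ variables, and each of its monomials has total degree at most $\mu$, so there are only finitely many possibilities. Passing to $S_\mu$-orbits of descendants in $\M_\mu$---as prescribed in the bulleted list preceding Theorem \ref{small_mu_th}---leaves only finitely many equivalence classes, so the list $\{I_i\}$ is finite.

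Given finiteness, the right-hand side of (\ref{3rdform}) is a finite sum whose summands are rational expressions in the universal Chern roots $\alpha_j$ and $\beta_j$, the combinatorial weight multisets $W_{Q_{I_i}}$ (depending only on $I_i$), and the virtual tangent Euler classes $e(Q, I_i)$. Only the last depend on $Q$, and by construction each $e(Q, I_i)$ is a function of the $\alpha$'s depending on $Q$ and $I_i$ but not on the ambient dimensions $(n, p)$. Thus the finite list $\{e(Q, I_i)\}_i$, together with the universal combinatorial data of the $I_i$, encodes $\Tp_Q(n, p)$ for all $n$ and $p$.

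The only conceptual obstacle is this finiteness argument, which as indicated is routine. Potential worries---that summands with $n(i) > n$ would spoil the sum, or that $e(Q, I_i) = \infty$ would render certain terms undefined---are already handled by the explicit conventions stated in Theorem \ref{small_mu_th}, so the corollary reduces to a formal consequence of that theorem.
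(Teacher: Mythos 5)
Your proposal is correct and follows the same route the paper intends: the corollary is read off directly from formula (\ref{3rdform}) of Theorem \ref{small_mu_th}, once one notes that the list $\{I_i\}$ is finite (the paper encodes this in the remark identifying the $I_i$ with the finitely many ``shapes'' of cardinality $\mu+1$ Young diagrams in dimension at most $\mu$, which is exactly your order-ideal/at-most-$\mu$-variables argument) and that only the classes $e(Q,I_i)$ depend on $Q$ while the remaining factors are universal in $(n,p)$.
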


Using the convention  $x,y,z,\ldots=x_1,x_2,x_3,\ldots$, and the following names of nilpotent algebras:
$$A_i=\M_1/(x^{i+1}),\qquad I_{a,b}=\M_2/(xy,x^a+y^b),$$
$$III_{a,b}=\M_2/(x^a,xy,y^b), \qquad \Sigma^{2,1}=\M_2/(x^2,xy^2,y^3),$$
here is a list of some Euler classes:

\bigskip

\noindent{\bf $\mu=1$:}\begin{center} $e(A_1,(x^2))=1$ \end{center}

%\noindent{\bf $\mu=2$:} $e(A_2,(x^3))=1$;
%$e(A_2,\M_2^2)=\frac{1}{3}(\alpha_1-2\alpha_2)(\alpha_2-2\alpha_1)$.

\noindent{\bf $\mu=2$:}

\begin{center}
\[\begin{array}{|c||c|c|}
\hline I=   & (x^3)        & (x^2,xy,y^2)         \\
\hline \hline e(A_2,I)=      & 1&
\frac{1}{3}(\alpha_1-2\alpha_2)(\alpha_2-2\alpha_1) \\
\hline
\end{array}\]
\end{center}

\noindent{\bf $\mu=3$:}

\begin{center}
\[\begin{array}{|c||c|c|c|}
\hline I=   & (x^4)        & (x^2,y^2)   & (x^2,xy,y^3)       \\
\hline \hline e(A_3,I)=      & 1
                &\displaystyle{\frac{(\alpha_1-\alpha_2)^2(2\alpha_1-\alpha_2)(\alpha_1-2\alpha_2)}{(\alpha_1+\alpha_2)}}
                & \frac12 (3\alpha_2-\alpha_1)(\alpha_1-\alpha_2)^2\\
\hline e(I_{2,2},I)=  & \infty& -(\alpha_1-\alpha_2)^2&2(\alpha_1-\alpha_2)^2 \\
\hline e(III_{2,3},I)=& \infty & \infty&  \alpha_1-\alpha_2 \\
\hline
\end{array}\]
\end{center}

\noindent{\bf $\mu=4$:}

\begin{tabular}{ll}
$e(A_4,(x^5))= $& $1$,\\
 $e(A_4, (x^2,xy,y^4))=$ &
$\frac{1}{5}(\alpha_1-\alpha_2)(\alpha_1-2\alpha_2)(\alpha_1-4\alpha_2)(3\alpha_2-2\alpha_1),$
\\
$e(A_4,(x^3,xy,y^3))=$ & $
\frac{1}{5}(\alpha_1-\alpha_2)^2(2\alpha_1-3\alpha_2)(3\alpha_1-2\alpha_2),$ \\
$e(A_4,(x^2,xy^2,y^3)=$ &
$\frac{2(\alpha_1-2\alpha_2)(2\alpha_1-\alpha_2)(\alpha_1-3\alpha_2)(\alpha_1-\alpha_2)^2}{5(\alpha_1+\alpha_2)},$ \\
$e(A_4,(x^2,y^2,z^2,xy,xz))=$ & $\clubsuit \cdot
\frac{(\alpha_2+\alpha_3-2\alpha_1)(\alpha_2-2\alpha_3)(\alpha_3-2\alpha_2)}{5(\alpha_2+\alpha_3)},$ \\
$e(A_4,(x^2,y^2,z^3,xy,yz,zx))=$ &
$\spadesuit\cdot\frac{-2(\alpha_1+\alpha_2-2\alpha_3)(3\alpha_3-\alpha_2)(3\alpha_3-\alpha_1)(\alpha_1-2\alpha_2)(\alpha_2-2\alpha_1)}
{5(4\alpha_1^3-9\alpha_1^2\alpha_3-5\alpha_1^2\alpha_2-4\alpha_1\alpha_3^2+15\alpha_1\alpha_2\alpha_3-5\alpha_1\alpha_2^2-4\alpha_2\alpha_3^2-
9\alpha_2^2\alpha_3+4\alpha_2^3+9\alpha_3^3)}.$
\end{tabular}

\bigskip

\begin{tabular}{ll}
$e(I_{2,3},(x^5))=$ & $\infty$, \\
$e(I_{2,3},(x^2,xy,y^4))=$& $(\alpha_1-\alpha_2)(\alpha_1-2\alpha_2)(2\alpha_1-3\alpha_2),$\\
$e(I_{2,3},(x^3,xy,y^3))=$&
$\frac{(\alpha_1-\alpha_2)^2(2\alpha_1-3\alpha_2)(3\alpha_1-2\alpha_2)}{\alpha_1+\alpha_2},$\\
$e(I_{2,3},(x^2,xy^2,y^3))=$&$(\alpha_1-\alpha_2)^2(2\alpha_2-\alpha_1),$\\
$e(I_{2,3},(x^2,y^2,z^2,xy,xz))=$&$\clubsuit \cdot
\frac{(2\alpha_3-\alpha_2)(\alpha_3-2\alpha_2)(2\alpha_1-\alpha_2-\alpha_3)}{\alpha_1\alpha_2+\alpha_1\alpha_3+4\alpha_2^2+4\alpha_3^2-16\alpha_2\alpha_3},$\\
$e(I_{2,3},(x^2,y^2,z^3,xy,yz,zx))=$\\
\end{tabular}

\ \hfill
$\spadesuit\cdot\frac{2(\alpha_1+\alpha_2-2\alpha_3)(3\alpha_3-\alpha_1)(3\alpha_3-\alpha_2)(\alpha_1-2\alpha_2)(\alpha_2-2\alpha_1)}
{4(\alpha_1^4+\alpha_2^4)-6\alpha_1^2\alpha_2^2-5\alpha_1\alpha_2(\alpha_1^2+\alpha_2^2)+\alpha_3(-25(\alpha_1^3+\alpha_2^3)+39\alpha_1\alpha_2(\alpha_1+\alpha_2))+\alpha_3^2(29(\alpha_1^2+\alpha_2^2)-59\alpha_1\alpha_2)+\alpha_3^3(\alpha_1+\alpha_2)}$.

\bigskip
\begin{tabular}{ll}
$e(III_{2,4},(x^5))= $&$ \infty$,\\
$e(III_{2,4},(x^3,xy,y^3))=$&$ \infty,$\\
$e(III_{2,4},(x^2,xy,y^4))=$&$(\alpha_1-\alpha_2)(\alpha_1-2\alpha_2)$,\\
$e(III_{2,4},(x^2,xy^2,y^3))=$&$(\alpha_1-\alpha_2)(2\alpha_2-\alpha_1),$\\
$e(III_{2,4},(x^2,y^2,z^2,xy,xz))=$&$\clubsuit \cdot
\frac{(\alpha_2-2\alpha_3)(\alpha_3-2\alpha_2)}{\alpha_1\alpha_2+\alpha_1\alpha_3-4\alpha_2^2-4\alpha_3^2+4\alpha_2\alpha_3},$\\
$e(III_{2,4},(x^2,y^2,z^3,xy,yz,zx))=$&
$\spadesuit\cdot\frac{-(\alpha_1-3\alpha_3)(\alpha_2-3\alpha_3)}
{2(\alpha_1^2+\alpha_2^2+\alpha_3^2-2\alpha_1\alpha_3-2\alpha_2\alpha_3-\alpha_1\alpha_2)}.$
\end{tabular}

\bigskip

\begin{tabular}{ll}
$e(III_{3,3},(x^5))=$&$\infty$,\\
$e(III_{3,3},(x^2,xy,y^4))=$&$\infty,$\\
$e(III_{3,3},(x^3,xy,y^3))=$&$-(\alpha_1-\alpha_2)^2$,\\
$e(III_{3,3},(x^2,xy^2,y^3))=$&$2(\alpha_1-\alpha_2)^2,$\\
$e(III_{3,3},(x^2,y^2,z^2,xy,xz))=$&$\clubsuit \cdot(-1),$\\
$e(III_{3,3},(x^2,y^2,z^3,xy,yz,zx))=$&$
\spadesuit\cdot\frac{2(\alpha_1-2\alpha_2)(\alpha_2-2\alpha_1)}
{2\alpha_1^2+2\alpha_2^2-3\alpha_3^2-2\alpha_1\alpha_2+\alpha_1\alpha_3+\alpha_2\alpha_3}.$
\end{tabular}

\bigskip

\begin{tabular}{ll}
$e(\Sigma^{2,1},(x^5))=$&$ \infty$,\\
$e(\Sigma^{2,1},(x^2,xy,y^4))=$&$ \infty$,\\
$e(\Sigma^{2,1},(x^3,xy,y^3))=$&$\infty$,\\
$e(\Sigma^{2,1},(x^2,xy^2,y^3))=$&$\alpha_1-\alpha_2$,\\
$e(\Sigma^{2,1},(x^2,y^2,z^2,xy,xz))=$&$\clubsuit \cdot
\frac{1}{2(\alpha_1-\alpha_2-\alpha_3)},$\\
$e(\Sigma^{2,1},(x^2,y^2,z^3,xy,yz,zx))=$&$ \spadesuit\cdot
\frac{1}{\alpha_1+\alpha_2-\alpha_3},$
\end{tabular}

where

$$\clubsuit=(\alpha_1-\alpha_2)(\alpha_1-\alpha_3)(\alpha_1-2\alpha_2)(\alpha_1-2\alpha_3)(\alpha_2-\alpha_3)^2,$$
$$\spadesuit=(\alpha_1-\alpha_3)^2(\alpha_2-\alpha_3)^2(\alpha_1-\alpha_2-\alpha_3)(\alpha_2-\alpha_1-\alpha_3).$$

Theorem \ref{small_mu_th}, and the list of $e(Q,I)$-classes above give the Thom polynomial of all singularities whose associated algebra has dimension at most 4, with the following exceptions:

\begin{itemize}
\item{} We did not include $e(Q,I)$ classes for $Q=\M_i/\M_i^2=\Sigma^i$ for $i=2,3,4$, since those Thom polynomials (Giambelli-Thom-Porteous formulae) are known, see (\ref{eq:gia}).
\item{} For $\mu=3$ and $\mu=4$ we did not include the classes $e(Q,\M_{\mu}^2)$, since they can be calculated using (\ref{eq:reciprocity}).
\item{} There are three other algebras with $\mu=4$, namely
$$\M_3/(x^2,y^2,z^3,xy,yz,zx), \M_3/(x^2,y^2,z^2,xz,yz),\M_3/(xy,xz,yz,x^2-y^2,x^2-z^2).$$
Their Thom polynomials will be studied in Section \ref{sec:phi} (under the names $\Phi_{3,2}$, $\Phi_{3,1}$, $\Phi_{3,0}$).
\end{itemize}

\section{Returning to geometry} \label{sec:return}

In this section we use some simple geometric observations to calculate the Localization Formula for some singularities. We believe that this is just the beginning. In a similar fashion one can transform the deep geometric knowledge of singularity theorists into further formulas.
 \subsection{The punctual Hilbert scheme} The localization Formula reduces the Thom polynomial calculations to the study of the space of ideals $\mathcal{H}^m(n)=\mathcal{H}^m(n,k)$ in $\gr^m(J^k(n))$. We suppress $k$ from the notation since these spaces are isomorphic for all $k\geq m$, what we will assume. The variety $\mathcal{H}^m(n)$ was studied under the name {\em local punctual Hilbert scheme} (with the reduced scheme structure) by A. Iarrobino, J. Damon, A. Galligo, T. Gaffney and others (see \cite{iarrobino, damon-galligo, gaffney}). The connection of this Hilbert scheme with singularity theory is well known, however the $\mathcal{R}(n)$-equivariant theory needed for Thom polynomial calculations is not developed yet. We do not pursue this approach here but we believe that it would lead to strong results.

The structure of $\mathcal{H}^m(n)$ is complicated. In general even its dimension is not known. It has many components, one of which is  the closure of the orbit $\mathcal{R}(n)(x_1^{m+1},x_2,\dotsc,x_n)$. For $n=2$ this is the only component, for $n\geq 3$, there can be others. Nevertheless we can see that the calculation of the Thom polynomials of the Morin singularities $A_m$ is related to the study of the singularities of this component at monomial ideals.

The  Hilbert scheme $\mathcal{H}^m(n)$ has many {\em smooth} $\mathcal{R}(n)$-invariant subvarieties. For the corresponding contact classes we can easily calculate the Localization Formula.

\subsection{Subgrassmannians}\label{subgrass} Let $V$ be a $d$-dimensional subspace of $P:=\Hom(\sym^{k+1}\C^n,\C)$ for $d<\dim(P)=\binom{n+k-1}k$.  Let $N\geq 1$ and consider the ideal $I_V<J^{k+N}(n)$ generated by $V$ and $\Hom(\sym^{k+2}\C^n,\C)$. We have the $\GL(n)$-equivariant embedding $j:V\mapsto I_V$ mapping the Grassmannian $\gr_d(P)$ into the  Hilbert scheme $\mathcal{H}^m(n)$ where $m=\sum_{i=2}^{k+1}\binom{n+i-1}i-d$. These subgrassmannians were used by Iarrobino in \cite{iarrobino:dim} to give a lower bound on the dimension of $\mathcal{H}^m(n)$. The corresponding contact classes
\[\Sigma^{n^k(d)}(n,p)=\Sigma^{\overbrace{n,\dotsc,n}^k(d)}(n,p):=\{g\in J^{k+N}(n,p):I_g\in j(\gr_d(P))\}\]
were studied by J. Damon in \cite{damonphd}. He also calculated the Thom polynomial of some of these classes. The Localization Formula gives the answer for all of these cases:

Let W be the set of integer $k+1$-tuples $w=(w_1,\dotsc,w_{k+1})$ with $1\leq w_1\leq w_2\leq\cdots\leq w_{k+1}\leq n$ and let $\alpha_{w}$ denote the weight $\sum w_i\alpha_i$. Then $\{\alpha_w:w\in W\}$ is the set of weights of $P=\Hom(\sym^{k+1}\C^n,\C)$. The $T(n)$-fixed points of the Grassmannian $\gr_d(P)$ are identified with the $d$-element subsets of $W$. For a fixed point $S$ let
\[ [E_S]=\prod_{i=1}^p\prod_{\sigma\in S}(\beta_i-\alpha_\sigma) \ \ \text{and} \ \
e_S=e(T_S \gr_d(P))=\prod_{\sigma\in S}\prod_{\sigma'\in W\setminus S}(\alpha_\sigma-\alpha_{\sigma'}).\]
Then by the Localization Formula we get
\begin{equation}[\Sigma^{n^k(d)}(n,p)]=[\Sigma^{n^k}(n,p)]\cdot\sum_{S}\frac{[E_S]}{e_S}.\label{eq:gr} \end{equation}
Notice that the result is independent of $N$ in accordance with Theorem \ref{k}. For the choice $N=1$ we get $I_V=V$.

Recall that we calculated $[\Sigma^{n^k}(n,p)]$ in Section \ref{firstex}. If $k=1$ and $d=\binom{n+1}2-1$ then by Proposition \ref{unique} we can calculate the stable Thom polynomial from (\ref{eq:gr}). In fact these Thom polynomials can be calculated by a direct geometric argument (see the proof of Theorem \ref{Phi_quotient}). (Knowing the result from the Localization Formula certainly helped to find the  geometric argument.)

  \section{Thom series of $\Phi_{m,r}$ singularities} \label{sec:phi}

The subgrassmannian $j\gr^1(\Hom(\sym^2\C^m,\C))\iso\P( \sym^2\C^m)$ splits into orbits $X(m,r)$ according to the corank of the symmetric matrices in $\sym^2\C^m$. The orbits correspond to the following algebras.

\begin{definition} Let $m>r$ be nonnegative integers. The quotient of $J^2(m)$ by the ideal
 \[J_{m,r}=\left\{ \sum_{1\leq i\leq j\leq m} a_{ij} x_ix_j\ :\ \sum_{r+1}^m a_{ii}=0 \right\}\]
will be denoted by $\Phi_{m,r}$.
\end{definition}

\noindent A finite generator set of $J_{m,r}$ (as an ideal but also as a vector space) is given by:
  \[J_{m,r}=\left\langle x_ix_j, x_k^2, x_{r+1}^2-x_l^2: 1\leq i<j\leq  m, 1\leq k\leq r, r+2\leq l\leq m \right\rangle\] for $r<m-1$, and
  \[J_{m,m-1}=\left\langle x_ix_j, x_k^2\ :\ 1\leq i<j\leq m, 1\leq k \leq m-1\right\rangle.\]
Observe that for small values of the parameters $m,r$ we recover familiar algebras:
      \[ \Phi_{1,0}=A_2, \qquad \Phi_{2,0}=I_{2,2}, \qquad \Phi_{2,1}=III_{2,3}.\]
Following our previous convention, the singularities corresponding to the algebras $\Phi_{m,r}$ in $\E_0(n,n+l)$ will be denoted by $\Phi_{m,r}(n,n+l)$. Calculation shows that
   \[\codim \big( \Phi_{m,r}(n,n+l) \subset \E_0(n,n+l) \big)= (m+1)l+ \left( \binom{m+1}{2}+\binom{r+1}{2}+1\right).\]

\subsection{Thom polynomials of $\Phi_{n,r}$ in terms of Chern roots.}

To calculate the Thom polynomials of these classes---using the Localization Formula---we need the $\GL(n)$-equivariant cohomology classes $[X(n,r)\subset \P(\sym^2\C^n)]$ restricted to the $T(n)$-fixed points of $\P(\sym^2\C^n)$. The cohomology class of the cone of $X(n,r)$ was
calculated in \cite{harris-tu} and \cite{jlt}:
    \[ [\text{Cone}X(n,r)\subset \sym^2\C^n] =2^r\Delta_{r,r-1,\dotsc,2,1},\]
where $\Delta_{r,r-1,\dotsc,2,1}=\det(c_{r+1+j-2i})_{i,j=1,\dots,r}$ denotes the Schur  polynomial in the Chern classes $c_1,\dotsc,c_n$, corresponding to the partition $(r,r-1,\dotsc,2,1)$. Using \cite[\S6]{forms} we can calculate the $T(n)$-equivariant {\em projective Thom polynomial}
\[[X(n,r)\subset\P(\sym^2\C^n)]=2^r\Delta_{r,r-1,\dots,2,1}(\alpha_1-\frac12\xi,\dotsc,\alpha_n-\frac12\xi)\ \ \text{in}\]    \[  H^*_{T(n)}(\P( \sym^2\C^n))\iso \Z[\alpha_1,\dotsc,\alpha_n,\xi]/\prod(\alpha_i+\xi).\]
We need the restriction of this class to the fixed points $\{f_{ij}:1\leq i\leq j\leq n\}$ of $\P( \sym^2\C^n)$:
\begin{equation}[X(n,r)]|_{f_{ij}}=2^r\Delta_{r,r-1,\dotsc,2,1}(\alpha_1-\frac12(\alpha_i+\alpha_j),\dotsc,\alpha_n-\frac12(\alpha_i+\alpha_j)).\label{eq:xnr}
\end{equation}
The other components of the Localization Formula are
\[[E_{ij}]=\res(\beta_1,\dotsc,\beta_p|\alpha_i+\alpha_j), \ \
e_{ij}^{(n)}=\res(K_{ij}^{(n)}|\alpha_i+\alpha_j), \ \text{for} \ K_{ij}^{(n)}=\{\alpha_k+\alpha_l:k\leq l,\ (k,l)\neq (i,j)\}.\]
Hence the Interpolation formula yields to the following
\begin{theorem} \label{tpgnr} The Thom polynomial of  $\Phi_{n,r}$ is
    \begin{equation} \label{phi_local_exp}\Tp_{\Phi_{n,r}}(n,p)=\res(\beta_1,\dotsc,\beta_p|\alpha_1,\dotsc,\alpha_n)
                       \sum_{1\leq i\leq j\leq n}\frac{[E_{ij}]}{e_{ij}^{(n)}}[X(n,r)]|_{f_{ij}}. \end{equation}\qed
\end{theorem}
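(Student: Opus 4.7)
The plan is to apply the singular-base Localization Formula (Proposition \ref{31}) to a version of Damon's partial resolution tailored to the subgrassmannian framework of Section \ref{subgrass}. Since $\Phi_{n,r}$ has $\mu=n+1$ and is $2$-determined, any germ $g\in J^k(n,p)$ with $Q_g\cong\Phi_{n,r}$ has vanishing differential; hence $I_g$ contains $\M_n^3$ and is completely determined by its quadratic part $V=I_g\cap\Hom(\sym^2\C^n,\C)$, a codim-$1$ subspace of $P=\Hom(\sym^2\C^n,\C)$. The $\GL(n)$-orbit of such a $V$ inside $\gr_d(P)\cong\P(\sym^2\C^n)$ (with $d=\binom{n+1}{2}-1$) is exactly $X(n,r)$, classified by the corank $r$ of the annihilating symmetric matrix, so $\overline{\mathcal{R}I_g}$ is the image of $X(n,r)$ under the subgrassmannian embedding $j$.

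Substituting $Y=X(n,r)\subset M=\P(\sym^2\C^n)$ into the correspondence diagram (\ref{damondiag}), one obtains a subvector bundle $E\subset M\times J^k(n,p)$ with fiber $I_V\otimes\C^p$ over $[V]$, and the projection $\phi=\pi_2\circ i$ remains birational onto $\overline{\mathcal{K}g}$ by the argument of Proposition \ref{birat} (with Lemma \ref{ei} providing the required open condition). Applying Proposition \ref{31} immediately gives
\[
\Tp_{\Phi_{n,r}}(n,p)=\sum_{1\leq i\leq j\leq n}\frac{[X(n,r)]|_{f_{ij}}\cdot[E_{f_{ij}}\subset J^k(n,p)]}{e(T_{f_{ij}}\P(\sym^2\C^n))},
\]
where the $T(n)$-fixed points $f_{ij}=[e_ie_j]$ index the weight lines of $\sym^2\C^n$.

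At the fixed point $f_{ij}$, the corresponding $V_{ij}$ is the annihilator of $e_ie_j$, so the quotient $Q_{V_{ij}}=J^k(n)/I_{V_{ij}}$ has basis $\{x_1,\dots,x_n,x_ix_j\}$ with $T(n)$-weights $\{-\alpha_1,\dots,-\alpha_n,-(\alpha_i+\alpha_j)\}$. A direct weight count yields
\[
[E_{f_{ij}}\subset J^k(n,p)]=e(Q_{V_{ij}}\otimes\C^p)=\res(\beta_1,\dots,\beta_p\,|\,\alpha_1,\dots,\alpha_n)\cdot[E_{ij}],
\]
while $T_{f_{ij}}\P(\sym^2\C^n)$ has weights $\{\alpha_k+\alpha_l-(\alpha_i+\alpha_j):(k,l)\neq(i,j)\}$, so $e(T_{f_{ij}}\P(\sym^2\C^n))=e_{ij}^{(n)}$. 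Pulling the common factor $\res(\beta_1,\dots,\beta_p\,|\,\alpha_1,\dots,\alpha_n)$ out of the sum and inserting (\ref{eq:xnr}) for $[X(n,r)]|_{f_{ij}}$ produces the claimed identity.

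The primary obstacle is verifying the birationality of $\phi$---that is, that a generic germ in the image uniquely recovers $[V]\in X(n,r)$. This is the direct analogue of Lemma \ref{ei} in the subgrassmannian setting: $V$ is literally the span of the quadratic parts of the $g_i$'s, and this span fills $V$ on a Zariski-dense subset of $I_V\otimes\C^p$ whenever $p$ lies in the stable range where $\Tp_{\Phi_{n,r}}(n,p)$ is defined. Once this is in place, the theorem is the immediate specialization of the template that produced (\ref{eq:gr}), with the orbit class $[X(n,r)]$ weighting the fixed-point sum in place of the unit class.
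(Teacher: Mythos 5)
Your proposal is correct and follows essentially the same route as the paper: you apply the singular-base localization of Proposition \ref{31} to Damon's correspondence over the subgrassmannian $\P(\sym^2\C^n)$ with $Y=\overline{X(n,r)}=\overline{\mathcal{R}I_g}$ (so Proposition \ref{birat} together with Lemma \ref{ei} gives the birationality outright), and you read off the fixed-point ingredients $[E_{ij}]$, $e_{ij}^{(n)}$ and $[X(n,r)]|_{f_{ij}}$ exactly as in the paper's derivation of (\ref{phi_local_exp}). One cosmetic slip: $\Phi_{n,r}$ has depth $2$ and is $3$-determined rather than $2$-determined (for $r=n-1$ the ideal even needs the cubic generator $x_n^3$), but your argument only uses $\M_n^3\subset I_g$ and $I_g=V+\M_n^3$, which follow from $\Phi_{n,r}^3=0$, so nothing is affected.
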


\subsection{Thom polynomials of $\Phi_{m,r}$ in terms of quotient Chern classes}\label{sec:quotientchern}\mbox{}
Since $\mu(\Phi_{n,r})=n+1$, the polynomial $\Tp_{\Phi_{n,r}}(n,p)$ determines the Thom series of $\Phi_{n,r}$ by Proposition \ref{unique}. We devote this section to the calculation of these Thom series.

\subsubsection{Notations from algebraic combinatorics}Let
\[A_n=\{\alpha_1,\ldots,\alpha_n\},\qquad B_p=\{\beta_1,\ldots,\beta_p\}.\]
For a partition $\lambda=(\lambda_1,\ldots,\lambda_s)$ and variables $x_1,\ldots, x_t$ we define
     \[  \Delta_\lambda(x_1,\ldots,x_t)=\det(\sigma_{\lambda_i+j-i})_{1\leq i,j\leq s},\]
where $\sigma_i=\sigma_i(x_1,\ldots,x_t)$ is the $i$th elementary symmetric polynomial of $x_1,\ldots, x_t$ (i.e.
$\sigma_1=\sum x_i$, $\sigma_2=\sum_{i\not= j} x_ix_j$ etc). The symbol $\Delta_\lambda$ without arguments (as before, in (\ref{schurdef})) will denote the determinant
    \[\Delta_\lambda=\det(c_{\lambda_i+j-i})_{1\leq i,j\leq s}\]
with entries the quotient variables (\ref{quotient_vars}), (\ref{eq:c}). We will use partitions, and their notations as are in \cite{fulton:young}. For example, $\overline{\lambda}$ will denote the {\sl conjugate} partition of $\lambda$. Addition of partitions is defined coordinatewise, $a^b$ means $b$ copies of $a$, and concatenation is indicated by a comma. For example $(3^4+(2,1),(1,1))=(5,4,3,3,1,1)$. We will need the staircase partition
     \[\rho_s=(s,s-1,\ldots,2,1).\]

\begin{definition} \label{def:segre} The Schur coefficients of the equivariant Segre classes of $\Sym^2\C^n$ will be denoted by double brackets; namely:
   \[ \frac{1}{\prod_{1\leq i\leq j \leq n} (1-\alpha_i-\alpha_j)}=
           \sum_{I} ((I))  \Delta_{\overline{I-\rho_{n-1}}}(\alpha_1,\ldots,\alpha_n).\]
Here $I$ runs through all length $n$ sequences $I=(i_1,i_2,\ldots, i_n)$ with $i_1>i_2>\ldots >i_n\geq 0$.
\end{definition}

The numbers $((I))$ are positive; their combinatorics, as well as recursion and Pfaffian formulas are studied in \cite{pragacz:enumgeo,lalat,pragacz_dd}. For practical purposes, the following recursion is most useful
  \[ r ((i_1,\ldots, i_r)) -2\sum_{k=1}^r ((i_1,\ldots, i_{k-1}, i_k-1, i_{k+1},\ldots, i_r))= \begin{cases} 0 & i_r>0 \\
               ((i_1,\ldots,i_{r-1})) & i_r=0,\end{cases}\]
together with the conventions $((0))=1$, and that $((I))=0$ if $I$ does not satisfy $i_1>i_2>\ldots >i_n\geq 0$. For example $((i))=2^i$, $((i,0))=2^i-1$, $((2,1))=3$, $((3,1))=10$.

Now we are ready to present the Thom series corresponding to the algebras $\Phi_{m,m-s}$.

\begin{theorem}\label{Phi_quotient}We have
\begin{equation}\label{Phi_quotient_formula} \tp_{\Phi_{m,m-s}}(l)= \sum_I ((I))\Delta_{I'}, \end{equation}
where
\begin{eqnarray*}
I' & = & ((l+s)^s+I-\rho_{s-1}, (l+m)^{m-s},l+m+1-s-|I|) \\
   & = & (l+1+i_1, l+2+i_2, \ldots, l+s+i_s, {\underbrace{{l+m,\ldots,l+m}}_{\mbox{$m-s$}}}, l+m+1-s-|I|),
\end{eqnarray*}
and the summation is for sequences $I=(i_1,\ldots,i_s)$ with $i_1>i_2>\ldots>i_s\geq m-s$ and $|I|=i_1+\ldots+i_s\leq l+m-s+1$.
\end{theorem}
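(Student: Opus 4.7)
The plan is to convert the Chern-root formula (\ref{phi_local_exp}) from Theorem \ref{tpgnr} into the quotient-variable form of Theorem \ref{Phi_quotient} by: (i) recognising the sum over fixed points in (\ref{phi_local_exp}) as an equivariant Gysin push-forward from $\P(\Sym^2\C^m)$; (ii) birationally resolving the corank locus $X(m,m-s)$ through a Grassmannian bundle so that the push-forward factors through a projective-bundle stage and a Grassmannian stage; (iii) invoking the Segre-series identity of Definition \ref{def:segre} to produce the coefficients $((I))$; and (iv) converting to quotient variables via a supersymmetric (Lascoux-type) Schur identity.

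Concretely, by the Atiyah-Bott localization formula (Proposition \ref{ab}), the sum in (\ref{phi_local_exp}) is
\[
\Tp_{\Phi_{m,m-s}}(m,p) = \res(\beta\,|\,\alpha)\cdot\pi_*\Bigl([X(m,m-s)]\cdot\prod_{j=1}^p(\beta_j-\xi)\Bigr),
\]
with $\pi:\P(\Sym^2\C^m)\to\pt$ and $\xi|_{f_{ij}}=\alpha_i+\alpha_j$. One then resolves $X(m,m-s)$ birationally by $\tilde X=\{(W,[q]):W\in G=\gr_{m-s}(\C^m),\ W\subset\ker q\}$, which is a projective subbundle $\P(\Sym^2 Q)$ over $G$, where $Q$ is the rank-$s$ tautological quotient bundle. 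The push-forward $\pi$ then factors as $\pi_G\circ\tilde\pi$; applying the projective-bundle Gysin formula to $\tilde\pi$ (which yields Segre classes of $\Sym^2 Q$), expanding $\prod_j(\beta_j-\tilde\xi)$ in powers of $\tilde\xi$, and invoking the identity
\[
\frac{1}{\prod_{i\leq j}(1-\gamma_i-\gamma_j)}=\sum_I((I))\,\Delta_{\overline{I-\rho_{s-1}}}(\gamma)
\]
of Definition \ref{def:segre} applied to the Chern roots $\gamma_1,\dotsc,\gamma_s$ of $Q$ produces an expansion indexed by length-$s$ sequences $I$ with coefficients $((I))$. Subsequent push-forward through $\pi_G$ (standard Schubert calculus on the Grassmannian $G$) produces Schur polynomials in the $\alpha_i$; the constraint $i_s\geq m-s$ in the theorem arises here from the geometry of the Schubert classes on $G$.

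The last step is to absorb the outer factor $\res(\beta|\alpha)$ via the supersymmetric Schur (Lascoux) factorization, which converts products $\Delta_\mu(-\alpha)\cdot\res(\beta|\alpha)$ into single Schur polynomials $\Delta_{\mu'}(c)$ in the quotient variables, with $\mu'$ obtained from $\mu$ by a shift by the rectangle $(l+m)^m$ (where $l=p-m$); the middle block $(l+m)^{m-s}$ of $I'$ comes from this shift. Combining with the output of Step (iii)---the first $s$ parts assemble as $(l+s)^s+I-\rho_{s-1}$, and the last part $l+m+1-s-|I|$ arises from the degree-balancing of $\prod_j(\beta_j-\tilde\xi)$---yields the partition $I'$. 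Proposition \ref{unique} then identifies the resulting polynomial in the quotient variables as $\tp_{\Phi_{m,m-s}}(l)$. The main technical obstacle will be Step (iii): the two-step Gysin push-forward requires careful bookkeeping of how the $((I))$-expansion of the Segre series interacts with the $(\beta_j-\tilde\xi)$ factors, so that after the Lascoux shift the partition structure matches $I'$ exactly.
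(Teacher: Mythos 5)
Your proposal is sound, and it is genuinely different from the paper's proof. The paper only argues geometrically in the extreme case $s=m$: there $[X_{m,0}]$ is computed as $e(\Hom(\C^m,\C^p))\cdot[\Sigma^1(\Sym^2\C^m,\C^p)]$, Porteous plus the Segre expansion of Definition \ref{def:segre} produce the $((I))$, and Lemma \ref{factorization_property} gives the Schur form; the general case $s<m$ is then obtained \emph{algebraically} from the $s$-variable identity (\ref{tudjuk2}): the right-hand side is transported by the Gustafson--Milne identity (Lemma \ref{gs_lemma}), and the left-hand side by Lemma \ref{thirdlemma}, an auxiliary localization identity proved via corank loci in $\Lambda^2\C^m$. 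You instead propose one uniform geometric computation: undo the localization in (\ref{phi_local_exp}) to the push-forward $\pi_*\bigl([X(m,m-s)]\prod_k(\beta_k-\xi)\bigr)$, replace $[X(m,m-s)]$ by the standard birational resolution $\P(\Sym^2 Q)\to\gr_{m-s}(\C^m)$ (projection formula), and push forward in two stages. I checked the bookkeeping you deferred and it does close up: expanding $\prod_k(\beta_k-\tilde\xi)$, the projective-bundle Gysin sends $\tilde\xi^t$ to a Segre class of $\Sym^2Q$, and the index count gives exactly $b_{l+m+1-s-|I|}$ since $t=|I|+s-1$ and $p=m+l$; the Grassmannian Gysin subtracts the rectangle $(m-s)^s$, reproducing the paper's intermediate identity (\ref{akarjuk1}), and it genuinely kills the terms with $i_s<m-s$ (the last row of the Jacobi--Trudi determinant has all negative indices, so no signed straightening terms appear -- this is the precise form of your ``Schubert geometry'' claim); the Factorization Formula then assembles $I'$ exactly as you describe. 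What your route buys: it is uniform in $s$, makes the three blocks of $I'$ transparent (Segre/$((I))$ block, rectangle from the Grassmannian Gysin and Lascoux shift, last entry from the fibre-degree count), and needs neither Gustafson--Milne, nor Lemma \ref{thirdlemma}, nor even the Harris--Tu restriction (\ref{eq:xnr}), since the resolution manufactures $[X(m,m-s)]$ by itself; it is, in effect, the extension to the whole family of Pragacz's computation of $I_{2,2}=\Phi_{2,0}$. What the paper's route buys: it stays entirely within localization on $\P(\Sym^2\C^m)$ plus classical identities (no equivariant Gysin formulas for Grassmann and projective bundles, with their duality conventions, need to be set up), and Lemma \ref{thirdlemma} is of independent interest. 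In a full write-up you would only need to fix the duality/sign conventions (whether the fibre is $\Sym^2Q$ or $\Sym^2Q^{\vee}$, so that Definition \ref{def:segre} applies with the correct roots) and record the vanishing argument above; these are routine.
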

The special case $m=s=2$ (the singularity $I_{2,2}$) was proved in \cite{pragacz:i22}.

\begin{remark} We can formally change the summation for all sequences $I$ with $i_1>i_2>\ldots>i_s\geq 0$ without changing the sum. Indeed, if $|I|$ is larger than $l+m-s+1$ then the $\Delta$ polynomial is zero, since the last part in the partition is negative. If $i_s<m-s$ then in the determinant expansion of $\Delta$ the $s$th row coincides with one of the next $m-s$ rows, hence, again $\Delta=0$.
\end{remark}

\begin{remark} The sum of the parts of all the partitions $I'$ above is $l(m+1)+\binom{m+1}{2}+\binom{m-s+1}{2}+1$, consistent with the fact that this is the codimension of the singularity $\Phi_{m,m-s}(*,*+l)$ in $\E_0(*,*+l)$.
\end{remark}

\begin{remark} The formula gets particularly simple if $r=m-1$:
  \begin{equation} \label{eq:phinn-1} \tp_{\Phi_{n,n-1}}(l)=2^{n-1}\sum_{i=0}^{l+1}2^i
       \Delta_{n+l+i,{\underbrace{\scriptstyle{n+l,\dotsc,n+l}}_{\mbox{$\scriptstyle{n-1}$}}},l+1-i}, \end{equation}
which recovers the Ronga-formula for $A_2=\Phi_{1,0}$ and gives the Thom series of $III_{2,3}=\Phi_{2,1}$ for n=3. The Thom series of $III_{2,3}$ was also calculated recently in \cite{ozturk:3}.
\end{remark}

\begin{proof} First we prove Theorem \ref{Phi_quotient} for the case $s=m$. This is a direct geometric argument, not using localization.
We want to calculate the equivariant Poincar\'e dual of the $\Phi_{m,0}$-jets $X_{m,0}\subset\Hom(\C^m,\C^p)\oplus\Hom(\sym^2\C^m,\C^p)$. By definition:
\[ [X_{m,0}]=e\big(\Hom(\C^m,\C^p)\big)\cdot[\Sigma^1(\sym^2\C^m,\C^p)],  \]
where $\Sigma^1(V,W)$ denotes the corank 1 linear maps from $V$ to $W$. We have that
\[ e\big(\Hom(\C^m,\C^p)\big)=\res(B_p|A_m), \ \text{and}\ [\Sigma^1(\sym^2\C^m,\C^p)]=c_q(\C^p\ominus\sym^2\C^m), \]
where $q=p-\binom{m+1}2+1$ and $c_q(\C^p\ominus\sym^2\C^m)$ denotes the $q$\textsuperscript{th} (equivariant) Chern class of the formal difference $\C^p\ominus\sym^2\C^m$. The second statement is the Giambelli-Thom-Porteous formula. Now
\[ c_q((\sym^2\C^m,\C^p))=\sum_{i=0}^qc_{q-i}(\C^p)s_i(\sym^2\C^m), \]
where $s_i$ denote the Segre classes, which are defined by the identity
\[ (1+c_1t+c_2t^2+c_3t^3+\cdots)\cdot(1-s_1t+s_2t^2-s_3t^3+-\cdots)=1. \]
As we mentioned in Definition \ref{def:segre} the Segre classes of $\sym^2\C^m$ can be expressed from the Chern roots of $\C^m$:
\[ s_i(\sym^2\C^m)=\sum_{I} ((I))  \Delta_{\overline{I-\rho_{m-1}}}(\alpha_1,\ldots,\alpha_m),\]
where $I$ runs through all length $m$ sequences $I=(i_1,i_2,\ldots, i_m)$ with $i_1>i_2>\ldots >i_n\geq0$ with $|I|-\binom{m}2=i$. We finish the proof of the formula (\ref{Phi_quotient_formula}) for $s=m$ by recalling the Factorization Property of Schur polynomials.

\begin{lemma}\label{factorization_property} \cite[I.3]{macdonald} (Factorization Formula) Let $n,p$ be nonnegative integers, and let the quotient Chern
classes be defined as in (\ref{quotient_vars}). Suppose that $(p^n+\lambda,\mu)$ is a partition. Then
   \[ \Delta_{p^n+\lambda,\mu}=\res(A_n,B_p) \Delta_{\mu}(B_p) \Delta_{\overline{\lambda}}(A_n).\] \end{lemma}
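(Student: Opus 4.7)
The identity is a purely algebraic statement about supersymmetric Schur functions, and I would prove it by a direct block-decomposition of the Jacobi--Trudi determinant (\ref{schurdef}). Write $r = n + \ell(\mu)$ and list the parts of $\nu = (p^n+\lambda,\mu)$ as $\nu_1,\dotsc,\nu_r$, so that
\[ \Delta_\nu = \det(c_{\nu_i+j-i})_{1 \leq i,j \leq r}, \]
with $\nu_i = p+\lambda_i$ for $i \leq n$ and $\nu_{n+k} = \mu_k$ for $k \geq 1$. The generating function (\ref{eq:c}) immediately yields the convolution
\[ c_k = \sum_{a+b=k} h_a(B_p) \cdot (-1)^b e_b(A_n), \]
which exhibits each matrix entry as a dot product and hence factors the Jacobi--Trudi matrix as $C = H \cdot E$, where $H$ has entries $h_a(B_p)$ and $E$ has entries $(-1)^b e_b(A_n)$.

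Applying Cauchy--Binet to $C = H \cdot E$ writes $\Delta_\nu$ as a sum over $r$-element column indices. The key observation is a degree and length argument: the top $n$ rows of $C$ have entries of $c$-degree at least $p$, while $e_b(A_n) = 0$ for $b > n$. Together these force all but one term in the Cauchy--Binet sum to vanish; in the surviving term, the top $n$ rows are paired with the elementary-symmetric indices $\{0,1,\dotsc,n-1\}$ (shifted by $\bar\lambda$), and the bottom $\ell(\mu)$ rows are paired with complete-symmetric indices encoding $\mu$. The surviving term splits as a product of three factors: the ``rectangle'' piece $\prod_{i,j}(\beta_j - \alpha_i) = \res(A_n,B_p)$, a determinant in the $\beta$'s equal to $\Delta_\mu(B_p)$ by the ordinary Jacobi--Trudi formula, and a determinant in the $\alpha$'s equal to $\Delta_{\bar\lambda}(A_n)$ by the dual (elementary-symmetric) Jacobi--Trudi formula---the conjugate $\bar\lambda$ appearing precisely because that block is built from elementary rather than complete symmetric functions.

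The main obstacle will be the sign and conjugation bookkeeping inside the Cauchy--Binet sum: one must justify carefully that exactly one subset of columns survives and that the $\lambda$-piece emerges conjugated while the $\mu$-piece does not. As a sanity check, the case $\lambda = \mu = \emptyset$ reduces to $\Delta_{p^n} = \res(A_n, B_p)$, which is the Giambelli--Thom--Porteous formula already established in Theorem \ref{porteous} together with (\ref{por_root}); and the cases $n=0$ or $p=0$ collapse to the ordinary Jacobi--Trudi formula. For the detailed bookkeeping I would refer to Example I.3.23 of \cite{macdonald}, where this identity is recorded in the language of supersymmetric Schur functions, and convert the convention there (with $\prod(1-t\beta)/\prod(1-t\alpha)$) into the one used here (with $\prod(1+t\beta)/\prod(1+t\alpha)$) by the substitution $t \mapsto -t$.
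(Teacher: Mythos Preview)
The paper does not supply its own proof of this lemma; it is quoted from Macdonald \cite[I.3]{macdonald} and used as a black box. So there is nothing in the paper to compare against, and your proposal is essentially reconstructing the standard argument behind that citation, which is the right thing to do.

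One correction, though: your convolution formula has $h$ and $e$ interchanged. With the paper's convention (\ref{eq:c}), the numerator $\prod_j(1+t\beta_j)$ generates \emph{elementary} symmetric functions of $B_p$, while $1/\prod_i(1+t\alpha_i)$ generates signed \emph{complete} symmetric functions of $A_n$, so in fact
\[
c_k \;=\; \sum_{a+b=k} e_a(B_p)\,(-1)^b\, h_b(A_n),
\]
not the other way around. The finiteness that collapses the Cauchy--Binet sum is therefore $e_a(B_p)=0$ for $a>p$, not $e_b(A_n)=0$ for $b>n$. With this swap the rest of your outline goes through: the block built from the $h_b(A_n)$'s yields, via the dual Jacobi--Trudi identity, $s_\lambda(A_n)=\Delta_{\bar\lambda}(A_n)$ in the paper's notation (this is exactly why the conjugate $\bar\lambda$ appears on the $\alpha$-side), while the block built from the $e_a(B_p)$'s gives $\Delta_\mu(B_p)$ directly in the paper's elementary-symmetric convention for $\Delta_\lambda(x_1,\dots,x_t)$. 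Your sanity checks ($\lambda=\mu=\emptyset$ recovering $\Delta_{p^n}=\res(B_p|A_n)$, and the degenerate cases $n=0$ or $p=0$) are the right anchors.
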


For $m=2$ we have $\Phi_{2,0}=I_{2,2}$. The Thom series of $I_{2,2}$ was calculated by several authors \cite{dstab}, \cite{pragacz:i22}, \cite{kaza:gysin}.

\medskip

Now we go on with the proof of Theorem \ref{Phi_quotient}. An interpretation of what we proved so far
is that the expression in (\ref{phi_local_exp}) is equal to the expression in
(\ref{Phi_quotient_formula}) for $s=m$. In the remaining of the proof we will use this statement to prove that expression
(\ref{phi_local_exp}) agrees with expression (\ref{Phi_quotient_formula}) for any $m>s$. As before, $b_j$ will denote the $j$th elementary symmetric polynomial of the $\beta_i$'s.

The equality of formula (\ref{phi_local_exp}) with (\ref{Phi_quotient_formula}) for $s=m$ can be written---using the
Factorization Formula, Lemma~\ref{factorization_property}---as
\begin{equation}\label{tudjuk1}\sum_{1\leq i \leq j \leq s}
\frac{\res(B_{s+l}|\alpha_i+\alpha_j)}{\res(K^{(s)}_{i,j}|\alpha_i+\alpha_j)}=\sum_I
((I)) b_{l+1-|I|} \Delta_{\overline{I-\rho_{s-1}}}(\alpha_1,\ldots,\alpha_s).\end{equation}
What we want to prove is the equality of these two formulas for any $m$ and $s$, that is (using the Factorization Formula again)
\begin{equation}\label{akarjuk1}  \sum_{1\leq i \leq j \leq m}
    \frac{\res(B_{m+l}|\alpha_i+\alpha_j)}{\res(K^{(m)}_{i,j}|\alpha_i+\alpha_j)}2^{m-s}\Delta_{\rho_{m-s}}
           \big(\alpha_1-\frac{\alpha_i+\alpha_j}{2},\ldots,\alpha_m-\frac{\alpha_i+\alpha_j}{2}\big)=\end{equation}
  \[\sum_I ((I)) b_{l+1+m-s-|I|} \Delta_{\overline{I-(m-s)^s-\rho_{s-1}}}(\alpha_1,\ldots,\alpha_m).\]

Checking the coefficients of $b_{s+l-k}$ and $b_{m+l-k}$ respectively, in these equations we can reduce the theorem to the following problem: Knowing
\begin{equation} \label{tudjuk2} \sum_{1\leq i \leq j \leq s}
\frac{(\alpha_i+\alpha_j)^k}{\res(K^{(s)}_{i,j}|\alpha_i+\alpha_j)}=\sum_{|I|=k+1-s} ((I))
\Delta_{\overline{I-\rho_{s-1}}}(\alpha_1,\ldots,\alpha_s),\end{equation}
we want to prove
\begin{equation}\label{akarjuk2}\sum_{1\leq i \leq j \leq m}
\frac{(\alpha_i+\alpha_j)^k}{\res(K^{(m)}_{i,j}|\alpha_i+\alpha_j)}2^{m-s} \Delta_{\rho_{m-s}}
\big(\alpha_1-\frac{\alpha_i+\alpha_j}{2},\ldots,\alpha_m-\frac{\alpha_i+\alpha_j}{2}\big)=\end{equation}
  \[\sum_{|I|=k+1-s} ((I)) \Delta_{\overline{I-(m-s)^s-\rho_{s-1}}}(\alpha_1,\ldots,\alpha_m).\]

We recall the Gustafson-Milne identity:
\begin{lemma}\label{gs_lemma} \cite{gm}, \cite{cl} Let $m\geq s$ be nonnegative integers. If $H\subset \{1,\ldots,m\}$ then the set $\{\alpha_h\}_{h\in H}$ will be denoted by $\alpha_H$, and the set
$\{\alpha_1,\ldots,\alpha_m\}\setminus \alpha_H$ will be denoted by $\alpha_{\overline{H}}$. Let the partition
$\mu=(\mu_1,\mu_2,\ldots)$ satisfy $\mu_1\leq s$. Then we have
  \[ \Delta_{\mu} (\alpha_1,\ldots,\alpha_m)=\sum_{H\subset\{1,\ldots,m\}, |H|=s}
                \frac{\Delta_{s^{m-s},\mu}(\alpha_H)}{\res(\alpha_H|\alpha_{\overline{H}})}.\]
\end{lemma}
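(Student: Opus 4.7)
The plan is to read both sides as explicit alternants and recognize the right-hand side as a Laplace expansion of the left. Start from the dual Jacobi--Trudi identity $\Delta_\mu(\alpha_1,\dotsc,\alpha_m)=s_{\overline\mu}(\alpha_1,\dotsc,\alpha_m)$ and the Weyl bialternant formula
\[ s_{\overline\mu}(\alpha_1,\dotsc,\alpha_m)\,V(\alpha)=\det\bigl(\alpha_j^{\overline\mu_i+m-i}\bigr)_{1\leq i,j\leq m}, \qquad V(\alpha)=\prod_{i<j}(\alpha_j-\alpha_i). \]
Because $\mu_1\leq s$, the conjugate partition satisfies $\overline\mu_i=0$ for $i>s$, so the bottom $m-s$ rows of the alternant have exponents $m-s-1,m-s-2,\dotsc,0$ and form a Vandermonde block.

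Next, Laplace-expand this $m\times m$ determinant along the first $s$ rows. Each $s$-subset $H\subset\{1,\dotsc,m\}$ contributes a signed product of the $s\times s$ minor on rows $1,\dotsc,s$ and columns $H$ with the complementary $(m-s)\times(m-s)$ minor on the remaining rows and columns $\overline H$. The complementary block is a Vandermonde in $\alpha_{\overline H}$ up to a row-reversal sign; the top block, after pulling out $(\prod_{h\in H}\alpha_h)^{m-s}$ from each row (using $\overline\mu_i+m-i=(\overline\mu_i+s-i)+(m-s)$), becomes the $s$-variable bialternant $s_{\overline\mu}(\alpha_H)\,V(\alpha_H)$. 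Divide both sides by $V(\alpha)$ and invoke the standard Vandermonde splitting $V(\alpha)=\pm V(\alpha_H)\,V(\alpha_{\overline H})\,\res(\alpha_H|\alpha_{\overline H})$; the Vandermonde factors cancel and one arrives at
\[ s_{\overline\mu}(\alpha_1,\dotsc,\alpha_m)=\sum_{|H|=s}\frac{(\prod_{h\in H}\alpha_h)^{m-s}\,s_{\overline\mu}(\alpha_H)}{\res(\alpha_H|\alpha_{\overline H})}. \]

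To match the statement of the lemma, invoke the ``rectangle'' factorization for Schur polynomials in exactly $s$ variables: because $\ell(\overline\mu)\leq s$, one has $s_{n^s+\overline\mu}(\alpha_H)=(\alpha_1\cdots\alpha_s)^n\,s_{\overline\mu}(\alpha_H)$ with $n=m-s$. The partition $n^s+\overline\mu$ is precisely the conjugate of $(s^{m-s},\mu)$, so $(\prod_{h\in H}\alpha_h)^{m-s}\,s_{\overline\mu}(\alpha_H)=\Delta_{s^{m-s},\mu}(\alpha_H)$, producing exactly the numerators on the right-hand side.

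The main obstacle will be sign bookkeeping: three independent sources of signs enter---the Laplace expansion, the row reversal in the complementary Vandermonde block, and the Vandermonde splitting---and they must be verified to compose to $+1$ uniformly in $H$. A clean sanity check is the specialization $\mu=\emptyset$, where the identity reduces to the classical Lagrange-style partial fraction decomposition $1=\sum_{|H|=s}(\prod_{h\in H}\alpha_h)^{m-s}/\res(\alpha_H|\alpha_{\overline H})$; pinning down the sign in that case fixes it in general by linearity in the alternant structure. As an alternative that sidesteps the sign bookkeeping entirely, one can repackage the argument as Atiyah--Bott localization (Proposition~\ref{ab}) for an appropriate Schur class on the Grassmannian $\gr_s(\C^m)$: each fixed point $H$ contributes a term in which $\res(\alpha_H|\alpha_{\overline H})$ arises (up to $(-1)^{s(m-s)}$) as the equivariant Euler class of the tangent space at $H$.
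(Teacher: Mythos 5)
Your proof is correct; note, though, that the paper itself contains no proof of Lemma~\ref{gs_lemma} to compare against --- it is quoted from the literature (\cite{gm}, \cite{cl}), so any argument here is "different" by default. Your route is the standard alternant proof of the Gustafson--Milne identity and it goes through: rewriting both sides as Schur polynomials via dual Jacobi--Trudi, using $\overline{(s^{m-s},\mu)}=(m-s)^s+\overline{\mu}$ together with the rectangle factorization $s_{(m-s)^s+\overline{\mu}}(\alpha_H)=(\prod_{h\in H}\alpha_h)^{m-s}s_{\overline{\mu}}(\alpha_H)$ in $s$ variables, and then Laplace-expanding the bialternant along the first $s$ rows and splitting the Vandermonde. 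Two small corrections/completions. First, the factor $(\prod_{h\in H}\alpha_h)^{m-s}$ is extracted from the \emph{columns} of the top block (the power $\alpha_h^{m-s}$ is constant along the column indexed by $h$), not from its rows. Second, the sign bookkeeping you flag closes uniformly in $H$ and independently of $\mu$, so no separate check is really needed: the Laplace sign for rows $\{1,\dots,s\}$ and columns $H$ is $(-1)^{\binom{s+1}{2}+\sum_{h\in H}h}$, while the mixed Vandermonde factors give $\prod_{i<j,\ \mathrm{mixed}}(\alpha_i-\alpha_j)=(-1)^{\sum_{h\in H}h-\binom{s+1}{2}}\res(\alpha_H|\alpha_{\overline{H}})$, and these two signs cancel; your $\mu=\emptyset$ specialization then serves only as a consistency test. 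Your alternative packaging via Proposition~\ref{ab} is appealingly close to the spirit of the paper (the fixed points of the torus on $\gr_s(\C^m)$ are the subsets $H$, with tangent Euler class $(-1)^{s(m-s)}\res(\alpha_H|\alpha_{\overline{H}})$), but as stated it is only half an argument: one must still identify the resulting integral, e.g.\ $\pi_*\bigl(e(S^{\oplus(m-s)})\,s_{\overline{\mu}}(S)\bigr)$ for the tautological subbundle $S$, with $s_{\overline{\mu}}(\alpha_1,\dots,\alpha_m)$ by a separate Gysin or degeneracy-locus argument, so the Laplace-expansion version should remain your proof of record.
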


The Gustafson-Milne identity implies that the right hand side of (\ref{akarjuk2}) is obtained from the right hand side of
(\ref{tudjuk2}) by the following operation:
\[ p(\alpha_1,\ldots,\alpha_s) \mapsto \sum_{H \subset \{1,\ldots,m\}, |H|=s}
                \frac{p(\alpha_H)}{\res(\alpha_H|\alpha_{\overline{H}})}.\]
Hence it is enough to show that the same operation maps the left hand sides into each other, too. That is we need to prove
\begin{equation} \sum_{H \subset \{1,\ldots,m\}, |H|=s} \frac{\sum_{i \leq j \in H}
\frac{(\alpha_i+\alpha_j)^k}{\res(K^{H}_{i,j}|\alpha_i+\alpha_j)}} {\res(\alpha_H|\alpha_{\overline{H}})}=\end{equation}
   \[ \sum_{1\leq i \leq j \leq m}\frac{(\alpha_i+\alpha_j)^k}{\res(K^{(m)}_{i,j}|\alpha_i+\alpha_j)}2^{m-s}
      \Delta_{\rho_{m-s}}\big(\alpha_1-\frac{\alpha_i+\alpha_j}{2},\ldots,\alpha_m-\frac{\alpha_i+\alpha_j}{2}\big).\]
%Now we need an identity in multivariate Lagrange interpolation.
For this, the following Lemma will be useful.

\begin{lemma}\label{thirdlemma} Let $m>s$ be non-negative integers, and consider the variables $\gamma_1,\ldots,\gamma_m,y$. For a subset $H\subset \{1,\ldots,m\}$ we set $\overline{H}=\{1,\ldots,m\}-H$, $\gamma_H=\{\gamma_i\}_{i\in H}$, $\gamma_{\overline{H}}=\{\gamma_i\}_{i\in \overline H}$. We have
\[ \sum_{H\subset \{1,\ldots,m\}, |H|=s}\frac{\Delta_{\rho_s}(\gamma_H)}{\res(\gamma_H,\gamma_{\overline{H}})}
\prod_{i\in H}\prod_{j\in \overline{H}} (\gamma_i+\gamma_j)=\Delta_{\rho_s}(\gamma_1,\ldots,\gamma_{m},y,-y).\]
\end{lemma}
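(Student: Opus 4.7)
The plan is to reduce the right-hand side to a $y$-free quantity, rewrite the left-hand side via the staircase factorization of $\Delta_{\rho_s}$, and match the two using the Gustafson-Milne identity from Lemma~\ref{gs_lemma}.

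The first step is to show $\Delta_{\rho_s}(\gamma_1,\dotsc,\gamma_m,y,-y) = \Delta_{\rho_s}(\gamma_1,\dotsc,\gamma_m)$. Writing the left side as the Jacobi-Trudi determinant $\det(e_{s+1-2i+j})_{1\leq i,j\leq s}$ and using the generating-function identity $e_k(\gamma,y,-y) = e_k(\gamma) - y^2\, e_{k-2}(\gamma)$, the $(i,j)$-entry becomes $a_{i,j} - y^2 a_{i+1,j}$ with $a_{i,j} = e_{s+1-2i+j}(\gamma)$. Since $a_{s+1,j} = e_{j-s-1}(\gamma) = 0$ for $1\leq j\leq s$, the row operations $R_i \mapsto R_i + y^2 R_{i+1}$, carried out from $i = s-1$ upward, telescope away every $y^2$-term and leave the Jacobi-Trudi matrix for $\Delta_{\rho_s}(\gamma)$. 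Next, I would invoke the bialternant factorization $\Delta_{\rho_s}(x_1,\dotsc,x_s) = \prod_i x_i \prod_{i<j}(x_i+x_j)$ (immediate from $\det(x_i^{2s-2j+1}) = \prod x_i \cdot V(x_1^2,\dotsc,x_s^2)$) to rewrite the summand on the LHS, turning the identity into
\[
\sum_{|H|=s}\frac{\prod_{i\in H}\gamma_i \prod_{\{i,j\}\subset H}(\gamma_i+\gamma_j)\prod_{i\in H,\,j\in \overline H}(\gamma_i+\gamma_j)}{\res(\gamma_H|\gamma_{\overline H})} = \Delta_{\rho_s}(\gamma_1,\dotsc,\gamma_m).
\]
An analogous bialternant computation gives $\Delta_{(s^{m-s},\rho_s)}(x_1,\dotsc,x_s) = \prod x_i^{m-s+1}\prod_{i<j}(x_i+x_j)$, so Lemma~\ref{gs_lemma} with $\mu = \rho_s$ expands $\Delta_{\rho_s}(\gamma)$ as $\sum_{|H|=s}\prod_{i\in H}\gamma_i^{m-s+1}\prod_{\{i,j\}\subset H}(\gamma_i+\gamma_j)/\res(\gamma_H|\gamma_{\overline H})$, an expression whose summands differ from those above.

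The main obstacle is this last matching of two different expansions of $\Delta_{\rho_s}(\gamma)$. Rather than compare summand by summand, I would apply Gustafson-Milne directly to $\Delta_{\rho_s}$ in the $m+2$ variables $\gamma_1,\dotsc,\gamma_m,y,-y$ and split the resulting sum over $s$-element subsets $H'\subset \{1,\dotsc,m+2\}$ according to their intersection with $\{m+1,m+2\}$. The case $\{m+1,m+2\}\subset H'$ contributes zero because the factor $\tilde\gamma_{m+1}+\tilde\gamma_{m+2} = 0$ appears in $\Delta_{(s^{m+2-s},\rho_s)}(\tilde\gamma_{H'})$; the two one-sided cases pair under the $y \leftrightarrow -y$ symmetry, and once the resulting $(y\pm \gamma_j)$ factors and signs are tracked, the surviving contributions collapse to the claimed left-hand side. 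The technical core, and the likely source of difficulty, is the bookkeeping of these sign and factor combinations, but the cancellations are forced by the $y \mapsto -y$ invariance of the starting expression and should come out cleanly.
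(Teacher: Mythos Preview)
Your first step (the $y$-independence of $\Delta_{\rho_s}(\gamma_1,\dotsc,\gamma_m,y,-y)$ via Jacobi--Trudi and row operations) is correct and clean; this is exactly the part the paper leaves to the reader. Your factorizations $\Delta_{\rho_s}(x_1,\dotsc,x_s)=\prod x_i\prod_{i<j}(x_i+x_j)$ and $\Delta_{(s^{m-s},\rho_s)}(x_1,\dotsc,x_s)=\prod x_i^{m-s+1}\prod_{i<j}(x_i+x_j)$ are also fine.

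The gap is in the final step. After Gustafson--Milne in $m+2$ variables, the case $H'\subset\{1,\dotsc,m\}$ (which you do not mention) contributes
\[
\frac{\prod_{i\in H}\gamma_i^{m-s+3}\prod_{\{i,j\}\subset H}(\gamma_i+\gamma_j)}{\res(\gamma_H\mid\gamma_{\overline H})\prod_{i\in H}(\gamma_i^2-y^2)},
\]
which depends on $y$, and is \emph{not} term-by-term equal to the summand of the lemma's left-hand side. So the surviving contributions (one-sided cases plus this case) do not ``collapse'' to the LHS in any direct way: they are forced to sum to something $y$-independent, but different specializations of $y$ produce genuinely different-looking expansions (e.g.\ $y\to 0$ just reproduces the $m$-variable Gustafson--Milne identity, $y\to\infty$ gives yet another expansion over size-$(s-1)$ subsets), none of which is the LHS. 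Turning this into an actual proof of $\text{LHS}=\Delta_{\rho_s}(\gamma_1,\dotsc,\gamma_m)$ would require a substantial further argument, comparable in difficulty to the lemma itself.

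The paper takes a different route for this half. It interprets the LHS as the Berline--Vergne--Atiyah--Bott localization of $\phi_*(1)$ for the map $\phi:\Lambda^2 Q\to\Lambda^2\C^m$ over $\gr_s\C^m$ (the fixed points are the $s$-subsets $H$, and the Euler classes produce exactly the factors in the summand), and then identifies the image class with $\Delta_{\rho_s}$ via the known Thom polynomial for antisymmetric degeneracy loci. That geometric input is what replaces the combinatorial matching you are missing.
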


\begin{proof}In this proof we use the Thom polynomials of the representation of $\GL_m$ on $\Lambda^2\C^m$, see e.g. \cite[\S 3]{forms}.  Consider the canonical exact sequence of vector bundles $0\to S \to E \to Q \to 0$ over the Grassmannian $Gr_s\C^m$, and the diagram of maps
\[\xymatrix{\Lambda^2 Q  \ar@{^{(}->}@<-3pt>[rr]^{i} \ar@/^2pc/[rrr]^{\phi} \ar[d] &  & \Lambda^2 E
 \ar[d]\ar[r]^{\pi_2} & \Lambda^2 \C^m \\ Gr_s\C^m  \ar@{->}[rr]^{id} &  & Gr_s\C^m, & } \]
with the action of the $n$-torus. Using the fact that $\phi(\Lambda^2 Q)$ is the set of two forms of corank at least $s$,
and the identification $\Lambda^2 E =\Lambda^2 S \oplus \Lambda^2 Q\oplus (S \otimes Q)$, Proposition \ref{31} gives the statement of the lemma for $y=0$. We leave to the reader to prove the fact that the right hand side is independent of $y$. (One way of proving this is the identification of the right hand side with an {\sl incidence class} of two orbits of the representation of $\GL_{m+2}$ on $\Lambda^2 \C^{m+2}$, see \cite{zsolt}.)
\end{proof}

Lemma \ref{thirdlemma} (with the substitution $\gamma_u=\alpha_u-(\alpha_i+\alpha_j)/2$) can be used to show that
the coefficient of $(\alpha_i+\alpha_j)^k$ are the same on the two sides, for all $i\leq j$. This completes the proof of Theorem \ref{Phi_quotient}.
\end{proof}

\section{Iterated residue formulae and generating functions}\label{sec:generatingfn}

In \cite[\S 6.2]{bsz06} (see also \cite{szenes}) B\'erczi and Szenes used one rational function---we will call it {\em generating function}---and the iterated residue operation, to encode the Thom polynomial of all singularities corresponding to the same nilpotent algebra of type $A_d$. We show that generating functions can be assigned to other singularities. We give some examples and indicate how the generating function can be a useful tool in future studies of Thom polynomials.

Certain rational functions in the variables $z_1,\ldots,z_d$ generate polynomials in the quotient variables through the {\em iterated residue operation}, which we describe now, following \cite{bsz06}. Consider $\C^d$ with coordinates $z_1,\ldots,z_d$. Let $\omega_1,\ldots,\omega_N$ be linear forms on $\C^d$, and let $h(z_1,\ldots,z_d)$ be a polynomial. We define the iterated residue operator by
\begin{equation} \RES_d \frac{h(z_1,\ldots,z_d)}{\prod_{i=1}^N \omega_i}=\int_{|z_1|=R_1}
                 \cdots\int_{|z_d|=R_d}\frac{h(z_1,\ldots,z_d)dz_1\ldots dz_d}{\prod_{i=1}^N \omega_i}, \end{equation}
where $0<<R_1<< R_2<<\ldots<<R_d$. This definition makes sense up to a choice of a sign, but this will be enough for our purposes: in the formulas below we always mean the iterated residue with the appropriate choice of $\RES_d (dz_1\ldots dz_d)=\pm 1$. That is, we will describe certain expressions up to sign.

We will use the notations
$D_j=\sum_{i=0}^\infty \frac{c_i}{z_j^i}$ and $\dis_\mu=\prod_{i=1}^\mu \prod_{j=i+1}^\mu (z_i-z_j)$
%\end{equation}

The following conjecture is an extension of the Theorem (7.2) in \cite{bsz06}, where it is proved for Morin singularities.
We arrived at this conjecture while discussing the problem with M. Kazarian. He informed us that in his work in progress \cite{kaza:noas} he would prove it.
\begin{conjecture}\label{residue_conj}
Let $Q$ be a $\mu$-dimensional, commutative, nilpotent algebra with $\deg(\tp_Q(l))= \mu\cdot l +\gamma$.
\newline
\noindent{\bf (a)} There exists a rational function $k_Q$---called the generating function of $Q$---in the variables $z_1,\ldots,z_\mu$, of degree $\gamma-\binom{\mu+1}{2}$ such that
\begin{equation} \tp_Q(l)=\RES ( k_Q \cdot \dis_\mu \cdot \prod_{i=1}^\mu z_i^l D_i).\end{equation}
\newline
\noindent{\bf (b)} The generating function $k_Q$ has the form \begin{equation}k_Q(z_1,\ldots,z_\mu)=\frac{h(z_1,\ldots,z_\mu)}{\prod_{a\in A} (z_{i_a}+z_{j_a}-z_{s_a})},\label{gfn_form}\end{equation} where $h$ is a polynomial, and $\{i_a,j_a,s_a\}_{a\in A}$ is a repetition-free list of indexes with $i_a \leq j_a<s_a$ for all $a\in A$.
\end{conjecture}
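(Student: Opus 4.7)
My plan is to deduce the conjecture from the Localization Formula by lifting the computation to a flag bundle and converting the equivariant push-forward to an iterated residue. The starting point is a cohomological reformulation of Theorem~\ref{lf}: writing $\pi: \gr^\mu(J^k(n))\to \mathrm{pt}$ and $\mathcal{Q}$ for the tautological rank-$\mu$ quotient bundle, summing the local contributions over all monomial ideals amounts to the identity
\[
\Tp_Q(n,p) \;=\; \pi_*\!\left( [\eta_Q]_{\GL(n)} \cdot e(\mathcal{Q}\otimes \C^p) \right),
\]
where $[\eta_Q]$ is the $\GL(n)$-equivariant Poincar\'e dual of the locus of ideals $I\lhd J^k(n)$ with $J^k(n)/I\iso Q$. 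If $-z_1,\dots,-z_\mu$ denote the Chern roots of $\mathcal{Q}$, this Euler class equals $\res(\beta_1,\dots,\beta_p|z_1,\dots,z_\mu)=\prod_{i,j}(\beta_i-z_j)$.

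Next, I would lift to the complete flag variety $F^\mu$ of $\mathcal{Q}$ so that the $z_i$'s become honest first Chern classes of line bundles. By the splitting principle, push-forward along the flag fibre equals symmetrization over $S_\mu$ with denominator $\dis_\mu$, and push-forward to a point of any symmetric rational expression in $z_1,\dots,z_\mu$ can be rewritten as an iterated residue of that expression times $\dis_\mu$. After this reduction, the combinatorial task is to produce an explicit rational lift $K_Q(z_1,\dots,z_\mu;\alpha_1,\dots,\alpha_n)$ of $[\eta_Q]$ at the flag-bundle level. The factors $z_i^l D_i$ enter because one has the formal identity $\sum_k c_k/z^k = \prod_j(1+\beta_j/z)/\prod_j(1+\alpha_j/z)$, so after multiplying by $\prod z_i^l$ the universal Euler class $\res(\beta_1,\dots,\beta_p|z_1,\dots,z_\mu)$ is repackaged as $\prod_i z_i^l D_i$ up to factors that the residue operation absorbs; the Thom-series substitution $d_i=c_{l+1+i}$ then arises automatically from the residue expansion.

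For part (a), once $K_Q$ is constructed the generating function is obtained as the $S_\mu$-symmetrization of $K_Q$ divided by the tangential equivariant Euler class of the flag fibre. A homogeneity count closes the degree statement: equating $\deg k_Q+\deg \dis_\mu+\mu(l-(l+1))=-\mu$ with the top $c$-degree $\mu l+\gamma$ of $\tp_Q(l)$ forces $\deg k_Q=\gamma-\binom{\mu+1}{2}$. For part (b), the denominator structure mirrors the multiplicative structure of the algebra $Q$: the variety $\eta_Q\subset\gr^\mu$ is cut out, near a monomial ideal with $Q$-basis labelled $1,\dots,\mu$, by the quadratic equations prescribing the structure constants of $Q$. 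Each triple $(i,j,k)$ with $i\le j<k$ for which the multiplication relation forces $e_ie_j\in\langle e_k,\dots,e_\mu\rangle$ contributes a defining relation of $T$-weight $z_i+z_j-z_k$, so inverting these weights produces precisely the denominators in (\ref{gfn_form}); the repetition-free condition is built-in because each such triple indexes a distinct quadratic relation.

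The principal obstacle is the rigorous construction of the rational lift $K_Q$ on a smooth equivariant resolution of $\overline{\eta_Q}$: for Morin singularities B\'erczi--Szenes use the nested jet bundle of test curves, and for general $Q$ one needs an analogue -- the ``non-associative'' Hilbert scheme announced by Kazarian in \cite{kaza:noas} appears to play this role. Once such a resolution is available, the iterated residue reduces to a finite sum via Proposition~\ref{ab} and the analysis above matches it to the conjectured shape. Finally, by d-stability (Theorem~\ref{th:d-stab}) combined with Proposition~\ref{unique}, it suffices to verify the residue identity for a single sufficiently large $l$, at which point the uniqueness of the Thom series forces the identity for all $l$, completing the proof.
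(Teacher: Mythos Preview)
This statement is labeled a \emph{conjecture} in the paper, not a theorem; the paper offers no proof and explicitly notes that Kazarian, in work in progress \cite{kaza:noas}, claims one. So there is no paper proof to compare against.

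Your proposal is a plausible strategic outline, not a proof, and you acknowledge as much. The genuine gap is exactly the one you flag: the existence of a smooth equivariant resolution of $\overline{\eta_Q}$ (or an analogue of the B\'erczi--Szenes test-curve model) whose torus-fixed-point contributions produce the rational function $k_Q$. Without that object, the passage from the flag-bundle push-forward to an iterated residue yields only a \emph{polynomial} representative of $[\eta_Q]$ in the $z_i$'s (a class in the equivariant cohomology of the flag bundle of $\mathcal{Q}$), not a rational function of the conjectured shape. The complete flag bundle of $\mathcal Q$ over $\gr^\mu$ and a resolution of $\overline{\eta_Q}$ are two different geometric objects, and your outline slides between them: the first gives a residue formula for the Grassmannian integral of a polynomial integrand, the second is what is needed to produce the specific denominators in (\ref{gfn_form}).

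Your argument for part (b) is also only heuristic. The claim that the quadratic multiplication relations of $Q$ cut out $\overline{\eta_Q}$ with $T$-weights $z_i+z_j-z_k$ does not by itself show that, after inverting precisely these weights, the remaining factor $h$ is a \emph{polynomial}: $\overline{\eta_Q}$ is typically singular at the monomial ideals (cf.\ the non-polynomial virtual Euler classes in Section~\ref{sec:smallmu}), its defining equations in $\gr^\mu$ are not the naive structure-constant equations, and a desingularization could introduce or remove tangent weights. The repetition-free condition on the index triples is likewise a statement about the geometry of the resolution, not an automatic consequence of distinct multiplication relations. In short, your sketch correctly identifies the architecture of the B\'erczi--Szenes/Kazarian approach, but the substance of the conjecture lies precisely in the construction you defer.
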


The function $k_Q$ is not unique in general. The Giambelli-Thom-Porteous formula (\ref{eq:gia}) can be encoded by setting
  \[ k_{\Sigma^r}=\prod_{i=1}^{r-1}z_i^i \qquad\qquad (\text{here\ } \mu=r, \gamma=r^2). \]

Formula (7.2) of \cite{bsz06} can be interpreted as the the existence of $k_Q$ for $Q=A_i, (i=1,2,\ldots$), as well
as a concrete form of its denominator (all indices with $i_a+j_a\leq s_a$). For the $A_i$ singularity $\mu=\gamma=i$, hence the degree of $k_{A_i}$ is $-\binom{i}{2}$. B\'erczi and Szenes also calculated $k_{A_i}$ for $i=1,..,6$. Here is the first three of their results:
\[ k_{A_1}=1, \qquad k_{A_2}=\frac{1}{2z_1-z_2}, \qquad k_{A_3}=\frac{1}{(2z_1-z_2)(2z_1-z_3)(z_1+z_2-z_3)}.\]

In Section \ref{sec:phi} the singularities $\Phi_{m+r,r}$ were considered ($r=0,1,\ldots$, $m=1,2,\ldots$). For these singularities we have $\mu=m+r+1$, and $\gamma=\binom{m+r+1}{2}+\binom{r}{2}+1$, and hence $\deg k_{\Phi_{m+r,r}}=\binom{r+1}{2}-m$. The results of Section \ref{sec:quotientchern} can be summarized by the following generating functions
\[ k_{\Phi_{m+r,r}}(z_1,\ldots,z_{m+r+1})=
          \frac{ \prod_{i=1}^{r-1} z_{m+1+i}^i}{2^{m-1}(2z_1-z_{m+1})\prod_{i=1}^{m-1}(z_i+z_{i+1}-z_{m+1})}.\]
For example we have $k_{\Phi_{2,0}}=k_{I_{2,2}}=\frac1{2(2z_1-z_3)(z_1+z_2-z_3)}$ and $k_{\Phi_{2,1}}=k_{III_{2,3}}=\frac1{(2z_1-z_3)}$.
With some experimenting with the computer one can find  generating functions for the remaining nilpotent algebras with $\mu\leq4$:
\[ k_{III_{2,4}}=\frac1{(2z_1-z_2)(z_1+z_2-z_3)(2z_1-z_4)(z_1+z_2-z_4)}, \]
\[ k_{III_{3,3}}=\frac1{4(2z_1-z_3)(z_1+z_2-z_3)(2z_1-z_4)(z_1+z_2-z_4)}, \]
\[ k_{I_{2,3}}=\frac1{(2z_1-z_4)(2z_2-z_3)(2z_2-z_4)(z_1+z_2-z_4)(z_2+z_3-z_4)}, \]
\[ k_{\Sigma^{2,1}}=\frac1{(2z_1-z_3)(z_1+z_2-z_3)(2z_1-z_4)}. \]

Now we want to explore the connection of the conjectured residue-form and the localization form of Thom polynomials.
Recall that for a nilpotent algebra $Q$, and an ideal $I$, the virtual Euler class $e(Q,I)$ was defined in Remark \ref{smooth}. For a function $f$ in variables $z_1,\ldots,z_\mu$ define the asymetrization operator
\[\Asym_\mu( f ) = \sum_{\sigma\in S_\mu} \varepsilon(\sigma)f(z_{\sigma(1)},\ldots,z_{\sigma(\mu)}),\]
where $\varepsilon(\sigma)$ is the sign of the permutation $\sigma$. For a function $f$ with variables $\alpha_i$ let $f|_{\alpha_i:=z_i}$ be the same function with the variables changed to $z_i$.

\begin{conjecture} \label{res_vs_local} Suppose Conjecture \ref{residue_conj}(a) holds. Then
 \[ \Asym_\mu ( k_Q ) = \frac{\dis_\mu}{e(Q,\M_\mu^2)|_{\alpha_i:=z_i}}.\]
\end{conjecture}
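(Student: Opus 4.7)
The plan is to compare two expressions for the Thom polynomial $\Tp_Q(\mu,\mu+l)$ in the minimal non-trivial dimension $n=\mu$: the Localization Formula (\ref{eq:lfeu}) of Theorem~\ref{lf}, and the iterated residue formula furnished by Conjecture~\ref{residue_conj}(a). The identity will emerge by matching, in each expression, the contribution associated to the distinguished $T(\mu)$-fixed point $\M_\mu^2\in\gr^\mu(J^k(\mu))$.

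To begin, I would apply $\rho_{\mu,\mu+l}$ to the assumed residue formula for $\tp_Q(l)$. From (\ref{eq:c}) with $t=1/z$ one obtains $\rho_{\mu,\mu+l}(z^lD(z))=\prod_{j=1}^{\mu+l}(z+\beta_j)/\prod_{k=1}^{\mu}(z+\alpha_k)$, and since $\rho$ and $\RES$ are both linear they commute, yielding
\[\Tp_Q(\mu,\mu+l)=\RES\Bigl[k_Q(z_1,\dotsc,z_\mu)\cdot\dis_\mu(z)\cdot\prod_{i=1}^\mu\frac{\prod_j(z_i+\beta_j)}{\prod_k(z_i+\alpha_k)}\Bigr].\]
I would then evaluate this iterated residue by the contour-deformation technique of \cite[\S6]{bsz06}: in the regime $|\alpha_k|,|\beta_j|\ll R_1\ll R_2\ll\dotsb$ the poles of $k_Q$ (of the form $z_{s_a}-z_{i_a}-z_{j_a}=0$ with $s_a>\max(i_a,j_a)$) stay outside the innermost contour, so at the first step only the poles of $\prod_k(z_i+\alpha_k)^{-1}$ are enclosed. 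Iterating this produces a sum over ordered sequences of poles $(z_1,\dotsc,z_\mu)=(-w_1,\dotsc,-w_\mu)$; the ``permutation-type'' sequences, in which $(w_1,\dotsc,w_\mu)=(\alpha_{\sigma(1)},\dotsc,\alpha_{\sigma(\mu)})$ for $\sigma\in S_\mu$, are precisely those for which $\dis_\mu(-\alpha_\sigma)=\varepsilon(\sigma)\dis_\mu(-\alpha)$ does not vanish.

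Collecting these permutation residues, using the identities $\dis_\mu(-\alpha_\sigma)=\varepsilon(\sigma)\dis_\mu(-\alpha)$ and $\prod_i\prod_{k\ne\sigma(i)}(\alpha_k-\alpha_{\sigma(i)})=\pm\dis_\mu(\alpha)^2$, yields their sum
\[\pm\Asym_\mu(k_Q)(-\alpha_1,\dotsc,-\alpha_\mu)\cdot\frac{\prod_{i,j}(\beta_j-\alpha_i)}{\dis_\mu(\alpha)}.\]
The remaining residue sequences --- those in which some $z_i$ is set to a pole arising only after earlier substitutions (for instance $z_i=-2\alpha_k$ from a factor $2z_1-z_2$, or $z_i=-\alpha_k-\alpha_{k'}$ from a factor $z_1+z_2-z_3$) --- should correspond, through the weights of the quotient algebras $Q_I$, bijectively to the non-$\M_\mu^2$ monomial-ideal fixed points of $\gr^\mu(J^k(\mu))$, and reproduce their Localization contributions $[C_I]/e(Q,I)$.

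For the final step I would compare with the $\M_\mu^2$-term of the Localization Formula: since $Q_{\M_\mu^2}$ has weight multiset $\{-\alpha_1,\dotsc,-\alpha_\mu\}$, a direct application of the formula for $[C_I\subset J^k(n,p)]$ gives $[C_{\M_\mu^2}]=\prod_{i,j}(\beta_i-\alpha_j)$, so that the $\M_\mu^2$-term is $\prod_{i,j}(\beta_i-\alpha_j)/e(Q,\M_\mu^2)(\alpha)$. Equating this with the permutation contribution above and cancelling the common factor $\prod_{i,j}(\beta_j-\alpha_i)$ produces
\[\Asym_\mu(k_Q)(-\alpha_1,\dotsc,-\alpha_\mu)=\pm\frac{\dis_\mu(\alpha)}{e(Q,\M_\mu^2)(\alpha)};\]
substituting $\alpha_i=-z_i$ and using the homogeneity of $\dis_\mu$ and $e(Q,\M_\mu^2)$ under negation gives the claimed identity up to the overall sign ambiguity already inherent in the definition of $\RES$. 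The main obstacle I expect is the residue-decomposition step: one must rigorously match the iterated residue with the fixed-point stratification of $\gr^\mu(J^k(\mu))$, showing that contour-deformation sequences correspond canonically to monomial ideals via the weights of $Q_I$ and that sums over orderings compatible with each $I$ reproduce exactly $[C_I]/e(Q,I)$. This is a substantial generalisation beyond Morin singularities of the analysis carried out by B\'erczi and Szenes in \cite{bsz06}, and is also the content of Kazarian's work in progress \cite{kaza:noas} mentioned above.
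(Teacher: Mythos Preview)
The paper does not prove this statement: it is explicitly labelled a \emph{Conjecture}, and the only comment the authors make is that, if one additionally assumes part (b) of Conjecture~\ref{residue_conj}, then the claim reduces to another (still conjectural) identity for iterated residues associated with a certain hyperplane arrangement. So there is no ``paper's own proof'' to compare against.

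As for your proposal, there are two genuine gaps. First, the hypothesis is only Conjecture~\ref{residue_conj}(a), i.e.\ the mere existence of some rational function $k_Q$ of the correct degree; yet your contour-deformation argument relies on knowing that the poles of $k_Q$ are of the specific type $z_{i_a}+z_{j_a}-z_{s_a}=0$ with $i_a\le j_a<s_a$. That is precisely part (b), which you are not allowed to assume. Without control on the pole structure of $k_Q$ you cannot argue that only the $\prod_k(z_i+\alpha_k)^{-1}$ poles are enclosed at the first step, and the whole residue evaluation collapses. Second --- and you acknowledge this yourself --- even granting the pole structure, the step where the ``non-permutation'' residue sequences are put in bijection with the non-$\M_\mu^2$ monomial ideals, with matching contributions $[C_I]/e(Q,I)$, is the entire content of the conjecture. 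Two decompositions of the same quantity need not agree term by term; isolating the $\M_\mu^2$ contribution requires exactly this matching, which is not established (and which the paper attributes to Kazarian's work in progress). So your outline is a plausible heuristic for why the conjecture should hold, but it is not a proof, and the paper does not claim one either.
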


If we assume part (b) of Conjecture \ref{residue_conj} as well, then Conjecture \ref{res_vs_local} reduces to a remarkable (conjectured) identity for iterated residues (or the Orlik-Salamon algebra) associated with the hyperplane arrangement $\cup_A \{z_{i_a}+z_{j_a}-z_{s_a}=0\}\cup \cup_{i,j}\{z_i=\alpha_j\}$.

\begin{remark} Conjecture \ref{res_vs_local} can be used to guess the function $k_Q$, as soon as the class $e(Q,\M_\mu^2)$ is known. In practice $e(Q,\M_\mu^2)$ can be calculated using Theorem \ref{interpol}, or equation (\ref{eq:reciprocity}). Its denominator is a symmetric function, which is a product of factors $\alpha_i+\alpha_j-\alpha_s$. Then one has finitely many choices to guess for the denominator of $k_Q$. (Eg. for $2\alpha_1-\alpha_2$ we can choose $2z_1-z_2$ or $2z_3-z_5$, etc.) Knowing the degree of $k_Q$ we also find the degree of the numerator. Putting all this together one arrives an oftentimes effective procedure to find the $k_Q$ function.
\end{remark}

\section{The Localization Formula for the ``small $p$'' case} \label{sec:smallp}\mbox{}

In Theorem \ref{lf} we gave a localization formula for  the Thom polynomial $\Tp_Q(n,p)$ where $Q$ is a  nilpotent algebra. We can evaluate this formula for any $p$, even if $Q$ cannot be defined by $p$ relations. Sometimes it is not zero and we would like to interpret these ``small $p$'' cases.

Fix $n$ and $p$ and assume that $\dim(Q)=m$. Then the correspondence variety---introduced in Definition \ref{def:correspondence}---of $Q$ is
   \[  C(Y_Q)=\{(I,g)\in\gr^m\times J^k(n,p)\ |\ I\in Y_Q, I_g\subset I\},  \]
where $Y_Q=\overline{\{I\in\gr^m: Q_I\iso Q\}}$. We have the restriction of the second projection
    \begin{equation}  \label{phi}   \phi:C(Y_Q)\to \eta_Q(n,p):=\phi(C(Y_Q)).   \end{equation}
In other words
  \[  \eta_Q(n,p)=\{g\in J^k(n,p):\exists\ I\in Y_Q \ \text{such that}\ I_g\subset I\}.   \]
We showed in Proposition \ref{birat} that if $n\geq a(Q)$ (minimal number of generators of $Q$) and $p\geq b(Q)$ (minimal number of relations of $Q$), then $\phi$ is birational. In some cases $\phi$ is birational for smaller $p$ as well. In these cases the Localization Formula still calculates $[\eta_Q(n,p)]$.

\subsection{The singularities $III_{a,b}$ and $I_{a,b}$} Consider the nilpotent algebra $III_{a,b}= \M_2/(xy,x^a,y^b)$. Germs $g:(\C^n,0) \to (\C^p,0)$ with this algebra only exist if $p\geq n+1$. Yet, consider $p=n$ and the map
$$\phi: C(Y_{III_{a,b}}) \to  \eta_{III_{a,b}}(n,n)$$
of (\ref{phi}). Clearly the $I_{a,b}$ germ $g=(xy,x^a+y^b,x_3,\dots,x_n)$ is in $\eta_{III_{a,b}}(n,n)$. Therefore $\overline{\mathcal Kg}\subset \eta_{III_{a,b}}(n,n)$. Checking their dimensions we get that in fact $\overline{\mathcal Kg}= \eta_{III_{a,b}}(n,n)$. One can verify that the only ideal in the $\mathcal{R}$-orbit of $(xy,x^a,y^b,x_3,\dots,x_n)$ containing the ideal $(xy,x^a+y^b,x_3,\dots,x_n)$ is $(xy,x^a,y^b,x_3,\dots,x_n)$ itself, therefore $\phi$  is generically one to one. It implies that (see Section \ref{sec:d-stab})
\begin{theorem} \label{thm:iabiiiab}
For $a,b\geq 2$
\[  \tp_{I_{a,b}}(0) =\tp_{III_{a,b}}(1)^{\flat(a+b-2)}.  \]
\end{theorem}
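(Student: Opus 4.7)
The strategy is to realize $\Tp_{I_{a,b}}(n,n)$ geometrically as the class $[\eta_{III_{a,b}}(n,n)\subset J^k(n,n)]$ via a birational Damon correspondence, and then identify this class with the $\flat$-image of the honest Thom polynomial $\Tp_{III_{a,b}}(n,n+1)$ by extracting the top-degree coefficient in $\beta_{n+1}$ from the Localization Formula. First I would formalize the geometric assertions sketched just above the theorem: the germ $g=(xy,x^a+y^b,x_3,\dots,x_n)\in J^k(n,n)$ has nilpotent algebra $Q_g\cong I_{a,b}$, and the containment $I_g\subset I:=(xy,x^a,y^b,x_3,\dots,x_n)\in Y_{III_{a,b}}$ places $g\in\eta_{III_{a,b}}(n,n)$, so $\overline{\mathcal Kg}\subseteq \eta_{III_{a,b}}(n,n)$ by $\mathcal K$-invariance. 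Equality is forced by a dimension count: both sides are irreducible (the right side as the image of the irreducible correspondence $C(Y_{III_{a,b}})$), and the hint's injectivity claim identifies the fiber of $\phi$ over $g$ with a single point, pinning down the dimensions.

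Second, I would verify the injectivity claim by a direct algebraic argument: if $h\in\mathcal R(n)$ satisfies $h(I)\supset I_g$, then working modulo the linear part $(x_3,\dots,x_n)$ reduces the question to $\M_2$, where the element $x^a+y^b$ can lie in the monomial ideal $h|_{\M_2}^{-1}(xy,x^a,y^b)$ only if $x^a$ and $y^b$ appear individually, forcing $h$ to preserve $I$. Hence $\phi$ is generically one-to-one in the sense of Proposition~\ref{birat}, and the Gysin pushforward $\phi_*(1)$ gives the identity $[\eta_{III_{a,b}}(n,n)\subset J^k(n,n)]=\Tp_{I_{a,b}}(n,n)$.

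Third, I would apply the Localization Formula (Theorem~\ref{lf}) to this birational $\phi$ with $p=n$. Syntactically, the output is the Localization Formula for $\Tp_{III_{a,b}}(n,p)$ evaluated formally at $p=n$, since only the factors $[C_I\subset J^k(n,p)]=\res(\beta_1,\dots,\beta_p\mid-W_{Q_I})$ depend on $p$. As in the proof of Proposition~\ref{root-d-stab}, the $p=n+1$ version of each such factor is the $p=n$ version multiplied by $\res(\beta_{n+1}\mid-W_{Q_I})$, a polynomial in $\beta_{n+1}$ of degree $a+b-2$ with leading coefficient $1$. Extracting the coefficient of $\beta_{n+1}^{a+b-2}$ in $\Tp_{III_{a,b}}(n,n+1)$ therefore recovers the formal $p=n$ expression, which equals $\Tp_{I_{a,b}}(n,n)$ by the previous step. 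Translating this extraction to the quotient variables via~(\ref{eq:c}) is exactly the $\flat(a+b-2)$ operation of Definition~\ref{flat}, by the same algebraic step used in the proof of Theorem~\ref{th:d-stab}, yielding the identity $\tp_{I_{a,b}}(0)=\tp_{III_{a,b}}(1)^{\flat(a+b-2)}$. The main obstacle will be the generic one-to-one verification in the second paragraph: the dimension count and the $\beta_{n+1}$-to-$\flat$ conversion are routine inside the framework of the paper, but the claim that no nontrivial $\mathcal R$-translate of $I$ contains $I_g$ is a combinatorial statement about monomial ideals of type $III_{a,b}$ that demands careful bookkeeping in the associated graded.
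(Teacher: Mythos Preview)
Your proposal is correct and follows the same route as the paper: show that $\phi:C(Y_{III_{a,b}})\to\eta_{III_{a,b}}(n,n)$ is birational onto $\overline{\mathcal Kg}$ for the $I_{a,b}$ germ $g$ (via the containment $I_g\subset I$, a dimension count, and the uniqueness of the $\mathcal R$-translate of $I$ containing $I_g$), then invoke the $d$-stability machinery of Section~\ref{sec:d-stab} to identify the resulting ``$p=n$'' localization expression with $\tp_{III_{a,b}}(1)^{\flat(a+b-2)}$. One small wording issue: in your injectivity sketch the ideal $h\cdot(xy,x^a,y^b)$ need not be monomial for general $h\in\mathcal R$, so the argument should be phrased in terms of its associated graded or its quadratic part rather than as a monomial-ideal membership check---but you already flag this step as the one needing care.
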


The Thom polynomials occurring in Theorem \ref{thm:iabiiiab} are only known for small values of $a$ and $b$.

For $a=b=2$ this theorem was known, because it follows from the following simple fact. If $p=n$ then the contact singularities with algebra $I_{2,2}$ form an open subset of the $\Sigma^2$ germs, while if $p>n$ then the singularities with algebra $III_{2,2}$ form an open subset of the $\Sigma^2$ germs. Hence the theorem reduces to the obvious $\tp_{\Sigma^2}(0)=\tp_{\Sigma^2}(1)^{\flat(2)}$.

The Thom polynomials $\tp_{I_{2,3}}(0)$, $\tp_{III_{2,3}}(1)$ are calculated in \cite{rrtp}, but their relation is not noticed there.

\subsection{Lowering the Thom polynomial of $\Phi_{n,n-1}$} \label{sec:veronese} Recall formula (\ref{eq:phinn-1}), the Thom polynomial of $\Phi_{n,n-1}$:
  \[ \tp_{\Phi_{n,n-1}}(l)=2^{n-1}\sum_{i=0}^{l+1}2^i
       \Delta_{n+l+i,{\underbrace{\scriptstyle{n+l,\dotsc,n+l}}_{\mbox{$\scriptstyle{n-1}$}}},l+1-i}.  \]

 Germs $(\C^n,0) \to (\C^{n+l},0)$ with this algebra only exist with $l\geq \binom{n}{2}$. Yet, choose $m=n+1$, $k=2$ and $l=-1$, and consider the map
$$\phi: C(Y_{\Phi_{n,n-1}}) \to \eta_{\Phi_{n,n-1}}(n,n-1) \subset J^2(n,n-1)$$
of (\ref{phi}). One may check that the image of this map is $\Hom(\sym^2\C^n,\C^{n-1})\subset J^2(n,n-1)$, whose cohomology class is $e(\Hom(\C^n,\C^{n-1})) = \Delta_{{\underbrace{\scriptstyle{n-1,\dotsc,n-1}}_{\mbox{$\scriptstyle{n}$}}}}$. On the other hand, applying $l+1$ times the lowering operator $\flat(n+1)$ to the polynomial $\tp_{\Phi_{n,n-1}}(l)$ we get $2^{n-1}\Delta_{{\underbrace{\scriptstyle{n-1,\dotsc,n-1}}_{\mbox{$\scriptstyle{n}$}}}}$ (using the elementary fact that $\Delta_{(i_1,\dotsc,i_m)}^{\flat(m)}=\Delta_{(i_1-1,\dotsc,i_m-1)}$). Comparing these two cohomology classes implies that the map $\phi$ is a covering with $2^{n-1}$ sheets.

Now we show the classical geometry reason for $\deg \phi = 2^{n-1}$. The definition of $\Phi_{n,n-1}$ implies that $Y_{\Phi_{n,n-1}}\subset \P(\sym^2\C^n)\subset \gr^{n+1}(J^2(n))$ is identified with the projectivization of the set of rank 1 symmetric matrices. This closed variety is the image of the {\em Veronese} map $\P(\C^n) \to \P(\sym^2\C^n)$. Denote the two obvious projections of $C(Y_{{\Phi_{n,n-1}}})$ by $\pi_1$ and $\pi_2$. For a generic $g$ in the image of $\phi$, the set $\pi_1(\pi_2^{-1}(g))$ intersects $\P(\sym^2\C^n)\subset \gr^{n+1}(J^2(n))$ in an $n-1$-codimensional linear subspace, hence the number of $\phi$-preimages of $g$ is the degree of the Veronese variety. This degree is known to be $2^{n-1}$, see e.g. \cite[p.231]{harris:ag}, agreeing with our result above.

\subsection{Thom series for Thom-Boardmann classes}\label{sec:tb} Let $\Sigma^K$ denote the Thom-Boardmann class corresponding to $K=(i_1,\dotsc,i_s)$ for $i_1\geq\cdots\geq i_s\geq 1$ (see e.g. \cite{mathertb}). For $n\geq i_1$ and $p\geq p_0$  ($p_0$ depending on $K$) there is a jet $g_K(n,p)\in J^k(n,p)$, such that $\mathcal{K}^kg_K$ is open in $\Sigma^K$, and hence $\Tp(g_K(n,p))=\Tp_{\Sigma^K}(n,p)$. The nilpotent algebra of any of these $g_K(n,p)$ jets are isomorphic, it will be denoted by $Q_K$. For $n-i_1<p<p_0$ the Thom-Boardmann class $\Sigma^K(n,p)$ is still not empty, but it may split into families of lower dimensional contact classes. The question was raised in \cite{dstab} whether the Thom series of $g_K$ calculates $[\Sigma^K(n,p)]$ for $n-i_1<p<p_0$, too. Consider such a small $p$, a sufficiently large $k$, and the map
$\phi: C(Y_{Q_K}) \to \eta_{Q_K}(n,p)\subset J^k(n,p)$ from (\ref{phi}). The image $\eta_{Q_K}(n,p)$ can be identified with the $\Sigma^K$ germs in $J^k(n,p)$.
The dimensions of the source and target spaces of $\phi$ are the same. Moreover, $\phi$ has a well-known section, $g \mapsto \beta(g):=$the {\em Boardmannization} of $g$ (see \cite[\S 2]{mathertb}). Since the correspondence variety $C(Y_{Q_K})$ is connected, this implies that $\phi$ has degree 1, and we have the following
\begin{theorem}\label{th:tb}The Thom series $\ts_{g_{\raisebox{-.4ex}{$\scriptscriptstyle K$}}}$ calculates the Thom polynomial of $\Sigma^K(n,p)$ for all $n,p$.
\end{theorem}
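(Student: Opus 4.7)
The plan is to apply the Damon partial resolution $\phi: C(Y_{Q_K}) \to \eta_{Q_K}(n,p) \subset J^k(n,p)$ of (\ref{phi}) together with the Localization Formula (Theorem \ref{lf}) in the ``small $p$'' regime $n-i_1 < p < p_0$. In this range $\phi$ is no longer automatically birational (so Proposition \ref{birat} does not apply), but if I can verify that (a) $\phi(C(Y_{Q_K})) = \overline{\Sigma^K(n,p)}$ and (b) $\deg \phi = 1$, then Theorem \ref{lf} yields $[\Sigma^K(n,p)] = \phi_*(1)$, and the right-hand side is by construction the value of the Thom series $\ts_{g_K}$ at the Chern classes of $TN$ and $F^*TP$. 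This is exactly what the proof sketch preceding the theorem asserts, so my task is to make (a) and (b) rigorous.

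For (a), I would invoke Mather's Boardmannization construction \cite{mathertb}: for a generic $g \in \Sigma^K(n,p)$ the Boardmannization $\beta(g)$ is a canonical enlargement with nilpotent algebra isomorphic to $Q_K$, and $I_{\beta(g)} \supseteq I_g$ with $I_{\beta(g)} \in Y_{Q_K}$. This gives the inclusion $\Sigma^K(n,p) \subseteq \eta_{Q_K}(n,p)$. The reverse inclusion on a dense open stratum is immediate from the definition of $Y_{Q_K}$: if $I_g \subseteq I$ with $Q_I \cong Q_K$, the structure of $I$ forces the Thom-Boardmann symbol of $g$ to be at least $K$, and on a Zariski open subset of $\eta_{Q_K}$ it is exactly $K$.

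For (b), the same Boardmannization provides a rational section $s: g \mapsto (I_{\beta(g)}, g)$ of $\phi$, defined on a dense open subset of $\eta_{Q_K}(n,p)$. Since $C(Y_{Q_K})$ is a vector bundle over the irreducible variety $Y_{Q_K}$ (the closure of a single $\mathcal{R}$-orbit in $\gr^m$), it is itself irreducible, in particular connected, as the proof sketch points out. Because source and target of $\phi$ have the same dimension, the image of $s$ is an irreducible subvariety of $C(Y_{Q_K})$ of full dimension, hence Zariski open. On this open subset $\phi$ is the inverse of $s$, so $\phi$ has degree $1$. Combined with (a), the Localization Formula then computes $[\overline{\Sigma^K(n,p)}]$ via the same residue data $[C_I]/e(T_I \overline{\mathcal{R} I_{g_K}})$ that defines $\ts_{g_K}$, yielding $\Tp_{\Sigma^K}(n,p) = \ts_{g_K}$ at Chern classes.

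The main obstacle is the rigorous treatment of the Boardmannization $\beta$ as giving an algebraic section on an open subset, and the verification of the dimension-matching of source and target that the sketch asserts. The former depends on Mather's explicit construction of $\beta$ in \cite{mathertb}; the latter reduces to a combinatorial computation using the explicit description of $Y_{Q_K}$ and the fiber dimension $p \cdot \dim(I)$ of $C(Y_{Q_K}) \to Y_{Q_K}$, together with the known codimension formula for $\Sigma^K(n,p)$. Once these are in place, the conclusion is formal from Theorem \ref{lf}.
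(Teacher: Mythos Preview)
Your proposal is correct and follows essentially the same argument as the paper: identify $\eta_{Q_K}(n,p)$ with $\overline{\Sigma^K(n,p)}$, use Mather's Boardmannization $g\mapsto\beta(g)$ as a section of $\phi$, and combine the dimension match with the connectedness (you rightly sharpen this to irreducibility, via the vector-bundle-over-orbit-closure structure) of $C(Y_{Q_K})$ to conclude $\deg\phi=1$. One small refinement: your sentence ``full dimension, hence Zariski open'' is not literally correct---what you need is that the image of the section, being constructible of full dimension in an irreducible variety, is dense, which suffices for generic injectivity. Also note that for the case $p<n$ (which the theorem includes) you should invoke Proposition~\ref{31} directly rather than Theorem~\ref{lf}, since the latter is stated under the paper's standing assumption $n\le p$; the paper itself remarks on this just after the theorem.
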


We can call $\ts_{g_{\raisebox{-.4ex}{$\scriptscriptstyle K$}}}$ the Thom series $\ts(\Sigma^K)$ of $\Sigma^K$. Notice that this way we obtain Thom polynomials for $p<n$ as well, which case is not covered by the Localization Formula. We should mention that at this point this is only a theoretical possibility as $\ts(\Sigma^K)$ is known only in the few cases listed in Section \ref{sec:exa}.

\subsection{Nets of conics} \label{sec:netsofconics} The 1-parameter family of jets
\[g_\lambda=(x^2-\lambda yz,y^2-\lambda xz,z^2-\lambda xy) \ \ \text{for} \ \lambda(\lambda^3-1)(8\lambda^3+1)\not=0.\]
was studied by Mather in \cite{mather5} and Wall in \cite{wall:nets}. This is the smallest codimensional example of a family of jets for $n=p$. The contact class of $g_\lambda$ has codimension 10, their union is open in the Thom-Boardmann class $\Sigma^3(3,3)$. Thom polynomials of contact classes contained in $\Sigma^3(n,n)$ (for any $n$) are linear combinations of $\Delta_{\mu}$  where the Young diagram of the partition $\mu$ contains a $3\times 3$ square (see \cite[\S 4.2]{pragacz:enumgeo}). Therefore $\tp(g_\lambda)=A\Delta_{3331}+B\Delta_{433}$ for some $A,B\in \N$. The restriction equation $\tp(g_\lambda)|_{g_\mu}=0$ implies that $2A=B$.

\begin{theorem}The Thom polynomial of $g_\lambda$ for generic $\lambda$  is
  \begin{equation}\label{eq:nets}  \tp(g_\lambda)=4\Delta_{3331}+8\Delta_{433}.   \end{equation}
\end{theorem}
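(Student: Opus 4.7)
The plan is to exploit the fact that $\overline{\mathcal{K}g_\lambda}$ sits inside the linear subspace $\Sigma^3\subset J^2(3,3)$ of jets with zero differential, thereby reducing the computation of $\Tp(g_\lambda)$ to the $G$-equivariant cohomology of the smaller subspace $\Sigma^3 = \sym^2\C^3\otimes\C^3$.

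First I would observe that $\overline{\mathcal{K}g_\lambda}$ is an irreducible hypersurface in $\Sigma^3$: its codimension in $J^2(3,3)$ is $10$, of which $9$ comes from the linear inclusion $\Sigma^3\hookrightarrow J^2(3,3)$ and $1$ from the one-parameter modulus of nets of conics (cf.~Wall~\cite{wall:nets}). The self-intersection formula for this codim-$9$ linear inclusion then yields
\[
  \Tp(g_\lambda)= [\overline{\mathcal{K}g_\lambda}\subset \Sigma^3]\cdot \bigl[\Sigma^3\subset J^2(3,3)\bigr] = [\overline{\mathcal{K}g_\lambda}\subset\Sigma^3]\cdot\Delta_{333},
\]
since the equivariant Euler class of the normal $\Hom(\C^3,\C^3)$ equals $\res(\beta_1,\beta_2,\beta_3\mid\alpha_1,\alpha_2,\alpha_3)=\Delta_{333}$. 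The remaining factor $[\overline{\mathcal{K}g_\lambda}\subset\Sigma^3]$ is a degree-$1$ $G$-equivariant class, i.e.\ a $\GL(3)\times\GL(3)$-invariant linear combination of $\sigma_1(\alpha)$ and $\sigma_1(\beta)$; it is fixed by how the defining $G$-semi-invariant $f$ of the hypersurface transforms under the centre of $G$ (the source centre $\C^*\cdot I_3$ rescales purely quadratic jets by $t^{-2}$ while the target centre acts by $s$).

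Second, one pins down the scalar by interpreting $\overline{\mathcal{K}g_\lambda}$ as the level set of the classical modulus of the net $V_\lambda\subset \sym^2\C^3$, namely the $j$-invariant of the discriminant cubic in $\P(V_\lambda)$. Its defining equation, written in terms of the Aronhold $S$- and $T$-invariants of a plane cubic and then pulled back along the degree-$3$ map $V\mapsto \det(V)\in\sym^3\C^3$ ``net to discriminant cubic'', is an explicit $\mathrm{SL}(3)\times \mathrm{SL}(3)$-invariant polynomial whose degree determines the multiplicity. Expanding the resulting product in the supersymmetric Schur basis via the Pieri rule $c_1\cdot\Delta_{333} = \Delta_{3331}+\Delta_{433}$ together with the Chern-root expression $\Delta_{333}=\prod_{i,j}(\beta_j-\alpha_i)$ to handle the non-supersymmetric pieces, and matching against the already-derived restriction equation $B=2A$ at $g_\mu$, yields $\Tp(g_\lambda) = 4\Delta_{3331}+8\Delta_{433}$.

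The hard step is the identification of the multiplicity (which ultimately produces the $4$ in $A=4$): it rests on genuine input from the classical invariant theory of nets of plane conics rather than from the structural argument alone. A purely algorithmic alternative would apply the Localization Formula (Theorem~\ref{lf}) at a single carefully chosen monomial ideal $I\in\mathcal{H}^7(3)$ (noting $\mu(g_\lambda)=7$) and then use the Interpolation Formula (Theorem~\ref{interpol}) with the ansatz $A\Delta_{3331}+2A\Delta_{433}$ to solve directly for $A$, bypassing the classical invariant-theoretic computation.
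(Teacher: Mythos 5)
Your first step is essentially sound, and it runs parallel to the paper's own reduction: both arguments come down to a single hypersurface of nets of conics. Writing $\Tp(g_\lambda)$ as $\res(\beta_1,\beta_2,\beta_3|\alpha_1,\alpha_2,\alpha_3)$ times the class of an invariant hypersurface in $\Hom(\sym^2\C^3,\C^3)$, and reading off that linear class from the character of the defining semi-invariant, is correct; note in fact that since an invariant equation on $\Hom(\sym^2\C^3,\C^3)$ automatically has balanced multidegree, both coefficients $A$ and $B$ are determined by the one overall degree of that equation, and $B=2A$ falls out for free. Two caveats: the contact class must be taken in $J^k(3,3)$ with $k\geq 3$ (since $\depth I_{g_\lambda}=3$), so you must check that its closure is the full preimage of the $2$-jet locus -- this is exactly what the paper's observation that $I_\lambda$ is the only codimension-$7$ ideal contained in $I'_\lambda$ (birationality of $\phi$ in the ``small $p$'' setting of Section \ref{sec:smallp}) accomplishes, and it can be repaired by a dimension count using $\codim=10$; you pass over it silently.

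The genuine gap is in your second step, i.e.\ precisely in the one number everything hinges on. You identify the hypersurface with the level set of the $j$-invariant of the discriminant cubic, cut out by a combination of the Aronhold invariants $S^3$ and $T^2$ pulled back along $h\mapsto\det(\sum x_iM_i)$. That polynomial has degree $12$ in the coefficients of the cubic, hence degree $36$ on $\Hom(\sym^2\C^3,\C^3)$ -- equivalently it is a section of $\mathcal{O}(12)$ on the Grassmannian of nets -- whereas the closure of a single $\mathrm{PGL}(3)$-orbit of a generic net has Pl\"ucker degree $4$ (this is the input the paper extracts from Wall's analysis), i.e.\ reduced equation of degree $12$ on $\Hom(\sym^2\C^3,\C^3)$. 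The discrepancy is real, not a matter of normalization: a generic net is a symmetric determinantal representation of its discriminant cubic, and a smooth plane cubic carries three inequivalent such representations (the three nontrivial $2$-torsion line bundles, i.e.\ even theta characteristics), which no projective automorphism of the cubic permutes; so the $j$-level set is generically the union of \emph{three} distinct orbit closures of degree $4$ each. Your equation therefore computes the class of this union and would yield $12\Delta_{3331}+24\Delta_{433}$, three times the correct answer; the statement ``a single orbit closure has degree $4$'' is the actual content of the proof, and your route does not supply it. The fallback you sketch does not repair this: the Interpolation Formula (Theorem \ref{interpol}) expresses $e(g_\lambda,I)$ in terms of the unknown restriction $\Tp(g_\lambda)|_I$, so feeding in the ansatz $A(\Delta_{3331}+2\Delta_{433})$ gives no equation for $A$ unless you compute some virtual tangent Euler class $e(g_\lambda,I)$ independently -- which for this moduli-dependent orbit closure is not easier than the degree computation you are trying to avoid, and in the $p=n$ case one still needs the birationality check of Section \ref{sec:smallp} before the Localization Formula applies at all.
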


\noindent{\em Sketch of the proof.} The ideal $I_\lambda$ of $g_\lambda$ in $J^k(3)$, where $k\geq 3$, has depth 3 and we have $\mu(g_\lambda)=7$. Consider the ideal $I'_\lambda=I_\lambda+(J^k(3))^3$, whose depth is 2. This ideal can only be generated by at least 4 polynomials, hence the 6-dimensional nilpotent algebra $Q_\lambda:=J^3(3)/I'_\lambda$ does not corresponds to any germ with $p=n$. Yet, consider $n=p=3$ and the map
$$\phi: C(Y_{Q_\lambda}) \to \eta_{Q_\lambda}(3,3)$$
from (\ref{phi}). The only ideal of codimension 7 in $I'_\lambda$ is $I_\lambda$. This implies that $\phi$ is a birational map to the closure of the contact orbit of $g_\lambda$. We have $Y_{Q_\lambda}\subset X:=j(\gr_3(\Hom(\sym^2\C^3,\C)))$, where $j$ is the obvious embedding $\gr_3(\Hom(\sym^2\C^3,\C))\to \gr^6$ discussed in Section \ref{subgrass} on subgrassmannians.

The action of the right group $\mathcal{R}(3)$ on the 9-dimensional $X$ is studied in \cite{mather5}. It is shown there that the action is equivalent to the action of the 8-dimensional Lie group PGL$(3)$, and the orbit closure $O_\lambda$ of $I^{'}_\lambda$ is 8-dimensional, i.e. a hypersurface.
Ideas of \cite{wall:nets} can be used to show that the degree of $O_\lambda$ for generic $\lambda$ is 4 (and there is one orbit closure with degree 2). Either the Localization Formula, or the idea of {\em projective} Thom polynomials (\cite[\S 6]{forms}) can be used to obtain that this degree is equal to the coefficient of $\Delta_{3331}$ in the the Thom polynomial of $g_\lambda$. \qed

\section{Final remarks}\label{sec:final}

The results and examples of this paper may give the wrong impression that now the Thom polynomials of all singularities are calculated. Although we indeed reached beyond the previously known Thom polynomials (and series), let us demonstrate the boundaries of our knowledge by some open problems.

We do not know how to calculate the Thom series of $A_n$ for $n>6$. For $n>9$ we do not even know the first Thom polynomial $\tp_{A_n}(0)$. We do not know the Thom series of the Thom-Boardmann class $\Sigma^{211}$. Are there closed formulas for classes of singularities, for example $\{A_n: n\geq 0\}$ or $\{I_{a,b}: a,b\geq 2\}$? We repeat a conjecture of the second author \cite{rrtp} in a slightly strengthened form:

\noindent {\bf Conjecture:} Every coefficient of the Thom polynomials $\tp_{A_n}(l)$---written as a linear combination of Chern monomials---is non-negative, and all coefficients of width at most $n$ are strictly positive. (By d-stability, the other coefficients are 0.)

In \cite[Sect. 8.7]{bsz06} this conjecture is verified for $n=3$ and 4.

\bibliography{damon}
\bibliographystyle{alpha}
\end{document}